\documentclass[10pt,fleqn]{article}
\usepackage[paperwidth=192mm,paperheight=262mm,left=13.5mm,right=13.5mm,
 top=19mm,bottom=18mm,headheight=12pt,headsep=7pt,footskip=18pt]{geometry}
\usepackage[T1]{fontenc}
\usepackage{amsmath,amssymb,amsfonts,mathtools,amsthm}
\usepackage{txfonts}
\usepackage{booktabs,array,enumitem,microtype,needspace,float,mathrsfs}
\usepackage[numbers,sort&compress]{natbib}
\usepackage{etoolbox,xcolor,url,fancyhdr}
\usepackage[hyperfootnotes=false,hypertexnames=false,hidelinks]{hyperref}
\hypersetup{bookmarksnumbered=true,bookmarksdepth=2,
 pdfauthor={Yangcheng Li},
 pdftitle={Reconstruction of Binary Linear Systems and Profile Geometry of Sparse Krylov Strata}}
\numberwithin{equation}{section}
\AtBeginEnvironment{theorem}{\Needspace{6\baselineskip}}
\AtBeginEnvironment{proposition}{\Needspace{5\baselineskip}}
\AtBeginEnvironment{corollary}{\Needspace{5\baselineskip}}
\AtBeginEnvironment{lemma}{\Needspace{5\baselineskip}}
\AtBeginEnvironment{example}{\Needspace{5\baselineskip}}
\newtheorem{theorem}{Theorem}[section]
\newtheorem{proposition}[theorem]{Proposition}
\newtheorem{lemma}[theorem]{Lemma}
\newtheorem{corollary}[theorem]{Corollary}

\newtheorem{remark}[theorem]{Remark}
\newtheorem{example}[theorem]{Example}

\newcommand{\A}{\mathbb A}
\newcommand{\PP}{\mathbb P}
\newcommand{\Gm}{\mathbb G_{\mathrm m}}
\newcommand{\cF}{\mathcal F}
\newcommand{\cE}{\mathcal E}
\newcommand{\Gr}{\operatorname{Gr}}

\newcommand{\rank}{\operatorname{rank}}
\newcommand{\Hilb}{\operatorname{Hilb}}
\newcommand{\rwt}{\operatorname{rwt}}

\newcommand{\sym}{\operatorname{Sym}}
\newcommand{\Fact}{\operatorname{Fact}}
\newcommand{\Disc}{\operatorname{Disc}}
\newcommand{\Res}{\operatorname{Res}}

\newcommand{\Irr}{\operatorname{Irr}}

\begin{document}
\title{Reconstruction of Binary Linear Systems and Profile Geometry of Sparse Krylov Strata}
\author{Yangcheng Li\thanks{Corresponding author. Email: \texttt{liyc@m.scnu.edu.cn}.}\\[3pt]
\small School of Mathematical Sciences, South China Normal University\\
\small Guangzhou 510631, Guangdong, China}
\date{}
\maketitle
\begin{abstract}
We study reconstruction and profile geometry for divisor schemes of binary
linear systems and their sparse Krylov charts.  Over the integers, the first
nonzero equations of the complete embedded divisor scheme recover the
defining linear system functorially under arbitrary base change, yielding a
closed immersion from the Grassmannian of linear systems to the corresponding
Hilbert scheme.  For monomial systems in characteristic zero, arithmetic
profiles classify the reduced factorization branches, determine their image
dimensions and generic multiplicities, and control geometric reducedness.
For complete progressions, the normalizations of the relation branches and
their images are explicit products of projective spaces equipped with two
natural polarizations.  We give an affine normality criterion in terms of the
associated Fourier data and a projective criterion obtained by adjoining an
endpoint-allocation condition.  These results place the closed sparse
Krylov rank loci in a uniform reconstruction--normalization framework and
yield explicit mixed-degree and formal-profile consequences.  They also
clarify the limit of normalization data alone: recovering a possibly
nonnormal image algebra requires additional information not addressed here.
\end{abstract}
\noindent\textit{Keywords:} binary linear systems; Hilbert schemes; determinantal schemes;
Krylov strata; reconstruction; arithmetic profiles; normalization; sparse factorization.

\noindent\textit{2020 MSC:} 14M12, 13C40, 14B05, 12F10.

\section{Introduction and main results}\label{sec:introduction}

This paper studies the reconstruction and profile geometry of sparse divisor
loci arising from binary linear systems.  The basic affine chart is the
remainder--Krylov rank locus attached to a monomial relation space: if
\(g\) is monic of degree \(r\), \(T_g\) is multiplication by \(x\) on
\(K[x]/(g)\), and \(Q_i\) is the coefficient vector of \(x^i\bmod g\), then
\[
 g\mid \sum_{i\in I}c_i x^i
 \quad\Longleftrightarrow\quad
 \sum_{i\in I}c_iQ_i=0.
\]
Thus the same relation can be viewed either as a coefficient-space rank
condition or as a factorization condition in a varying binary linear system.
Two related papers \cite{LiYuanOrbits,LiYuanSchur} use the same remainder
coordinates to study, respectively, the rational-normal-curve/GRS locus and
the MDS open locus.  The present paper addresses a different object: the
closed rank strata attached to arbitrary binary relation spaces.

The first question is reconstruction.  Let \(V\subset H^0(\PP^1,\mathcal
O(N))\) be a rank-\(s\) binary linear system and let \(X_r(V)\subset\PP^r\)
be the determinantal scheme of degree-\(r\) forms dividing a nonzero member
of \(V\).  Although every such system has the same determinantal postulation,
its first nonzero equation space remembers the system.  The integral
maximal-minor map is an isomorphism, and contraction recovers \(V\)
functorially after arbitrary base change.  In particular, the induced map
from the Grassmannian of linear systems to the Hilbert scheme of embedded
divisor schemes is a closed immersion.  This is a family-level statement: it
retains the fixed coefficient embedding, allows base points and fixed roots,
and does not reduce to generic pointwise identifiability.

The second question concerns the geometry of the sparse monomial charts.  In
characteristic zero, after removing the forced root at zero and the common
power in the exponent support, geometric monodromy is a wreath product.  Its
orbits on selected root subsets are encoded by arithmetic necklace profiles.
We show that these profiles classify the reduced incidence components,
determine which components dominate coefficient-space rank components, and
compute their generic scheme multiplicities.  Occupancy gives the generic
evaluation rank and hence the dimension of the projected image.  This yields,
in particular, an exact geometric reducedness criterion for the fixed-support
rank locus.

The third question is normalization.  For an arbitrary sparse support we
classify the normal relation covers by their wreath-product stabilizers.  For
complete progressions, both the relation-branch normalization and the image
normalization become explicit products of projective spaces.  The two
natural hyperplane classes pull back with coefficients determined by the
necklace weights and stabilizers.  These formulas give normality criteria for the affine monic chart and for
the projective image: the monic image is normal precisely when the weighted
generator degrees do not collide and the relevant Fourier coefficients are
nonzero, while normality of the projective image requires in addition
uniqueness of the endpoint allocation.  A prime-step corollary gives a particularly simple classification.

The normalization results mark a natural boundary of the present paper.
Once the normal profile image and its polarizations are known, a finer
question is to recover the actual, possibly nonnormal, image algebra from its
normalization.  Such a recovery requires information beyond normalization and
its natural polarizations and is not pursued here.  We keep the focus on
reconstruction, profile geometry, normalization, and the degree and
formal-local consequences that follow from them.

\subsection{Principal results}

The reconstruction theorem is obtained from binary multiplication and an
integral maximal-minor isomorphism.  If \(L=N-r+1\), the first nonzero
equations of \(X_r(V)\) occur in degree \(L\); after the maximal-minor
identification they are exactly \(\bigwedge^L V^\perp\).  Theorem
\ref{v6:lin:reconstruction} recovers \(V\) by contraction and proves that the
Hilbert morphism is a closed immersion.  The construction commutes with
arbitrary base change.

For a monomial support, Theorem \ref{thm:dominant-profile-components}
classifies the reduced factorization branches by admissible arithmetic
profiles.  It gives their reduced covering degrees, their scheme-theoretic
generic multiplicities in the full incidence, their image dimensions, and
the generic projective fibers of the forgetful map.  Corollary
\ref{cor:geometric-reducedness} converts these data into the reducedness
criterion.

The normalization theory begins with Proposition \ref{v6:norm:cover}.  Under
the complete-progression hypothesis, Theorem \ref{v6:norm:product} identifies
the two normalizations as products of projective spaces and computes the two
polarizations.  Theorems \ref{v6:norm:affine-criterion} and
\ref{v6:norm:projective-criterion} give the exact normality criteria for the affine monic chart and the projective
image.  The product formulas also underlie the mixed-degree
calculations and formal-profile models recorded in the appendices.

\subsection{Relation with precedents and scope}

The determinantal and secant-bundle foundations are classical, and the
multiplication-induced Hermite map and its reciprocity formalism provide a
neighboring viewpoint.  Three distinctions are important.  First, we do not
claim a new Hermite reciprocity isomorphism.  Second, in the hypersurface case
\(s=r\), the first-equation line is the classical
Pl\"ucker--Hermite/Poncelet construction; see, for example,
\cite[Section~3.2]{IlardiSupinoValles}.  Third, the common determinantal
postulation of the divisor schemes is itself standard.  What is proved here
for every \(s\le r\) is that the complete first nonzero equation bundle of
the embedded divisor scheme is the transported exterior-power subbundle,
that contraction recovers the defining linear system functorially after
arbitrary base change, and that the resulting Grassmannian-to-Hilbert
morphism is a closed immersion.  The sparse monodromy input is the
wreath-product theorem of Esterov--Lang \cite{EsterovLang}; the profile
results use it to track the selected factor, contractions under the image
projection, and scheme-theoretic generic multiplicities.

The product-normalization results should also be distinguished from the
finer problem of recovering the actual image algebra.  Knowing the
normalization, even together with the natural polarizations, does not by
itself determine the actual image.  In the present paper normalization is the
endpoint of the main argument.

\subsection{Organization}

Section~\ref{sec:remainders-new} fixes remainder coordinates and the divisor
scheme.  Section~\ref{sec:reconstruction-new} proves integral reconstruction.
Section~\ref{sec:projective-incidence} develops the relation incidence and
arithmetic profiles.  Section~\ref{sec:normalization} classifies normal
relation covers and constructs the product normalizations.  Section
\ref{sec:normality-new} gives the normality criteria and the prime-step
consequence.  Section~\ref{sec:conclusion-new} concludes with further directions and
records the scope and limitations of the normalization results.  The
appendices collect the fixed-support, profile-degree, formal-local, and
selected auxiliary results used to support the statements developed here.
\section{Remainder coordinates and the divisor-scheme setup}\label{sec:remainders-new}\label{sec:preliminaries}

The remainder coordinates of \cite{LiYuanOrbits,LiYuanSchur} identify
polynomial relations with divisibility.  Binary multiplication then
extends this affine model to arbitrary linear systems.

\paragraph{Conventions.}
Throughout, $K$ is a field, $\overline K$ is an algebraic closure, and $1\le r<n$.
A geometric property means the corresponding property after base change to $\overline K$.
We write $X_{\rm red}$ for the reduced induced scheme and $|X|$ for its underlying
support.  General geometric points are taken in a nonempty open subset after geometric
base change.  The monodromy and profile results are in characteristic zero unless
explicitly stated otherwise.
According to context, $\Fact_r(F/R)$ denotes the factorization algebra or its associated
affine $R$-scheme.
We set $\binom uv=0$ for $u\ge0$ whenever $v<0$ or $v>u$.
The symbols $D_I$ and $D(\boldsymbol m)$ denote respectively a root-weight
sum and a profile orbit degree.  We reserve $\eta,\xi$ for the two incidence hyperplane classes, and a
generic point is denoted by $\zeta_Y$.

\subsection{Remainder columns and the universal coefficient scheme}

For $g=x^r+a_{r-1}x^{r-1}+\cdots+a_0\in K[x]$, write
\begin{equation}\label{eq:intro-H}
 x^i\bmod g=\sum_{b=0}^{r-1}q_{b,i}x^b,\qquad
 Q_i=(q_{0,i},\ldots,q_{r-1,i})^{\mathsf T},\qquad
 H_g^{\rm rem}=(Q_0\ Q_1\ \cdots\ Q_{n-1}).
\end{equation}
For any polynomial $f=\sum_i f_ix^i$, the vector $\sum_i f_iQ_i$ is its remainder
modulo $g$; it is zero exactly when $g\mid f$.  Equivalently, for a cyclic pair
$(T,z)$ with minimal polynomial $g$, one has $f(T)z=0$ if and only if $g\mid f$.

Let $T_g$ be multiplication by $x$ on $K[x]/(g)$ in the power basis
$1,x,\ldots,x^{r-1}$.  Then
\begin{equation}\label{eq:remainder-recurrence}
 Q_i=T_g^iQ_0,\qquad Q_0=e_0,\qquad Q_i=e_i\ (0\le i<r),
\end{equation}
and
\begin{equation}\label{eq:Q-recurrence}
 Q_{i+r}=-a_{r-1}Q_{i+r-1}-\cdots-a_1Q_{i+1}-a_0Q_i.
\end{equation}

Simultaneous similarity and a change of cyclic vector preserve this relation
space \cite[Theorem 2.2]{LiYuanOrbits}.

Put
\begin{equation}\label{eq:intro-universal}
 R_r=\mathbb Z[A_0,\ldots,A_{r-1}],\qquad
 G=x^r+A_{r-1}x^{r-1}+\cdots+A_0.
\end{equation}
The symbols $q_{b,i}$ and $Q_i$ also denote the universal remainder coefficients and
columns.  For $I=\{i_1<\cdots<i_s\}\subseteq\{0,\ldots,n-1\}$, with $1\le s\le r$, set
\begin{equation}\label{eq:intro-block}
 H_I=(Q_{i_1}\ \cdots\ Q_{i_s}),\qquad
 J_I^{(s)}=I_s(H_I),\qquad X_I^{(s)}=V(J_I^{(s)})\subseteq\A_{\mathbb Z}^r.
\end{equation}
We use $I_j(M)$ for the ideal of $j$-minors of a matrix $M$, and put
$R_{r,K}=R_r\otimes_{\mathbb Z}K$ and $X_{I,K}^{(s)}=X_I^{(s)}\times_{\mathbb Z}K$.
For support calculations write
\begin{equation}\label{eq:intro-AUB}
 A=I\cap\{0,\ldots,r-1\},\qquad
 U=I\cap\{r,\ldots,n-1\},\qquad
 B=\{0,\ldots,r-1\}\setminus A.
\end{equation}
The high-column count is $u_I=|U|$; the notation $\ell(\boldsymbol m)$ is
reserved for necklace occupancy.  In Section~\ref{sec:monodromy},
$B^{\rm prim}$ denotes a primitive exponent support rather than the missing-low-row set $B$.

Appendix~\ref{v6:app:coordinates} gives alternative root-jet and Schur
coordinates; reconstruction does not depend on them.

Monic division identifies the kernel of the remainder map with
multiples of $G$.  The resulting exact sequence and complementary-minor
identities are recorded in Theorem~\ref{thm:Toeplitz-duality}.
The next section uses the corresponding multiplication presentation
for an arbitrary binary linear system.

\section{Integral reconstruction from the first equation bundle}\label{sec:reconstruction-new}
\label{sec:toeplitz-secant}\label{v6:lin:section}

For fixed $1\le s\le r\le N$, every rank-$s$ linear system of binary
forms has the same determinantal postulation.  Its first equations
nevertheless recover the linear system in families.  We establish the
common geometry directly from multiplication, then prove this recovery.

\subsection{Binary multiplication and the universal family}

\begin{lemma}[Universal binary multiplication]\label{lem:binary-multiplication}
For integers $0\le r\le N$, binary multiplication
\[
 \mu:\PP^r_{\mathbb Z}\times_{\mathbb Z}\PP^{N-r}_{\mathbb Z}
 \longrightarrow\PP^N_{\mathbb Z},\qquad([G],[H])\longmapsto[GH]
\]
is finite, faithfully flat and Gorenstein of rank $\binom Nr$.  Its relative
dualizing sheaf is
\begin{equation}\label{eq:binary-multiplication-dualizing}
 \omega_\mu\simeq
 \omega_{(\PP^r\times\PP^{N-r})/\mathbb Z}
 \otimes\mu^*\omega_{\PP^N/\mathbb Z}^{-1}
 \simeq\mathcal O(N-r,r).
\end{equation}
These assertions, including the displayed line bundle, commute with arbitrary base change.
\end{lemma}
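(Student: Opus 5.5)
The plan is to prove finiteness, flatness, the rank, and the Gorenstein property by standard arguments over \(\mathbb Z\), and then obtain the dualizing sheaf from the adjunction-type formula for a flat, finite, local complete intersection morphism between smooth schemes.

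\emph{Finiteness.} The pullback \(\mu^*\mathcal O_{\PP^N}(1)=\mathcal O(1,1)\) is ample on the source. Since \(\mu\) is a morphism of projective \(\mathbb Z\)-schemes, it is proper. It is also quasi-finite on every geometric fibre. Indeed, over an algebraically closed field, \(GH=F\) with \(F\ne0\) leaves only finitely many choices of the divisor of \(G\) among the divisors of \(F\), and \(H\) is then determined up to scalar. Hence \(\mu\) is proper and quasi-finite, so it is finite. Alternatively, finiteness follows directly because an ample line bundle pulls back to an ample one.

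\emph{Flatness, rank, and Gorenstein property.} Source and target are smooth over \(\mathbb Z\) of the same relative dimension \(N\). A finite morphism between regular schemes of the same dimension is flat by miracle flatness: equidimensional fibres with a Cohen–Macaulay source over a regular target suffice. To be careful over \(\mathbb Z\), I apply the fibrewise criterion of flatness. Both sides are flat over \(\mathbb Z\), and on each fibre over \(\operatorname{Spec}\mathbb F_p\) or \(\mathbb Q\) miracle flatness holds. Surjectivity then gives faithful flatness.

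The rank can be computed at a general geometric point: a square-free \(F\) of degree \(N\) has exactly \(\binom Nr\) degree-\(r\) factors up to scalar, each with reduced preimage. Alternatively, the rank is the degree \(\int (\eta+\xi)^N\) on \(\PP^r\times\PP^{N-r}\), which equals \(\binom Nr\).

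A flat finite morphism between smooth schemes is a local complete intersection, since a morphism between smooth schemes is lci. In particular it is Gorenstein, with \(\omega_\mu=\det(\text{cotangent complex})^{-1}=\omega_{\rm source}\otimes\mu^*\omega_{\rm target}^{-1}\).

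\emph{Dualizing sheaf.} We have \(\omega_{\PP^r\times\PP^{N-r}}=\mathcal O(-r-1,-N+r-1)\) and \(\mu^*\omega_{\PP^N}^{-1}=\mathcal O(N+1,N+1)\). The tensor product is therefore \(\mathcal O(N-r,r)\).

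\emph{Base change.} All of these constructions are compatible with base change. Formation of \(\mu\) commutes with base change, and so do finiteness, flatness, and rank. The lci/Gorenstein property of a flat morphism is fibrewise. The relative dualizing sheaf of a flat Gorenstein finite morphism is \(\mathcal{H}om(\mu_*\mathcal O,\mathcal O)\), which commutes with base change because \(\mu_*\mathcal O\) is locally free. The displayed isomorphism is an isomorphism of line bundles defined over \(\mathbb Z\), so it pulls back.

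The main obstacle is making the identification \(\omega_\mu\cong\omega_{\rm src}\otimes\mu^*\omega_{\rm tgt}^{-1}\) rigorous in the relative-over-\(\mathbb Z\) setting. I plan to use the standard fundamental local isomorphism for lci morphisms between \(\mathbb Z\)-smooth schemes, or duality for finite flat morphisms composed with smooth structure maps, \(\mu^!\omega_{\rm tgt}=\omega_{\rm src}\).
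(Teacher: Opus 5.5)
Your proposal is correct and follows essentially the paper's route. Proper plus quasi-finite gives finiteness. Flatness comes from miracle flatness: you pass through the fibrewise criterion over $\mathbb Z$, whereas the paper checks $\dim\mathcal O_z=\dim\mathcal O_y$ directly. The rank is read off above a squarefree form. Factoring through the graph makes $\mu$ lci, which gives the Gorenstein property and $\omega_\mu$. Base change follows from $\mathcal{H}om(\mu_*\mathcal O,\mathcal O)$ with $\mu_*\mathcal O$ locally free.

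Two small corrections. First, $\omega_\mu$ is $\det L_\mu$ itself, not its inverse; your final expression $\omega_{\mathrm{src}}\otimes\mu^*\omega_{\mathrm{tgt}}^{-1}$ is nevertheless the right one. Second, you assert without proof that the squarefree fibre is reduced. The paper checks this via the tangent map $(\dot G,\dot H)\mapsto H\dot G+G\dot H$. Your alternative computation $\int(H_G+H_H)^N=\binom Nr$ gives the rank without that check.
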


\begin{proof}
Over an algebraically closed field, a binary form has finitely many
degree-$r$ factor divisors, obtained by distributing its root
multiplicities, including infinity; at least one distribution exists.
Thus the projective morphism $\mu$ is quasi-finite and surjective,
hence finite and surjective.  Source and target are regular and smooth
over $\mathbb Z$ of relative dimension $N$.  For $z\mapsto y$, the
finite residue-field extension and the dimension formula give
$\dim\mathcal O_z=\dim\mathcal O_y$.  Miracle flatness therefore
gives flatness \cite[Tag~00R4]{Stacks}.

Above a squarefree form there are $\binom Nr$ factorizations.  They are
reduced: for coprime $G,H$, the inverse image of $K\cdot GH$ under
$(\dot G,\dot H)\mapsto H\dot G+G\dot H$ is spanned by $(G,0)$
and $(0,H)$, so the projective tangent map is an isomorphism.
Connectedness of the target now gives the stated finite-flat rank.

The graph of $\mu$ is a regular immersion, as a section of a smooth
projection.  Hence $\mu$ is a local-complete-intersection morphism
of virtual relative dimension zero.  Its dualizing sheaf is the
determinant of its cotangent complex.  Using
$\mu^*\mathcal O(1)=\mathcal O(1,1)$ gives
\eqref{eq:binary-multiplication-dualizing}, whose invertibility proves
the Gorenstein assertion.  For a finite projective algebra $B/R$,
\[
 \operatorname{Hom}_R(B,R)\otimes_R R'
 \simeq\operatorname{Hom}_{R'}(B\otimes_R R',R')
\]
compatibly with the algebra action.  This proves arbitrary base-change
compatibility of the dualizing module and its line-bundle description;
finite local freeness preserves the other assertions.  The endpoint
maps $r=0,N$ are identities.
\end{proof}

Put
\[
 S_n=H^0(\PP^1_{\mathbb Z},\mathcal O(n)),\qquad
 E=S_N^*,\qquad L=N-r+1,\qquad h=r-s+1.
\]
All bundle constructions below are over $\mathbb Z$ and commute with
base change.
Under $\operatorname{Sym}^r(\PP^1)\simeq\PP^r$, a binary form $G$ represents an effective
divisor $D_G$ of degree $r$.  Let $\cE_{N,r}$ be the rank-$r$ tautological secant bundle with
fiber
\begin{equation*}
 (\cE_{N,r})_{D}=H^0(D,\mathcal O_D(N)).
\end{equation*}
The universal divisor projects finitely and flatly to $\PP^r$: it is
$\PP^1\times\PP^{r-1}$ with the degree-$r$ multiplication map of
Lemma~\ref{lem:binary-multiplication}.  Its twisted ideal sequence remains
exact under base change.  Pushing forward, using
$H^1(\PP^1,\mathcal O(N-r))=0$, gives
\begin{equation}\label{eq:secant-bundle-resolution}
 0\longrightarrow
 H^0(\PP^1,\mathcal O(N-r))\otimes\mathcal O_{\PP^r}(-1)
 \longrightarrow
 H^0(\PP^1,\mathcal O(N))\otimes\mathcal O_{\PP^r}
 \longrightarrow\cE_{N,r}\longrightarrow0.
\end{equation}
This is the classical Schwarzenberger presentation of the secant
bundle \cite{Schwarzenberger}. For $N>r$, see also
\cite[Section~2.2, equation~(2.5a), arXiv version]{RaicuSam}.
Secant schemes of a linear series in this form,
including their diagonal local models, are classical \cite{HuibregtseJohnsen}.

For a scheme $T$ and a rank-$s$ subbundle
$V\subset S_N\otimes\mathcal O_T$ with locally free quotient, define
\begin{equation}\label{v6:lin:degeneracy}
 X_r(V)=D_{s-1}\bigl(V\otimes\mathcal O_{\PP(S_r)_T}
                    \longrightarrow\cE_{N,r,T}\bigr)
 \subset\PP(S_r)_T.
\end{equation}
Here $\cE_{N,r}$ is the secant bundle in
\eqref{eq:secant-bundle-resolution}, and projective space parametrizes
lines of binary forms.  No base-point-free hypothesis is imposed on $V$.
Projection after multiplication by the universal form gives
\begin{equation}\label{v6:lin:projected-multiplication}
 T_V(G):S_{N-r}\otimes\mathcal O(-1)
       \longrightarrow (S_N\otimes\mathcal O_T/V)\otimes\mathcal O.
\end{equation}
This matrix has $L+h-1$ rows and $L$ columns.  In general its rows are
linear combinations of the rows of the full multiplication matrix;
row deletion is the special case of a monomial linear system.

\begin{theorem}[Uniform family of linear-system secant schemes]
\label{v6:lin:uniform}
The maximal minors of \eqref{v6:lin:projected-multiplication} define
$X_r(V)$.  Over every field, their homogeneous ideal is saturated and
has height $h$.  Its quotient is Cohen--Macaulay of dimension $s$ and
has the resolution, Betti numbers and Hilbert series in
\eqref{eq:projective-EN}--\eqref{eq:projective-Hilbert-vector}.

Over $\operatorname{Gr}_{\mathbb Z}(s,S_N)$, the universal construction
is flat, and its homogeneous Eagon--Northcott resolution remains exact
after every base change.  In particular it defines a Hilbert morphism
\begin{equation}\label{v6:lin:hilbert-map}
 \Theta:\operatorname{Gr}_{\mathbb Z}(s,S_N)
 \longrightarrow\operatorname{Hilb}^{P}(\PP(S_r)/\mathbb Z)
\end{equation}
for the common Hilbert polynomial $P$.
\end{theorem}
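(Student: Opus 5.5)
The plan is to reduce everything to the generic-perfection property of Eagon--Northcott complexes, after checking the codimension at every geometric point. First, identify the degeneracy locus. Tensor the Schwarzenberger presentation \eqref{eq:secant-bundle-resolution} with $\mathcal O_{\PP(S_r)_T}$. The composite $V\otimes\mathcal O\to\cE_{N,r,T}$ has rank $\le s-1$ at a point if and only if the induced map
\[
S_{N-r}\otimes\mathcal O(-1)\oplus V\otimes\mathcal O\longrightarrow S_N\otimes\mathcal O
\]
has rank $\le L+s-1$. This is the snake-lemma comparison, using that $S_{N-r}\otimes\mathcal O(-1)\to S_N\otimes\mathcal O$ is a subbundle because \eqref{eq:secant-bundle-resolution} stays exact on fibers. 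Since $V$ is a subbundle with locally free quotient, the same comparison, taken modulo $V$, shows that this is equivalent to $T_V(G)$ having rank $\le L-1$. Both conditions are Fitting-ideal conditions, and the Fitting ideals agree: locally one chooses a basis of $S_N$ adapted to $V$ and applies the standard identity $I_{s}(\text{cokernel map})=I_{L}(T_V(G))$ for a block-triangular presentation, which holds up to units. Hence the maximal minors of $T_V(G)$, which is $(L+h-1)\times L$, define $X_r(V)$ scheme-theoretically.

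Second, establish the expected codimension. The expected codimension of the maximal-minor locus of an $(L+h-1)\times L$ matrix is $h$. I would check over an algebraically closed field that $\dim X_r(V)\le s-1$ in $\PP^r$, which gives codimension $\ge r-s+1=h$. The key input is Lemma~\ref{lem:binary-multiplication}: $X_r(V)$ is exactly the image under the first projection of $\mu^{-1}(\PP(V))\subset\PP^r\times\PP^{N-r}$. Since $\mu$ is finite and surjective, $\mu^{-1}(\PP(V))$ has dimension $s-1$, so its image has dimension $\le s-1$. It is nonempty, so equality holds. Once the codimension is exactly $h$, the Eagon--Northcott complex of $T_V(G)$ is a resolution, by the Eagon--Northcott/Buchsbaum--Rim perfection theorem. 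So the quotient is Cohen--Macaulay of dimension $(r+1)-h=s$ as a graded ring. The ideal is saturated because the quotient is CM of positive depth, given $s\ge1$. The Betti numbers and Hilbert series then depend only on $L,h,r$, which yields \eqref{eq:projective-EN}--\eqref{eq:projective-Hilbert-vector}.

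Third, pass to families. Over $\Gr_{\mathbb Z}(s,S_N)$ with universal $V$, the homogeneous Eagon--Northcott complex is a complex of graded free modules over $\mathcal O_{\Gr}[S_r^*]$, locally on the Grassmannian. At every geometric point of the Grassmannian it is exact in positive degrees, by the fiberwise argument above. The local flatness criterion then shows, degree by degree, that the complex is exact over the base, the cokernel is flat, and exactness persists after any base change. Equivalently, one can use the standard lemma that a complex of finite free modules which is a resolution on every fiber is a universally exact resolution with flat cokernel. The graded pieces of the quotient are therefore locally free of rank given by the common Hilbert function. So the universal $X_r(V)$ is flat with Hilbert polynomial $P$, and the representability of $\Hilb^P$ gives $\Theta$.

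The main obstacle I expect is the first step. One must get the scheme-theoretic identification of $D_{s-1}$ with the maximal-minor ideal integrally and with no base-point-freeness. Concretely, this means showing that the Fitting ideals of the two presentations coincide over an arbitrary base, not just their radicals. I would handle it by writing both as the $(L+s)$-minors of the single block matrix $(\,M_G\mid V\,)$ and then performing the column operations permitted by the local splitting of $V\subset S_N\otimes\mathcal O_T$.
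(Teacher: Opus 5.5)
Your proposal is correct and follows the paper's proof essentially step by step. You identify the two Fitting ideals through the common cokernel, bound $\dim X_r(V)\le s-1$ via the finite incidence $\mu^{-1}(\PP(V))$, apply Eagon--Northcott and positive depth, and pass from fiberwise to universal exactness degree by degree over the Grassmannian.

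Three small points. First, the step you flag as the main obstacle is immediate: both maps present the quotient of $S_N\otimes\mathcal O$ by $V$ and by the image of multiplication by $G$, and Fitting ideals do not depend on the presentation. Second, nonemptiness alone does not give equality of dimension; you need the determinantal height bound, as the paper uses. Third, the paper proves your ``standard lemma'' directly by cancelling split summands over an arbitrary local ring, and it uses the divided-power form of Eagon--Northcott so that the integral complex specializes to the field complexes in every characteristic.
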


\begin{proof}
Quotienting the middle term of \eqref{eq:secant-bundle-resolution} by
$V$ identifies the cokernels in \eqref{v6:lin:degeneracy} and
\eqref{v6:lin:projected-multiplication}; their $(r-s)$th Fitting ideals coincide.
Over an algebraically closed field the locus is the image of
$\{([G],[H]):[GH]\in\PP(V)\}$.
By Lemma~\ref{lem:binary-multiplication} this incidence is nonempty
and finite faithfully flat over $\PP(V)$, of dimension $s-1$.
Thus $\dim X_r(V)\le s-1$; the determinantal height bound gives the
opposite inequality, and hence height $h$.
The expected grade makes the homogeneous Eagon--Northcott complex
exact \cite[Chapter~2]{BrunsVetter}; see also \cite{EagonNorthcott}.
Its shifts are $L+i-1$ and its ranks are
$\binom{L+h-1}{L+i-1}\binom{L+i-2}{i-1}$, giving the asserted
resolution and Hilbert series.  The standard-graded calculation is
recorded in Appendix~\ref{r18:sec:standard-postulation}.
Since the quotient is Cohen--Macaulay of dimension $s\ge1$,
its zeroth local cohomology at the irrelevant ideal vanishes,
and the ideal is saturated.  These assertions
descend to every field.

For the universal family use divided powers
$\Gamma^j(M^*)=(\operatorname{Sym}^j M)^*$ in Eagon--Northcott,
as in \eqref{eq:EN-terms}.  Fix a polynomial degree.  The resulting
Eagon--Northcott complex is finite locally free, and its residue-field
complexes are exact in positive homological degrees.  Over a local
base ring the leftmost nonzero differential has a unit maximal column
minor, so it is a split injection.  The next differential vanishes on
its image.  Cancel this contractible summand and repeat
\cite[Tag~00MT]{Stacks}.  Only a free module in homological degree zero
remains.  These finite operations work over any local ring and, in
each fixed polynomial degree, extend to a neighbourhood.  Thus every
graded quotient is locally free and the complex stays exact after
every tensor product; no common splitting for all degrees is required.
Homogeneous localization and its degree-zero summand preserve
base-flatness, so Proj gives \eqref{v6:lin:hilbert-map}.
Since both the ambient structure sheaf and $\mathcal O_{X_r(V)}$ are
base-flat, the ideal sequence also remains exact after any base change.
It therefore recovers the actual pulled-back ideal sheaf.  Pullback
from the Grassmannian treats every $T$, including nonreduced and
non-Noetherian bases, without imposing regularity on $T$.
\end{proof}

\begin{remark}\label{v6:lin:chart-scope}
An arbitrary $V$ can have fixed roots, including a fixed root at
infinity.  A selected monic chart need not be dense.  For monomial systems
containing $X^N$, Theorem~\ref{thm:secant-compactification} proves
density separately by following every incidence component.
\end{remark}

\subsection{An integral isomorphism for maximal minors}

Use the standard monomial bases and write
$G=\sum_{k=0}^r A_kX^kZ^{r-k}$.  The full multiplication matrix is
\[
 M(G)=(A_{i-j})_{\substack{0\le i\le N\\0\le j<L}},
 \qquad A_k=0\quad\text{for }k\notin[0,r].
\]
Taking its maximal minors defines
\begin{equation}\label{v6:lin:psi}
 \Psi:\bigwedge^L E\longrightarrow
 H^0(\PP(S_r),\mathcal O(L)).
\end{equation}
Before choosing a basis, the natural map $\Psi_{\mathrm{nat}}$ has source
$\bigwedge^L E\otimes\det S_{N-r}$.  The increasing monomial basis
trivializes this determinant factor in \eqref{v6:lin:psi}.
With $(m,n)=(L,N+1)$, its dual is the multiplication-induced Hermite map of
\cite[Section~2.4, Theorem~2.9, arXiv version]{RaicuSam}, after matching
determinant generators: their decreasing generator differs from ours
by $(-1)^{\binom L2}$.  It sends a divided power of $G$ to the wedge of its $L$ monomial
multiples.  The proof fixes the integral normalization.

\begin{proposition}[Integral maximal-minor isomorphism]
\label{v6:lin:minor-isomorphism}
The map $\Psi$ is an isomorphism of free $\mathbb Z$-modules.
Consequently it is an isomorphism after base change to every ring.
\end{proposition}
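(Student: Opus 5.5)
The plan is to show that, in suitable bases, the matrix of $\Psi$ is unitriangular over $\mathbb Z$. Both sides are free $\mathbb Z$-modules of the same rank. The source $\bigwedge^L E$, with $E=S_N^*$ of rank $N+1$, has basis $e_S$ indexed by $L$-subsets $S=\{i_0<\cdots<i_{L-1}\}\subseteq\{0,\ldots,N\}$, so its rank is $\binom{N+1}{L}=\binom{N+1}{r}$. The target $H^0(\PP(S_r),\mathcal O(L))$ has the monomials of degree $L$ in $A_0,\ldots,A_r$ as a basis, so its rank is $\binom{L+r}{r}=\binom{N+1}{r}$. The element $\Psi(e_S)$ is the maximal minor of $M(G)$ on the rows $S$:
\[
 \Psi(e_S)=\sum_{\sigma\in\mathfrak S_L}\operatorname{sgn}(\sigma)\prod_{k=0}^{L-1}A_{i_k-\sigma(k)},
\]
where we use the convention $A_c=0$ for $c\notin[0,r]$. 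Once $\Psi$ is unitriangular, its determinant is $\pm1$. It is then an isomorphism over $\mathbb Z$, and hence after every base change.

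\emph{Step 1: the diagonal monomial and the bijection.} Attach to $S$ its diagonal monomial
\[
 m_S=\prod_k A_{i_k-k}.
\]
The sequence $c_k=i_k-k$ is weakly increasing, with $0\le c_0$ and $c_{L-1}\le N-L+1=r$. Conversely, every weakly increasing sequence in $[0,r]$ of length $L$ comes from a unique strictly increasing sequence $i_k=c_k+k$ in $[0,N]$. So $S\mapsto m_S$ is a bijection from $L$-subsets onto degree-$L$ monomials. Moreover $m_S$ is a genuine term of the minor, since all of its indices lie in $[0,r]$.

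\emph{Step 2: the diagonal term is the unique minimal term.} Give $A_c$ the weight $c^2$ and extend additively to monomials. A permutation $\sigma$ contributes, if nonzero, a monomial of weight
\[
 \sum_k(i_k-\sigma(k))^2=\sum_k i_k^2+\sum_k k^2-2\sum_k i_k\sigma(k).
\]
Both $(i_k)$ and $(k)$ are strictly increasing. By the strict rearrangement inequality, $\sum_k i_k\sigma(k)$ is uniquely maximized at $\sigma=\mathrm{id}$. Hence every $\sigma\neq\mathrm{id}$ produces a monomial of strictly larger weight than $m_S$. In particular no other permutation produces the monomial $m_S$, so $m_S$ occurs in the minor with coefficient exactly $1$, and we can write
\[
 \Psi(e_S)=m_S+(\text{monomials of weight}>\operatorname{wt}(m_S)).
\]

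\emph{Step 3: triangularity.} Order the monomial basis by weight, with ties broken arbitrarily, and order the $e_S$ correspondingly via the bijection of Step 1. By Step 2, the matrix of $\Psi$ is block upper-triangular in these orders. Each diagonal block, corresponding to monomials of a fixed weight, is an identity matrix, because no $\Psi(e_S)$ contains another monomial of its own weight. So the determinant is $1$, up to the sign of the chosen orderings. The basis normalizations mentioned before the proposition — the trivialization of $\det S_{N-r}$, and the sign $(-1)^{\binom L2}$ relative to \cite{RaicuSam} — change each basis vector only by a unit, so they do not affect the conclusion.

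The main obstacle is Step 2. It requires a term order in which the diagonal product is strictly the leading, or trailing, term of every maximal minor, and which is simultaneously compatible across all minors. The quadratic weight $c^2$ achieves both through the strict rearrangement inequality. This avoids having to find a lexicographic order adapted to these Toeplitz-type minors, which I would otherwise have attempted first.
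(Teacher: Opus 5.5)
Your proof is correct and follows essentially the same route as the paper. It uses the same diagonal-monomial bijection and the same quadratic weight argument via the strict rearrangement inequality; the only difference is that you weight $A_c$ by $c^2$ and take the unique minimal term, where the paper weights by $-k^2$ and takes the unique maximal term, before concluding unitriangularity in the same way.
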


\begin{proof}
For a row subset $0\le i_0<\cdots<i_{L-1}\le N$, the diagonal
term of the associated minor is
\begin{equation}\label{v6:lin:diagonal}
 \prod_{j=0}^{L-1}A_{i_j-j}.
\end{equation}
The sequence $(i_j-j)$ is weakly increasing and belongs to $[0,r]$:
the upper bound follows from
$i_j\le N-(L-1-j)=r+j$.
Conversely, each weakly increasing sequence of $L$ integers in $[0,r]$
defines one row subset.  The diagonal terms thus biject with all
degree-$L$ monomials in $A_0,\ldots,A_r$.

Give $A_k$ weight $-k^2$.  A nonzero term of the minor corresponding
to a column permutation $\sigma$ has weight
$-\sum_j(i_j-\sigma(j))^2$.  The terms
$\sum_j i_j^2$ and $\sum_j\sigma(j)^2$ do not depend on $\sigma$,
and the strict rearrangement inequality uniquely maximizes
$\sum_j i_j\sigma(j)$ at the identity permutation.  Thus
\eqref{v6:lin:diagonal} is the unique term of greatest weight, and its
coefficient is $+1$.  Order the monomial basis by weight, breaking
ties arbitrarily, and order the row subsets by their diagonal
monomials.  The resulting coefficient matrix is unitriangular.
Its determinant is $1$, which proves the integral assertion.
\end{proof}

\subsection{Recovery in families}

For a subbundle $V$ as above, let
$W=V^\perp\subset E\otimes\mathcal O_T$.  Its rank is
$N+1-s=L+h-1$.

\begin{theorem}[Reconstruction and Hilbert closed immersion]
\label{v6:lin:reconstruction}
Let $\rho:\PP(S_r)_T\to T$ be the projection.  There are no
nonzero homogeneous equations of $X_r(V)$ of degree less than $L$,
and its first equation bundle is
\begin{equation}\label{v6:lin:first-equations}
 \rho_*\mathcal I_{X_r(V)}(L)
   =\Psi\bigl(\bigwedge^L W\bigr)
   \subset H^0(\PP(S_r),\mathcal O(L))\otimes\mathcal O_T.
\end{equation}
This equality commutes with arbitrary base change.  The embedded
scheme $X_r(V)$ recovers $V$ functorially, and the Hilbert morphism
$\Theta$ in \eqref{v6:lin:hilbert-map} is a closed immersion.
\end{theorem}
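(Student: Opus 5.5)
The plan is to read off the first equations from the Eagon--Northcott resolution of Theorem~\ref{v6:lin:uniform}, identify them with $\Psi(\bigwedge^L W)$ via Proposition~\ref{v6:lin:minor-isomorphism}, and recover $V$ by contraction.

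\textbf{Degree $L$ equations.} The matrix $T_V(G)$ in \eqref{v6:lin:projected-multiplication} has linear entries, $L$ columns and $L+h-1$ rows. Its first Eagon--Northcott term is $\bigwedge^L(S_N/V)\otimes\mathcal O(-L)$, which is $\bigwedge^L W^*$ twisted after dualizing appropriately; all other terms sit in degrees $L+i-1$ with $i\ge2$. Hence the homogeneous ideal has no nonzero elements of degree $<L$. In degree $L$ it is the image of the maximal-minor map. Because the graded pieces of the Eagon--Northcott complex are locally free and exact after every base change (Theorem~\ref{v6:lin:uniform}), this image is a locally free subbundle whose formation commutes with base change. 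Saturation over fields, together with flatness of the ideal sequence, identifies this degree-$L$ piece with $\rho_*\mathcal I_{X_r(V)}(L)$ compatibly with base change. I would check the last point fibrewise: the homogeneous ideal is saturated in every fibre, and all graded pieces are base-flat.

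\textbf{Identification with $\Psi(\bigwedge^L W)$.} The rows of $T_V(G)$ are obtained from the rows of the full multiplication matrix $M(G)$ by composing with the projection $S_N\to S_N/V$. Dually, they are $M(G)$ paired with elements of $W=V^\perp\subset E$. By multilinearity of maximal minors (Cauchy--Binet), the maximal minor of $T_V(G)$ indexed by $w_1,\dots,w_L\in W$ equals $\Psi(w_1\wedge\cdots\wedge w_L)$. Thus the degree-$L$ equation module is exactly $\Psi(\bigwedge^L W)$. By Proposition~\ref{v6:lin:minor-isomorphism}, $\Psi$ is an isomorphism over every base. Hence $\bigwedge^L W\to\bigwedge^L E\otimes\mathcal O_T$ remains a subbundle with locally free quotient, which gives \eqref{v6:lin:first-equations}.

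\textbf{Recovery and closed immersion.} From the subbundle $\Lambda=\bigwedge^L W\subset\bigwedge^L E$ we recover $W$ by contraction. We set
\[
W=\{e\in E:\ e\wedge\lambda=0\ \text{for all }\lambda\in\Lambda\}
\]
(alternatively, take the image of the contractions $\bigwedge^L E^{*}\otimes\bigwedge^{L-1}\cdots$). This is the standard Plücker-type recovery of a decomposable subspace. It is functorial and compatible with base change, because locally $W$ is a direct summand: choosing a splitting $E=W\oplus C$, the kernel of $e\mapsto e\wedge(\text{generator})$ is computed universally. Then $V=W^\perp$.

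This shows that $\Theta$ is a monomorphism on $T$-points. In fact it gives more: a morphism $\operatorname{Hilb}\supset\Theta(\mathrm{Gr})\to\mathrm{Gr}$ that is a left inverse. Precisely, the locus in $\operatorname{Hilb}^P$ where $\rho_*\mathcal I(L)$ has the right rank and is a point of the Plücker-embedded Grassmannian of $\bigwedge^L E$ is locally closed, and the retraction is defined there. The map $\Theta$ is proper, since $\mathrm{Gr}$ is proper and $\operatorname{Hilb}$ is separated. A proper monomorphism is a closed immersion, so it suffices to verify that $\Theta$ is a monomorphism, i.e.\ injective on $T$-points for all $T$. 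That follows because the $T$-point $X_r(V)$ determines $\rho_*\mathcal I(L)$, which determines $W$ by contraction.

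\textbf{Main obstacle.} The delicate step is the identification of $\rho_*\mathcal I_{X_r(V)}(L)$ with the degree-$L$ part of the determinantal ideal over an arbitrary, possibly non-Noetherian, base $T$. Over such a base, saturation and pushforward need not obviously commute with base change. I would handle this by pulling back from the universal Grassmannian, using the degreewise local freeness established in Theorem~\ref{v6:lin:uniform} and cohomology-and-base-change for $\rho_*$ of the ideal sheaf twisted by $L$.
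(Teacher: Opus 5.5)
\textbf{Main gap: the recovery step.} Your first two steps follow the paper: Cauchy--Binet identifies the degree-$L$ minors with $\Psi(\bigwedge^L W)$, and Proposition~\ref{v6:lin:minor-isomorphism} makes this a subbundle. Your proper-monomorphism ending is also the paper's. The gap is in recovering $W$. Since $\rank W=N+1-s=L+(r-s)$, the module $\Lambda=\bigwedge^L W$ is a line only when $s=r$.

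For $s<r$ your set $\{e\in E: e\wedge\lambda=0\text{ for all }\lambda\in\Lambda\}$ is zero, not $W$. Over a field, a nonzero $e\in W$ extends to $L+1$ independent vectors $e,w_1,\dots,w_L$ of $W$, so $e\wedge w_1\wedge\cdots\wedge w_L\neq0$. Any $e\notin W$ is detected the same way. Consequently three parts of your argument only cover the hypersurface case $s=r$, which is the classical Pl\"ucker--Hermite situation:
\begin{itemize}
\item the ``Pl\"ucker-type recovery of a decomposable subspace'';
\item the base-change argument via $e\mapsto e\wedge(\text{generator})$;
\item the remark placing $\rho_*\mathcal I(L)$ at a point of the Pl\"ucker-embedded Grassmannian.
\end{itemize}
The theorem, however, is about every $s\le r$.

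What is needed is the contraction image that your parenthetical gestures at, $W=\operatorname{im}\bigl(\Lambda\otimes\bigwedge^{L-1}E^*\to E\otimes\mathcal O_T\bigr)$. You also need a proof that this image commutes with arbitrary base change, which images of module maps generally do not. The paper gets this from $\rank W\ge L$. Every contraction lies in $W$. After extending a local basis of $W$ to one of $E$, each basis vector of $W$ is, up to sign, the contraction of an $L$-fold wedge of basis vectors of $W$ against the duals of the other $L-1$ factors. This gives a local right inverse without any division, so the image is the direct summand $W$ universally. Dually, $V=\{\phi:\iota_\phi\Lambda=0\}$ also works, again with a splitting argument.

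\textbf{Smaller point: equations of degree below $L$.} The claim that there are none concerns $\rho_*\mathcal I_{X_r(V)}(n)$ over an arbitrary $T$. This could a priori exceed the degree-$n$ part of the ideal $\mathcal J\subset\mathcal O_T[A_0,\dots,A_r]$ generated by the minors; for $\mathcal J$ itself the claim is trivial. You identify $\rho_*\mathcal I(n)$ with $\mathcal J_n$ only for $n=L$, and ``check fibrewise'' is not by itself a mechanism over a nonreduced base.

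Your route can be completed in all degrees at once. Put $\mathcal R=\mathcal O_T[A_0,\dots,A_r]/\mathcal J$. Fibrewise saturation (Theorem~\ref{v6:lin:uniform}) says that $\mathcal R_n\to\mathcal R_{n+1}^{\oplus(r+1)}$, $f\mapsto(A_if)_i$, is injective on every residue fibre. Being a map of finite locally free modules, it is then locally split injective. Hence $\mathcal R$ has no torsion for the irrelevant ideal over any base, and $\rho_*\mathcal I(n)=\mathcal J_n$ for every $n$, compatibly with base change.

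The paper argues differently. It computes $R\rho_*\mathcal I(n)$ from the sheafified Eagon--Northcott resolution via a \v Cech double complex. At $n=L$ only $E_1^{0,0}=\bigwedge^LW\otimes\det D$ survives, and for $n<L$ every contribution sits in total degree at least $s\ge1$.
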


\begin{proof}
\emph{The first-equation subbundle.} Cauchy--Binet identifies the maximal minors of the projected
multiplication matrix with the images of $\bigwedge^L W$ under $\Psi$
over every ring.  Thus this identification specifies the map into
$H^0(\mathcal O(L))$, not just an abstract isomorphism of bundles.

Put $D=S_{N-r}\otimes\mathcal O_T$.  By Theorem~\ref{v6:lin:uniform},
the sheafified ideal resolution remains exact over $T$.  Placed in
cohomological degrees $-i$, its terms are
\[
 C^{-i}=\bigwedge^{L+i}W\otimes\Gamma^i D\otimes\det D
                \otimes\mathcal O(-L-i),\qquad 0\le i\le h-1.
\]
The fixed monomial basis trivializes $\det D$ as in
\eqref{v6:lin:psi}.  After twisting by $L$, the terms with $i>0$ are
acyclic under $\rho$, since $1\le i\le h-1=r-s\le r-1$.
Projective-space cohomology and its natural base-change maps hold over
every ring \cite[Tags~01XT, 01XV, 01XW]{Stacks}.
The bounded \v Cech double complex on the standard projective cover has
\[
 E_1^{-i,q}(n)=R^q\rho_*C^{-i}(n)
 \;\Longrightarrow\;R^{q-i}\rho_*\mathcal I_{X_r(V)}(n).
\]
At $n=L$, only $E_1^{0,0}$ survives.  Hence the derived direct image is
$\bigwedge^L W\otimes\det D$ in degree zero, and its map into
$\rho_*\mathcal O(L)$ is the Cauchy--Binet map.
Locally splitting $E\otimes\mathcal O_T=W\oplus W'$ makes
$\bigwedge^L W$ a direct summand of $\bigwedge^L(E\otimes\mathcal O_T)$;
Proposition~\ref{v6:lin:minor-isomorphism} transports this summand to
the first equations.  Thus \eqref{v6:lin:first-equations} is an equality
of subbundles with its natural base-change map, not merely a fiberwise
rank calculation.

\emph{No lower-degree equations.} For $0\le n<L$, every term $C^{-i}(n)$ has negative twist.  Its only
possible nonzero relative cohomology is in degree $r$, and contributes
in total degree $r-i\ge r-(h-1)=s\ge1$, never in degree zero.
Thus $\rho_*\mathcal I_{X_r(V)}(n)=0$, even when the higher cohomology
of some negative twists is nonzero.  For $n<0$, the same vanishing
follows from $\mathcal I(n)\subset\mathcal O(n)$.  In particular no
additional low-degree equations are hidden on a nonreduced base.

\emph{Recovery by contraction.} Applying $\Psi_{\mathrm{nat}}^{-1}$ to the
first equation bundle and then tensoring by $(\det D)^{-1}$ recovers
$U=\bigwedge^L W$ independently of a determinant trivialization.
In the fixed bases this is simply $\Psi^{-1}$.  Recover $W$ by contraction:
\begin{equation}\label{v6:lin:contraction}
 W=\operatorname{im}\left(
 U\otimes\bigwedge^{L-1}E^*
 \longrightarrow E\otimes\mathcal O_T\right).
\end{equation}
Every such contraction belongs to $W$.  Conversely, locally extend
a basis of the direct summand $W$ to a basis of $E\otimes\mathcal O_T$.
Any prescribed basis vector of $W$ occurs in an $L$-fold wedge of
basis vectors of $W$, because $\operatorname{rank}W\ge L$.
Contracting against the duals of the other $L-1$ factors recovers
that vector, up to a unit sign.  This proves
\eqref{v6:lin:contraction} over arbitrary rings, without division by
$L$ or a factorial.  Choosing one such preimage for each basis vector
gives a local right inverse.  Together with the locally split inclusion
$W\subset E\otimes\mathcal O_T$, this proves that the contraction
image commutes with arbitrary base change; this is false for arbitrary
module-map images.  Finally the locally split evaluation sequence
\[
 0\longrightarrow V\longrightarrow S_N\otimes\mathcal O_T
 \longrightarrow W^*\longrightarrow0
\]
recovers $V$ as the annihilator, compatibly with base change.
For $L=1$ the contraction is the inclusion of $U=W$, so this endpoint
requires no separate division argument.

\emph{The Hilbert morphism.} For every scheme $T$, two $T$-points defining the same embedded
Hilbert family therefore have the same $U$, $W$, and $V$.
These are equalities on $T$, not just on its geometric fibers.
Thus $\Theta$ is a monomorphism of functors on all test schemes.
Its graph is closed because the Hilbert scheme is
separated over $\mathbb Z$; the projection from the product is proper
because the Grassmannian is projective.  Thus $\Theta$ is proper.
A proper monomorphism is a closed immersion
\cite[Tag~04XV]{Stacks}.
\end{proof}

\begin{remark}[What is reconstructed]\label{v6:lin:meaning}
The theorem concerns the entire embedded determinantal scheme in the
fixed coefficient space $\PP(S_r)$, with $N,r,s$ fixed.  It does not
assert recovery from the abstract scheme, its reduced support, one
component, or its numerical invariants.  For a monomial $V_I$, the
fixed monomial coordinates recover $I$ from $V_I$.  If $r=s$, then $\rank W=L$, and the first equation is the
Pl\"ucker line transported by $\Psi$.  With parameters $k=r-1$ and
$n=N$, this is precisely the Pl\"ucker--Hermite/Schwarzenberger
construction of the Poncelet hypersurface in
\cite[Section~3.2]{IlardiSupinoValles}.  Hermite reciprocity in arbitrary
characteristic is treated in \cite{RaicuSam}; as explained before
Proposition~\ref{v6:lin:minor-isomorphism}, its multiplication-induced
map, with those determinant generators identified, is the dual of $\Psi$.
For general $s\le r$, the reconstruction statement concerns the complete
first-equation bundle and the whole embedded determinantal family after
arbitrary base change, together with the contraction back to the defining
linear system.
The determinant-line convention
in \eqref{v6:lin:psi} must be retained when discussing equivariance.
Theorems on powers of binary forms and their derived reciprocity
\cite{RaicuSamWeymanYang} concern a neighboring ideal problem, not the
whole family of divisor schemes used here.
\end{remark}

\begin{example}[Uniform postulation, different geometry]\label{ex:early-cubic}
Over a field of characteristic zero, take $N=4$ and $r=s=2$, with
$G=A_2X^2+A_1XY+A_0Y^2$ in $\PP^2$ and monic chart $A_2=1$.
The two supports give
\[
 I=\{0,4\}:\quad A_1(2A_0A_2-A_1^2)=0,
 \qquad I'=\{3,4\}:\quad A_0^3=0.
\]
Indeed, modulo $x^2+A_1x+A_0$, the coefficient of $x$ in $x^4$ is
$A_1(2A_0-A_1^2)$, whereas $\det(Q_3,Q_4)=A_0^3$.
Homogenization gives the displayed cubics. The first is a line and a smooth
conic; the second is a triple line. Both have resolution $0\to S(-3)\to S\to S/(f)\to0$,
Hilbert series $(1+t+t^2)/(1-t)^2$, degree $3$, regularity $2$,
and generic initial ideal $(x_0^3)$. On the monic chart, their root weights are respectively
$3$ and $6$. Uniform projective data therefore coexist with distinct
component structures, multiplicities, and weighted geometry.
\end{example}

\section{Relation incidence and arithmetic profiles}\label{sec:projective-incidence}
\label{sec:monodromy}

The relation projection is finite flat; the divisor projection can
contract components.  Arithmetic profiles distinguish their reduced
degrees, image dimensions, and scheme multiplicities.

\subsection{The finite relation projection}
\label{r19:sec:relation-projection}
Over a field $K$, retain $1\le s\le r\le N$, $|I|=s$, and $\max I=N$.
Let $\mu:\PP^r\times\PP^{N-r}\to\PP^N$ be binary-form multiplication and define
\begin{equation}\label{eq:projective-factorization-incidence}
 \overline{\mathfrak Z}_I
 =\mu^{-1}(\PP(V_I))
 =\{([G],[H]):GH\in\PP(V_I)\}.
\end{equation}
The two projections organizing the rest of the paper are
\begin{equation}\label{eq:incidence-two-projections}
 \overline X_I^{(s)}\xleftarrow{\ p\ }
 \overline{\mathfrak Z}_I\xrightarrow{\ q\ }\PP(V_I),\qquad
 p([G],[H])=[G],\quad q([G],[H])=[GH].
\end{equation}
Write $H_G,H_H$ for the factor hyperplane classes, and use throughout
$\eta=p^*c_1(\mathcal O_{\PP^r}(1))=H_G$ and
$\xi=q^*c_1(\mathcal O_{\PP(V_I)}(1))=H_G+H_H$,
also for their restrictions and pullbacks.

Write $Z_{I,K}=p^{-1}(\{A_r\ne0\})$ for the selected-monic chart
of $\overline{\mathfrak Z}_I$; its relation coordinate remains projective,
as in \eqref{eq:incidence}.  The common monic relation chart is
$\cF_{I,r}=\Fact_r(F_t)$, where $F_t=x^N+\sum_{i\in I\setminus\{N\}}t_ix^i$.
The relevant charts are collected in Table~\ref{tab:incidence-dictionary}.

\begin{table}[H]
\centering
\caption{Relation projections and image projections, with their chart conventions.}
\label{tab:incidence-dictionary}
\begin{tabular}{@{}p{.19\textwidth}p{.34\textwidth}p{.40\textwidth}@{}}
\toprule
Space & Projection & Property\\
\midrule
$\overline{\mathfrak Z}_I$ & $q:\overline{\mathfrak Z}_I\to\PP(V_I)$
 & Finite flat Gorenstein of rank $\binom Nr$.\\
$\overline{\mathfrak Z}_I$ & $p:\overline{\mathfrak Z}_I\to\overline X_I^{(s)}$
 & Projective; may contract profile components.\\
$Z_{I,K}$ & $\varpi:Z_{I,K}\to\PP(V_I)$
 & Monic selected divisor; quasi-finite, generally not proper.\\
$Z_{I,K}$ & $p:Z_{I,K}\to X_{I,K}^{(s)}$
 & Projective; fibers $\PP(\ker H_I(g))$.\\
$\cF_{I,r}$ & $\cF_{I,r}\to\A_K^{s-1}$
 & Monic relation; finite flat of rank $\binom Nr$.\\
\bottomrule
\end{tabular}
\end{table}

\begin{theorem}[Projective factorization complete intersection]
\label{thm:projective-factorization-CI}
The scheme $\overline{\mathfrak Z}_I$ is a complete intersection of $N-s+1$ divisors of
bidegree $(1,1)$ and has dimension $s-1$.  The map
$q:\overline{\mathfrak Z}_I\to\PP(V_I)$ is finite flat Gorenstein of degree
$\binom Nr$, and the projection to $\PP^r$ has scheme-theoretic image
$\overline X_I^{(s)}$ and is an isomorphism over an open subset
containing every generic point of the image.
In $A_{s-1}(\PP^r\times\PP^{N-r})$, its fundamental cycle is
\begin{equation}\label{eq:incidence-class}
 [\overline{\mathfrak Z}_I]=(H_G+H_H)^{N-s+1}.
\end{equation}
For $a+b=s-1$ its mixed degrees are
\begin{equation}\label{eq:mixed-degrees}
 \int_{\overline{\mathfrak Z}_I}H_G^aH_H^b
 =\binom{N-s+1}{r-a}.
\end{equation}
Moreover,
\begin{equation}\label{eq:incidence-canonical-class}
 \omega_{\overline{\mathfrak Z}_I}
 \simeq\mathcal O_{\overline{\mathfrak Z}_I}(N-r-s,r-s).
\end{equation}
\end{theorem}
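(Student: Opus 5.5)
Everything flows from Lemma~\ref{lem:binary-multiplication}: the incidence $\overline{\mathfrak Z}_I=\mu^{-1}(\PP(V_I))$ is a base change of $\mu$, so $q$ inherits the finite flat Gorenstein property of degree $\binom Nr$ for free.

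\textbf{Complete intersection.} $\PP(V_I)\subset\PP^N$ is the linear subspace cut out by the $N+1-s$ coordinates $c_i$, $i\notin I$. Each pulls back under $\mu$ to the coefficient of $X^iZ^{N-i}$ in $GH$, which is a form of bidegree $(1,1)$ since $\mu^*\mathcal O(1)=\mathcal O(1,1)$. Because $\mu$ is flat and finite, the pullback of a regular sequence defining $\PP(V_I)$ is again regular. Hence $\overline{\mathfrak Z}_I$ is a complete intersection of $N-s+1$ divisors of bidegree $(1,1)$, and it has dimension $N-(N-s+1)=s-1$.

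\textbf{Class, mixed degrees, canonical sheaf.} Regularity of the sequence gives $[\overline{\mathfrak Z}_I]=(H_G+H_H)^{N-s+1}$. Intersecting with $H_G^aH_H^b$, $a+b=s-1$, only the term $H_G^{r}H_H^{N-r}$ survives. Its coefficient is $\binom{N-s+1}{r-a}$, which gives \eqref{eq:mixed-degrees}. Adjunction gives
\[
\omega=\mathcal O(-r-1,-(N-r)-1)\otimes\mathcal O(N-s+1,N-s+1)=\mathcal O(N-r-s,r-s),
\]
restricted to the incidence. As a consistency check, the same sheaf comes from $\omega_\mu\otimes q^*\omega_{\PP(V_I)}=\mathcal O(N-r,r)\otimes\mathcal O(-s,-s)$.

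\textbf{The projection $p$.} By Theorem~\ref{v6:lin:uniform}, $X_r(V_I)=\overline X_I^{(s)}$ is the degeneracy locus of $V_I\otimes\mathcal O\to\cE_{N,r}$. The fiber of $p$ over $[G]$ is $\PP(\ker(V_I\to H^0(\mathcal O_{D_G}(N))))$, a linear space, and $\overline{\mathfrak Z}_I$ is the projectivized kernel of this map. Set-theoretically the image of $p$ is the rank $\le s-1$ locus. Over the open set $\{\rank = s-1\}$ the kernel is a line subbundle, so $p$ is an isomorphism there.

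\textbf{Main obstacle.} The hardest step is showing scheme-theoretically that this open set contains every generic point of $\overline X_I^{(s)}$, and that the scheme-theoretic image equals the determinantal scheme and not merely its reduction. My plan is as follows.
\begin{itemize}
\item The determinantal scheme is Cohen--Macaulay of the expected codimension $h$ (Theorem~\ref{v6:lin:uniform}), so it has no embedded points.
\item The locus where the rank is at most $s-2$ has dimension $<s-1$. This holds because its preimage has positive-dimensional fibers, while $\dim\overline{\mathfrak Z}_I=s-1$ bounds the total.
\item On the corank-one locus, a local computation identifies $p^{-1}$ with the scheme $D_{s-1}$. Locally the $s\times r$ matrix reduces to a column of entries, and the kernel-line incidence is cut out by the same maximal-minor ideal.
\end{itemize}
Combined with the absence of embedded components, the scheme-theoretic image of $p$ is then $\overline X_I^{(s)}$.
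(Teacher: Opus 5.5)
Your proposal is correct and follows the paper's proof essentially step for step. It uses flat pullback of the $N-s+1$ linear equations under $\mu$, coefficient extraction for the mixed degrees, and the fiber-dimension count showing that rank $\le s-2$ cannot occur at a generic point of the pure $(s-1)$-dimensional determinantal scheme. It then eliminates the relation coordinates where an $(s-1)$-minor is invertible and uses Cohen--Macaulayness to exclude embedded points. Your adjunction computation of $\omega$ is just the ambient form of the paper's $\omega_q\otimes q^*\omega_{\PP(V_I)}$.

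The one step you should state explicitly, as the paper does, is that $p$ factors scheme-theoretically through $\overline X_I^{(s)}$; you need this before comparing the two ideals. It holds because the nowhere-vanishing relation $GH$ spans a line subbundle of $V_I\otimes\mathcal O$ that evaluation kills, so all $s$-minors vanish on $\overline{\mathfrak Z}_I$.
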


\begin{proof}
The $N-s+1$ linear equations of $\PP(V_I)\subset\PP^N$ pull back under
$\mu$ to the coefficients outside $I$ of $GH$.  By
Lemma~\ref{lem:binary-multiplication}, this pullback is flat and $q$
is finite flat Gorenstein of rank $\binom Nr$.  The bilinear equations
therefore form a regular sequence, giving purity of dimension $s-1$
and \eqref{eq:incidence-class}.

The set-theoretic image under $p$ is the degeneracy locus
\eqref{eq:projective-secant-locus}.  Over a top-dimensional image
component the general relation fiber has dimension zero; otherwise
that image would have dimension at most $s-2$.  Hence the general
evaluation rank is $s-1$, the relation is unique, and so is $H=F/G$.
The projective-kernel equations make $p$ factor scheme-theoretically
through $\overline X_I^{(s)}$.  Wherever an $(s-1)$-minor is invertible,
eliminating the projective relation coordinates identifies $p$ with an
isomorphism, including nilpotents; for $s=1$ the $0$-minor is the unit.
These opens contain every generic point of $\overline X_I^{(s)}$.
By Theorem~\ref{v6:lin:uniform}, this scheme is Cohen--Macaulay with no
embedded associated points.  The ideal
$\ker(\mathcal O_{\overline X_I^{(s)}}\to
p_*\mathcal O_{\overline{\mathfrak Z}_I})$ vanishes at all generic
points and is therefore zero, proving the scheme-theoretic image assertion.

Extracting the coefficient of $H_G^rH_H^{N-r}$ gives
\eqref{eq:mixed-degrees}.  Finally
$\omega_q=\mathcal O(N-r,r)$ and
$q^*\omega_{\PP(V_I)}=\mathcal O(-s,-s)$ give
\eqref{eq:incidence-canonical-class}.
\end{proof}

\begin{remark}[The gcd restriction test]\label{prop:factorization-secant-smoothness}
At $z=([G],[H])\in\overline{\mathfrak Z}_I(K)$, put $Q=\gcd(G,H)$,
$d=\deg Q$, and write $D_Q$ for its divisor.  Set
\begin{equation}\label{eq:factorization-secant-restriction}
 \operatorname{rk}_I(Q)=\rank\bigl(V_I\longrightarrow
 H^0(D_Q,\mathcal O_{D_Q}(N))\bigr).
\end{equation}
With both quantities zero for $Q=1$, one has
\begin{equation}\label{eq:factorization-incidence-tangent-dimension}
 \dim_KT_z\overline{\mathfrak Z}_I=s-1+d-\operatorname{rk}_I(Q).
\end{equation}
Indeed, for $S_j=H^0(\PP^1,\mathcal O(j))$, the multiplication differential
has image $HS_r+GS_{N-r}=(Q)_N$: its summands intersect in
$(GH/Q)S_d$, so their sum has dimension $N-d+1$.
Its cokernel modulo $V_I$ has dimension $d-\operatorname{rk}_I(Q)$.
Subtracting the resulting Jacobian rank from the ambient dimension $N$
proves the formula.  The complete intersection is therefore smooth at $z$
exactly when the restriction is surjective; on a monic chart this is
\begin{equation}\label{eq:factorization-secant-rank}
 \rank(x^i\bmod Q)_{i\in I}=d.
\end{equation}
This is a test at a factorization point, not a criterion for smoothness of
its coefficient-space image.
\end{remark}

The resultant $\Res(G,H)$ is a section of
$\mathcal O(N-r,r)$ on the product.  Comparing \eqref{eq:incidence-canonical-class} with
$q^*\omega_{\PP(V_I)}\simeq\mathcal O(-s,-s)$ gives the line-bundle identity
\begin{equation}\label{eq:projective-RH-line-bundle}
 \omega_{\overline{\mathfrak Z}_I}
 \simeq q^*\omega_{\PP(V_I)}\otimes\mathcal O(N-r,r).
\end{equation}
The corresponding K\"ahler-different calculation is not needed for the reconstruction--profile arguments below and is not pursued here.

\subsection{The image projection and its generic relation fiber}
\label{r19:sec:image-projection}
Evaluation rank controls contraction under $p$ over every field.

\begin{lemma}[Rank and image of an incidence component]
\label{lem:profile-rank-image}
Let $K$ be a field, let $1\le s\le r\le N$ with $|I|=s$ and $\max I=N$,
and let $W$ be a reduced irreducible
component of $\overline{\mathfrak Z}_I$.  Put $Y=p(W)$ with its reduced
structure, write $\zeta_Y$ for its generic point, and let $k$ be the generic rank on $Y$ of the evaluation map
$V_I\otimes\mathcal O\to\cE_{N,r}$ (equivalently, of $H_I$ on the monic
chart).  Then
\begin{equation}\label{eq:profile-rank-image}
 \dim Y=k,\qquad
 W_{\zeta_Y}=\PP(\ker H_I(\zeta_Y))
 \simeq\PP^{s-k-1}_{K(Y)}.
\end{equation}
At nonmonic divisors, $H_I$ denotes secant evaluation.  On a nonempty
open $Y^\circ\subseteq Y$, with kernel bundle
$\mathcal E=\ker(V_I\otimes\mathcal O_{Y^\circ}\to
\cE_{N,r}|_{Y^\circ})$, one has
\[
 W|_{Y^\circ}=\PP_{Y^\circ}(\mathcal E),\qquad
 K(W)=K(Y)(u_1,\ldots,u_{s-k-1}).
\]
Distinct reduced incidence components have distinct images.
\end{lemma}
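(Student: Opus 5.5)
The plan is to analyze the fibers of $p$ over $\overline X_I^{(s)}$ through the evaluation map, and then use the fact that $q$ is finite to pin down dimensions.

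\emph{Fibers of $p$.} Over any point $[G]$, the fiber of $p$ is the projectivized kernel of the secant evaluation
\[
 V_I\to H^0(D_G,\mathcal O_{D_G}(N)).
\]
On the monic chart this is $\PP(\ker H_I(g))$. The reason is that $GH\in V_I$ exactly when a member $F\in V_I$ vanishes on $D_G$. In that case $H=F/G$ is determined by $F$, so $[H]$ is determined by $[F]$. This identification holds scheme-theoretically, since the projective-kernel equations cut out $\overline{\mathfrak Z}_I$ inside $\PP^r\times\PP(V_I)$ after eliminating $H$.

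\emph{The open set $Y^\circ$.} Let $k$ be the generic rank of the evaluation map on $Y$. I choose a nonempty open $Y^\circ\subseteq Y$ on which the map has constant rank $k$, so its kernel $\mathcal E$ is locally free of rank $s-k$ (shrinking $Y^\circ$ if needed). Then $p^{-1}(Y^\circ)_{\rm red}$ equals $\PP_{Y^\circ}(\mathcal E)$. This is irreducible because $Y^\circ$ is irreducible and reduced.

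\emph{Identifying $W$ with the projective bundle.} Since $W\subseteq p^{-1}(Y)$ dominates $Y$, $W|_{Y^\circ}$ is a closed irreducible subset of $\PP_{Y^\circ}(\mathcal E)$. I need to show it is all of it. The key input is that $q$ is finite: by Theorem~\ref{thm:projective-factorization-CI}, every component of $\overline{\mathfrak Z}_I$ has dimension exactly $s-1$. This gives $\dim W=s-1$. On the other hand, $\dim\PP_{Y^\circ}(\mathcal E)=\dim Y+s-k-1$.

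\emph{Proving $\dim Y=k$.} This is the main obstacle, and I would argue it in two directions.

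\begin{itemize}
\item For $\dim Y\ge k$: the set $q(p^{-1}(Y^\circ))\subseteq\PP(V_I)$ has dimension $\dim Y+s-k-1$, because $q$ is finite. Since this set lies in $\PP(V_I)$, we get $\dim Y+s-k-1\le s-1$, that is, $\dim Y\le k$. That is the wrong direction, so the lower bound must come from elsewhere.
\item For $\dim Y\le k$: this is the inequality just derived.
\item Lower bound via the component structure: $\PP_{Y^\circ}(\mathcal E)$ is an irreducible closed subset of $\overline{\mathfrak Z}_I$ containing $W|_{Y^\circ}$. Every component of the complete intersection is pure of dimension $s-1$, and $W$ is a maximal irreducible subset. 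Hence $W|_{Y^\circ}=\PP_{Y^\circ}(\mathcal E)$ and $\dim Y+s-k-1=s-1$, giving $\dim Y=k$.
\end{itemize}

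I would double-check the lower bound through the tangent-space description $HS_r+GS_{N-r}$ of Remark~\ref{prop:factorization-secant-smoothness}, or through a direct deformation count: near a generic point of $Y$, the divisors $G$ dividing members of $V_I$ have rank-$k$ conditions. As a fallback, the inequality $\dim Y\ge k$ should follow from upper semicontinuity together with the purity of dimension $s-1$ applied to $W$.

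\emph{Generic fiber and function field.} The generic fiber $W_{\zeta_Y}$ is then $\PP(\ker H_I(\zeta_Y))\simeq\PP^{s-k-1}_{K(Y)}$. Trivializing $\mathcal E$ locally gives $K(W)=K(Y)(u_1,\ldots,u_{s-k-1})$.

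\emph{Distinct components have distinct images.} If two components $W,W'$ had the same image $Y$, both would equal the closure of $\PP_{Y^\circ}(\mathcal E)$ over a common open set. Hence $W=W'$.
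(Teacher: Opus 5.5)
Your proof is correct and follows the paper's route: a constant-rank open $Y^\circ$, the kernel bundle $\PP_{Y^\circ}(\mathcal E)$ identified with the incidence via $H=F/G$, and purity of dimension $s-1$ of $\overline{\mathfrak Z}_I$. The only variation is the closing step: your third bullet obtains $W|_{Y^\circ}=\PP_{Y^\circ}(\mathcal E)$ directly from maximality of the component $W$ against the irreducible bundle, so the first two bullets and the hedged fallbacks are not needed. The paper instead squeezes $\dim Y$ between the bundle bound and a generic-fiber bound, and then gets scheme-theoretic equality from torsion-freeness of the flat bundle.
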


\begin{proof}
By Theorem~\ref{thm:projective-factorization-CI}, the incidence is pure of
dimension $s-1$.  On the integral base $Y$, all $(k+1)$-minors of
the evaluation map vanish identically.  Inverting a nonzero $k$-minor
(the unit $0$-minor if $k=0$) gives a nonempty open $Y^\circ$ with split
normal form $\operatorname{diag}(I_k,0)$.  Thus the full relation
scheme there is the integral projective bundle
$\PP_{Y^\circ}(\mathcal E)$, of dimension $\dim Y+s-k-1$.
The universal-divisor sequence \eqref{eq:secant-bundle-resolution}
identifies this bundle functorially with the factorization incidence
by the unique quotient $H=F/G$.  As a locally closed subscheme of
the pure incidence, it gives $\dim Y\le k$.

Conversely, the generic fiber of the integral $(s-1)$-dimensional
$W$ is an integral closed subscheme of $\PP^{s-k-1}_{K(Y)}$.
The dimension formula gives $\dim Y\ge k$.  Equality follows,
and this full-dimensional integral subscheme is the whole projective
space, scheme-theoretically.
The defining ideal of $W|_{Y^\circ}$ in this bundle vanishes after
localization to $K(Y)$.  The bundle is flat over the integral base, so its
structure sheaf is torsion-free over that base; hence this ideal is zero.
A trivialization of $\mathcal E$ gives the function-field assertion.  If two components had the same image,
both generic fibers would be the full relation fiber; their closures
would coincide, proving the last assertion.
\end{proof}

\subsection{The monic relation chart and its boundary}

Let $K$ be algebraically closed of characteristic zero and fix
\begin{equation}\label{eq:general-support-normalization}
 I=\{a=i_0<i_1<\cdots<i_{s-1}=N\},\qquad 2\le s\le r<N.
\end{equation}
Set
\begin{equation}\label{eq:lattice-index-data}
 M=N-a,\qquad d=\gcd\{i-a:i\in I\},\qquad m=M/d,
\end{equation}
and write $I=a+dB^{\rm prim}$, where $0,m\in B^{\rm prim}$ and $\gcd(B^{\rm prim})=1$.  Over the sparse coefficient space
$\A_K^{s-1}$ consider
\begin{equation}\label{eq:general-sparse-polynomial}
 F_t(x)=x^N+\sum_{i\in I\setminus\{N\}}t_ix^i
       =x^a\overline F_t(x^d).
\end{equation}
For the canonical support, $t_i\mapsto-t_i$ matches the earlier
minus-sign convention.  The boundary $c_N=0$ is taken in $Z_{I,K}$,
where the selected divisor remains monic.

\begin{proposition}[Incidence chart]\label{prop:incidence-factorization-chart}
Assume that $K$ is algebraically closed of characteristic zero, with support as in
\eqref{eq:general-support-normalization}.
The morphism $\cF_{I,r}\to\A_K^{s-1}$ is finite flat of degree $\binom Nr$.  It is naturally
the chart $c_N\ne0$ in the incidence scheme $Z_{I,K}$ of \eqref{eq:incidence}.  The complementary
incidence locus $c_N=0$ maps into the block for $I\setminus\{N\}$ and hence has image dimension
at most $s-2$.
\end{proposition}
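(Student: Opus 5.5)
We plan to derive the three assertions of Proposition~\ref{prop:incidence-factorization-chart} from Lemma~\ref{lem:binary-multiplication} and Theorem~\ref{thm:projective-factorization-CI}, by restricting to the relevant affine opens. We first describe $Z_{I,K}$ in coordinates. A point is a pair $([G],[H])$ with $A_r\ne0$, so $G$ may be normalized to be monic, $G=g$. The relation coordinate is the class of $F=gH\in\PP(V_I)$, written $F=\sum_{i\in I}c_ix^i$. The chart $c_N\ne0$ is the locus where $F$ has the leading monomial $x^N$. There we normalize $c_N=1$, so $F=F_t$ with $t_i=c_i$ for $i\in I\setminus\{N\}$. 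Because $g$ is monic of degree $r$, the quotient $H=F_t/g$ is then monic of degree $N-r$. Hence the chart is exactly the scheme of monic factorizations $F_t=gh$, and this scheme is $\Fact_r(F_t)=\cF_{I,r}$. The identification is natural in families: the monic cofactor is determined by division, so the coordinates of $h$ are polynomial in those of $g$ and $t$.

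Next we prove finite flatness of degree $\binom Nr$. On the $\mu$ side, restrict binary multiplication to the open set $\{A_r\ne0\}\times\{B_{N-r}\ne0\}\subset\PP^r\times\PP^{N-r}$. This open set is the full preimage $\mu^{-1}(\{C_N\ne0\})$, because the leading coefficient of $GH$ is $A_rB_{N-r}$. Its target is the affine space of monic degree-$N$ polynomials. Since $\mu$ is finite flat of rank $\binom Nr$ by Lemma~\ref{lem:binary-multiplication}, this restriction is the finite flat monic multiplication map $\A^r\times\A^{N-r}\to\A^N$ of the same rank. We then base change along the closed embedding $\A^{s-1}_K\to\A^N_K$, $t\mapsto F_t$, which sets the coefficients outside $I$ to zero and $c_N=1$. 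This base change is exactly $\cF_{I,r}\to\A^{s-1}_K$. Finite flatness and the rank are stable under base change, which gives the first assertion. We could equally restrict the flat map $q$ of Theorem~\ref{thm:projective-factorization-CI} to $\{c_N\ne0\}\subset\PP(V_I)$. That route also shows that the chart $\{c_N\ne0\}$ of $Z_{I,K}$ coincides with $q^{-1}\{c_N\ne0\}$, since $c_N\ne0$ already forces $A_r\ne0$.

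For the boundary, take a point of $Z_{I,K}$ with $c_N=0$. Then $F=gH\in V_{I\setminus\{N\}}$, and $F\ne0$ because it is a point of a projective space. The monic $g$ therefore divides a nonzero member of the smaller system, so $p$ maps this locus into the block $X^{(s-1)}_{I\setminus\{N\}}$, which is the rank-deficiency locus for the columns $Q_i$, $i\in I\setminus\{N\}$. For the dimension bound, the locus $\{c_N=0\}\cap Z_{I,K}$ is the intersection of $Z_{I,K}$ with $q^{-1}(\PP(V_{I\setminus\{N\}}))$. The restriction of $q$ to this preimage is finite over $\PP^{s-2}$, so the preimage has dimension at most $s-2$, and its image under $p$ has dimension at most $s-2$ as well.

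The main obstacle is bookkeeping, not mathematics. We must check that the scheme structures agree, so that the chart of $Z_{I,K}$ is $\Fact_r(F_t)$ scheme-theoretically and not merely on points. We handle this through the base-change description of $\mu$ restricted to monic loci. A further point is that the smaller support $I\setminus\{N\}$ no longer has maximum $N$. We avoid needing any structure theorem for that block by bounding dimensions only through the finiteness of $q$.
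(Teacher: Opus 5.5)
Your proposal is correct and follows essentially the paper's route: you identify the chart through the unique monic quotient $F_t/g$, obtain finite flatness of degree $\binom Nr$ by restricting (equivalently, base-changing) the multiplication cover of Theorem~\ref{thm:projective-factorization-CI} to the monic locus, and send the boundary $c_N=0$ into the block for $I\setminus\{N\}$. The one variation is the boundary bound: you use finiteness of $q$ over $\PP(V_{I\setminus\{N\}})\simeq\PP^{s-2}$ directly instead of citing Theorem~\ref{thm:expected-codimension} for the smaller block, but that theorem's upper bound is proved by the same finite-fiber argument, so the two proofs agree in substance.
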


\begin{proof}
On $c_N=1$, the equation is $g\mid F_t$, with a unique monic quotient
of degree $N-r$.  This identifies the chart; its finite flat degree is
the restriction of the cover in Theorem~\ref{thm:projective-factorization-CI}.
On $c_N=0$ the relation is supported in $I\setminus\{N\}$, so
Theorem~\ref{thm:expected-codimension} gives the image bound $s-2$.
\end{proof}

Put
\begin{equation}\label{eq:regular-profile-base}
 U_I^{\rm reg}=\{t_a\ne0,\ \overline F_t\text{ has }m\text{ distinct nonzero roots}\}
 \subseteq\A_K^{s-1}.
\end{equation}

\begin{lemma}[Boundary exclusion]\label{lem:profile-boundary-exclusion}
Assume that $K$ is algebraically closed of characteristic zero, with support as in
\eqref{eq:general-support-normalization}.
Let $C$ be a reduced irreducible component of the finite flat factorization chart
$\cF_{I,r}$.  Then $C$ dominates $\A_K^{s-1}$ and
\begin{equation}\label{eq:profile-boundary-image-bound}
 \dim\operatorname{im}\bigl(C\setminus C|_{U_I^{\rm reg}}\longrightarrow X_I^{(s)}\bigr)
 \le s-2.
\end{equation}
The image in $X_I^{(s)}$ of the complementary
relation-boundary locus $c_N=0$ also has dimension at most $s-2$.
\end{lemma}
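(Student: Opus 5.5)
The plan has three parts: dominance from flatness, a dimension count for the boundary part of $C$, and the relation-boundary case via Proposition~\ref{prop:incidence-factorization-chart}.

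\emph{Dominance.} By Proposition~\ref{prop:incidence-factorization-chart}, $\cF_{I,r}\to\A_K^{s-1}$ is finite and flat. Since $\A_K^{s-1}$ is integral, every associated point of a flat finite algebra over it lies over the generic point. Hence every irreducible component of $\cF_{I,r}$ has its generic point mapping to the generic point of the base. Concretely, a finite flat map is open, so the image of $C$ minus the other components is open and nonempty, and $C$ dominates. In particular $\dim C=s-1$, and the restriction $C\to\A_K^{s-1}$ is finite and surjective.

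\emph{Boundary bound.} The complement $\A_K^{s-1}\setminus U_I^{\rm reg}$ is the proper closed subset cut out by $t_a\Disc(\overline F_t)=0$. It is proper because $\overline F_t$ is a general sparse polynomial in $x$ with nonzero constant term $t_a$, so for general $t$ it is separable in characteristic zero; one can see this by specializing to $x^m-1$ using $0,m\in B^{\rm prim}$. Thus this complement has dimension at most $s-2$. Because $C\to\A_K^{s-1}$ is finite, the preimage $C\setminus C|_{U_I^{\rm reg}}$ also has dimension at most $s-2$. The map $C\to X_I^{(s)}$, $(g,h)\mapsto g$, is a morphism, and images of morphisms do not increase dimension. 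This gives \eqref{eq:profile-boundary-image-bound}.

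\emph{Relation-boundary locus.} This part is taken directly from Proposition~\ref{prop:incidence-factorization-chart}. On $c_N=0$ the relation is supported in $I\setminus\{N\}$, so $g$ lies in the rank locus of the block $H_{I\setminus\{N\}}$, which has $s-1$ columns. By Theorem~\ref{thm:expected-codimension}, that locus has dimension at most $s-2$. Alternatively, one can argue via Lemma~\ref{lem:profile-rank-image}: a component of the $c_N=0$ incidence is contained in the relation incidence for a support of size $s-1$, which is pure of dimension $s-2$ by Theorem~\ref{thm:projective-factorization-CI}, and projecting does not raise dimension.

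\emph{Main obstacle.} The only delicate point is verifying that $U_I^{\rm reg}$ is nonempty, i.e.\ that the discriminant of $\overline F_t$ does not vanish identically. I would argue this by the specialization $t=(-1,0,\dots,0)$, i.e.\ $t_a=-1$ with all other coefficients zero, so that $\overline F_t=x^m-1$. This polynomial has $m$ distinct nonzero roots in characteristic zero. The specialization is allowed because $B^{\rm prim}$ contains $0$ and $m$ and these exponents are present in the support.
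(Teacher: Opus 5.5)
Your proof is correct and follows the paper's argument essentially step for step. The steps are: dominance from flatness over the integral base (nonzero base functions are nonzerodivisors, so minimal primes lie over the generic point); nonemptiness of $U_I^{\rm reg}$ by specializing to $y^m\pm1$ (you take $t_a=-1$, the paper $t_a=1$); the finite-preimage dimension count; and Proposition~\ref{prop:incidence-factorization-chart}, resting on Theorem~\ref{thm:expected-codimension}, for the locus $c_N=0$. Your alternative route for that last step cites Theorem~\ref{thm:projective-factorization-CI}, whose hypothesis $\max I=N$ fails for $I\setminus\{N\}$, but your primary argument does not need it.
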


\begin{proof}
A nonzero function on the integral coefficient base is a nonzerodivisor
in its finite flat factorization algebra, so no minimal prime lies
over a nonzero base prime.  Every $C$ therefore dominates the base.
The open set $U_I^{\rm reg}$ is nonempty: taking $t_a=1$
and all other nonleading coefficients equal to zero gives
$\overline F_t(y)=y^m+1$, which has distinct nonzero roots
in characteristic zero.
The complement of $U_I^{\rm reg}$ is the proper closed set
$V(t_a\Disc(\overline F_t))$.  Its inverse image in the finite cover $C$
has dimension at most $s-2$, as does its image after forgetting the
relation.  The $c_N=0$ assertion is
Proposition~\ref{prop:incidence-factorization-chart}.
\end{proof}

\subsection{Monodromy, profiles, and occupancy}

Over $U_I^{\rm reg}$ the root $0$ has fixed multiplicity $a$, while the
$M=dm$ nonzero roots form $m$ blocks of $d$ roots.  Their geometric monodromy is
\begin{equation}\label{eq:sparse-root-monodromy}
 \Gamma_I=C_d\wr\mathfrak S_m=C_d^m\rtimes\mathfrak S_m
\end{equation}
as stated in Esterov--Lang \cite[Proposition~1.5]{EsterovLang}
(Theorem~1.5 of arXiv:1812.07912v2).
Esterov--Lang note immediately after that proposition that the
univariate result was proved earlier by Dvornicich and Zannier.
The support after removing $x^a$ and dividing by $d$ is primitive,
the endpoint coefficients are nonzero, and $x\mapsto x^d$ is
\emph{\'etale} on $\Gm$.  The unnormalized family on this chart
is the product of $\Gm$ with the monic family, so leading-coefficient
normalization preserves the root cover.

\begin{lemma}[Characteristic-zero transport]\label{lem:monodromy-transport}
The orbit decomposition supplied by \eqref{eq:sparse-root-monodromy} is valid after base
change to every algebraically closed field of characteristic zero.
\end{lemma}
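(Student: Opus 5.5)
The plan is to reduce to the case $K=\overline{\mathbb Q}$, where Esterov--Lang \cite{EsterovLang} applies, using the invariance of geometric monodromy under extension of algebraically closed fields of characteristic zero. Everything is defined over $\mathbb Q$: the family $F_t$ over $\A^{s-1}$, the open set $U_I^{\rm reg}$ of \eqref{eq:regular-profile-base}, and the finite étale root cover over it (nonzero roots of $\overline F_t(x^d)$, with $0$ removed). So for every algebraically closed $K\supseteq\overline{\mathbb Q}$, the cover over $U^{\rm reg}_{I,K}$ is the base change of the cover over $U^{\rm reg}_{I,\overline{\mathbb Q}}$.

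The first step is to phrase the monodromy group as a Galois group. Take the Galois closure $P\to U_I^{\rm reg}$ of the root cover, i.e.\ the scheme of orderings of the $M$ nonzero roots. It is a finite étale $\mathfrak S_M$-torsor. The geometric monodromy group is the stabilizer of a connected component of $P_{\overline K}$, and the orbit decomposition of any associated set, such as selected root subsets, corresponds to the connected components of the matching finite étale cover. The orbits on selected root subsets are exactly what \eqref{eq:sparse-root-monodromy} supplies.

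The second step is to show that connected components of a finite étale cover of a normal connected variety over $\overline{\mathbb Q}$ stay connected after base change to any algebraically closed $K$. A component $P_0$ is a normal variety, geometrically integral over $\overline{\mathbb Q}$ because that field is algebraically closed. So $P_0\times_{\overline{\mathbb Q}}K$ is integral and in particular connected. Hence the components of $P_K$ are the base changes of those of $P$, their stabilizers agree, and the subgroup $\Gamma_I=C_d\wr\mathfrak S_m$ is the same over $K$. Equivalently, the étale fundamental group of $U_I^{\rm reg}$ is invariant under extension of algebraically closed fields in characteristic zero, which is SGA~1, Exp.~XIII.

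Applied to the covers attached to selected subsets of roots, this transports every orbit, and hence the orbit decomposition, to $K$. One more point is needed: for a field $K$ of characteristic zero not containing $\overline{\mathbb Q}$ as given, an embedding $\overline{\mathbb Q}\hookrightarrow K$ always exists, so this is no restriction.

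The one step that needs care, which I expect to be the main obstacle, is matching the statement in \cite{EsterovLang}, formulated over $\mathbb C$, with $\overline{\mathbb Q}$. I would handle it the same way in reverse: compare $\mathbb C$ and $\overline{\mathbb Q}$ through the connected-component argument above, since $\mathbb C\supseteq\overline{\mathbb Q}$, and use the Riemann existence theorem to identify topological monodromy with étale monodromy. One also checks that the leading-coefficient normalization and the $x\mapsto x^d$ reduction noted before the lemma are defined over $\mathbb Q$, so they commute with the base changes.
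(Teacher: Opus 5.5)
Your proposal is correct and follows essentially the same route as the paper. Both arguments work over $\overline{\mathbb Q}$ and observe that the connected components of the finite \'etale nonzero-root, ordered-root and subset covers are unchanged under extension of algebraically closed fields; you justify this by geometric integrality of the components, while the paper cites Stacks Tag~0363. Both then compare with $\mathbb C$, where Esterov--Lang apply, and transport to $K$ through an embedding $\overline{\mathbb Q}\hookrightarrow K$; your Riemann-existence remark only makes explicit the topological-versus-\'etale identification over $\mathbb C$ that the paper leaves implicit.
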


\begin{proof}
Work first over $\overline{\mathbb Q}$, choosing the roots of unity
there.  The nonzero-root, ordered-nonzero-root and subset covers
associated with $\overline F_t(x^d)$ are finite
\emph{\'etale} over the regular coefficient open.  Their connected
components are unchanged by any algebraically closed extension
\cite[Tag 0363]{Stacks}.  After extension to $\mathbb C$,
Esterov--Lang give the stated geometric monodromy.  Choosing an embedding
$\overline{\mathbb Q}\hookrightarrow K$ transports the same components
and labeled subset orbits to $K$; no embedding of $K$ into $\mathbb C$
is required.
\end{proof}

\begin{lemma}[Uniform subset rank]\label{lem:uniform-subset-rank}
Assume that $K$ is algebraically closed of characteristic zero.
Let $y_1,\ldots,y_m$ be the generic roots of the primitive sparse polynomial
$\overline F_t(y)=\sum_{b\in B^{\rm prim}}t_by^b$, with the leading coefficient normalized to one.
For every $k\le s-1$ and every $k$-subset $J\subseteq\{1,\ldots,m\}$,
\begin{equation}\label{eq:uniform-subset-rank}
 \rank (y_j^b)_{j\in J,\ b\in B^{\rm prim}}=k.
\end{equation}
\end{lemma}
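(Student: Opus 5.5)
The plan is to reduce the rank assertion to a generalized Vandermonde determinant, using irreducibility of the ordered $k$-root cover to pass from a general point of the root space to the generic roots $y_1,\ldots,y_m$. Write $B=B^{\rm prim}$, so $|B|=s$ and $0,m\in B$, and normalize $t_m=1$. Over the primitive regular open $U^{\rm reg}=\{t_0\ne0,\ \Disc(\overline F_t)\ne0\}$ consider the ordered $k$-root cover
\[
 Z_k=\{(t,y_1,\ldots,y_k):\ \overline F_t(y_j)=0,\ y_j\ \text{pairwise distinct}\}.
\]
It is finite \'etale over $U^{\rm reg}$, and its points have distinct nonzero $y_j$. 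By \eqref{eq:sparse-root-monodromy} with $d=1$, transported by Lemma~\ref{lem:monodromy-transport}, the geometric monodromy of the $m$ roots is $\mathfrak S_m$. This group is transitive on ordered $k$-tuples of distinct indices, so $Z_k$ is irreducible of dimension $s-1$. Every $k$-subset $J$, once ordered, gives a point of $Z_k$ over the generic point. It therefore suffices to prove that the rank in \eqref{eq:uniform-subset-rank} equals $k$ at the generic point of $Z_k$.

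\emph{Step 1: Vandermonde nonvanishing.} For $k\le s-1$ pick $k$ exponents $b_1<\cdots<b_k$ in $B\setminus\{m\}$, which is possible since this set has $s-1\ge k$ elements. The determinant $\det(y_j^{b_i})$ is a nonzero polynomial in $y_1,\ldots,y_k$, because its monomials $\prod_j y_j^{b_{\sigma(j)}}$ are pairwise distinct. Hence on a dense open $\Omega\subseteq(K^\times)^k$ of tuples with distinct entries, the $k\times(s-1)$ matrix $(y_j^b)_{b\in B\setminus\{m\}}$ has rank $k$. A fortiori the full matrix in \eqref{eq:uniform-subset-rank} has rank $k$ there.

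\emph{Step 2: the projection $Z_k\to\A^k$ is dominant.} Fix $y\in\Omega$. The conditions $\overline F_t(y_j)=0$ form the linear system
\[
 \sum_{b\in B\setminus\{m\}}t_b\,y_j^b=-y_j^m,\qquad 1\le j\le k,
\]
in the $s-1$ unknowns $t_b$. Its coefficient matrix has full row rank $k$ by Step~1, so the system is consistent and its solutions form an affine space of dimension $s-1-k\ge0$. The additional conditions defining $U^{\rm reg}$, namely $t_0\ne0$ and $\Disc\ne0$, are open. To see that they are not violated identically on this solution space, argue as follows. The fibre of $p$ over each $y$ is a linear space, so the bad set $\{t_0\Disc=0\}$ is a proper closed subset of the full incidence $\{(t,y):\overline F_t(y_j)=0\}$ over $\Omega$. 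That incidence is an affine-space bundle over $\Omega$, hence irreducible of dimension exactly $s-1$, the same dimension as $Z_k$. Its intersection with the preimage of $U^{\rm reg}$ is $Z_k|_{\Omega}$. This is nonempty, because $Z_k$ is finite and dominant over $U^{\rm reg}$, so its image in $\A^k$ has dimension $\le k$. One still has to check that this image meets $\Omega$. For that, compare the two $(s-1)$-dimensional irreducible spaces: the closure of $Z_k$ lies in the incidence, and equal dimension plus irreducibility forces equality. Consequently the image of $Z_k$ contains a dense open subset of $\Omega$.

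\emph{Step 3: conclusion.} Since $Z_k$ is irreducible and $Z_k\to\A^k$ is dominant, the generic point of $Z_k$ maps into $\Omega$. By Step~1 the rank there is $k$, which is \eqref{eq:uniform-subset-rank} for every ordered, hence every unordered, $k$-subset $J$.

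The main obstacle is Step~2. One must make sure that the component of the affine-bundle incidence over $\Omega$ really coincides with the closure of the root cover $Z_k$. Otherwise the generic roots might all lie in the rank-deficient locus while $\Omega$ is reached only through degenerate $t$ with $t_0=0$ or a repeated root. I would handle this exactly by the equal-dimension argument above. The inputs are the irreducibility of the bundle incidence over the irreducible base $\Omega$, and the fact that $Z_k$, being finite \'etale over $U^{\rm reg}$, is an open subscheme of the incidence of full dimension $s-1$.
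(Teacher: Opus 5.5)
Your reduction to the generic point of the irreducible ordered cover $Z_k$ is sound, but Step~2 has a genuine gap, and that gap is the actual content of the lemma. You need $Z_k$ to meet the preimage of $\Omega$, and the equal-dimension argument does not give this.

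The closure of $Z_k$ lies in $\{(t,y):\overline F_t(y_j)=0\}\subseteq\A^{s-1}\times\A^k$. For $k\ge2$ this scheme is reducible. Over the diagonal $y_1=y_2$, which has dimension $k-1$, only $k-1$ linear conditions on $t$ remain. The fibre dimension therefore jumps by one, and you get another irreducible component of dimension $s-1$ that never meets $\Omega$. Similar components can occur over loci where $(y_j^b)_{b\in B\setminus\{m\}}$ drops rank.

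So equal dimension plus irreducibility only shows that $\overline{Z_k}$ is \emph{some} $(s-1)$-dimensional component. Saying that it lies in the affine-bundle incidence over $\Omega$ presupposes that $Z_k$ meets $\Omega$, which is circular. The auxiliary claims are also unsupported. A linear fibre can lie entirely inside $\{t_0\Disc=0\}$; for $k=s-1$ it is a single point. And the bound $\dim\le k$ on the image says nothing about whether the image meets $\Omega$.

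The missing idea is to produce a good tuple from an actual regular polynomial, not from a general point of $y$-space. Take $t\in U^{\rm reg}$, so the $m$ roots of $\overline F_t$ are distinct. Any $\sum_{b\in B}c_by^b$ vanishing at all of them is divisible by the monic degree-$m$ polynomial $\overline F_t$, hence is a scalar multiple of it. So the $m\times s$ root matrix has rank $s-1$. Its kernel vector has $c_m=1$, so the columns $B\setminus\{m\}$ alone already have rank $s-1$. Hence some $k$ of its rows are independent, and this gives a point of $Z_k$ lying over $\Omega$. Irreducibility of $Z_k$ then finishes your Step~3, and Step~1 becomes unnecessary.

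This is essentially the paper's argument. The paper gets rank $s-1$ for the full matrix from this one-dimensional kernel. It then uses transitivity of $\mathfrak S_m$ on $k$-subsets, with Galois conjugation fixing the exponent columns, to move a nonzero $k$-minor onto every prescribed row subset. Finally it intersects the finitely many resulting open sets.
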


\begin{proof}
A polynomial of degree at most $m$ vanishing on all the generic roots
is proportional to $\overline F_t$, so the full matrix has rank $s-1$.
For $k\le s-1$, a nonzero $k$-minor exists.  The $\mathfrak S_m$
monodromy is transitive on $k$-subsets and fixes the coefficient
columns, so Galois conjugation supplies one on every prescribed row subset.
On the finite \emph{\'etale} ordered-root cover, the finitely many
rank-failure loci are closed and their images miss the generic point.
Their complement gives a common nonempty open in $U_I^{\rm reg}$.
\end{proof}

Choose an integer $e$ in the range
\begin{equation}\label{eq:e-range-all}
 \max\{0,r-M\}\le e\le\min\{a,r\},\qquad q_e=r-e.
\end{equation}
A binary necklace $\nu$ of length $d$ is a $C_d$-orbit of a binary word,
with Hamming weight $w(\nu)$, orbit size $o(\nu)$, and rotation-stabilizer
order $h_\nu=d/o(\nu)$.  A necklace profile consists of nonnegative
multiplicities $(m_\nu)_\nu$ satisfying
\begin{equation}\label{eq:profile-constraints}
 \sum_\nu m_\nu=m,\qquad
 \sum_\nu m_\nu w(\nu)=q_e.
\end{equation}
It describes an orbit of $\Gamma_I$ on $q_e$-subsets of the nonzero roots.  Define its occupancy
\begin{equation}\label{eq:profile-occupancy}
 \ell(\boldsymbol m)=\sum_{w(\nu)>0}m_\nu.
\end{equation}

\paragraph{Admissible factorization profiles.}
A pair $\pi=(e,\boldsymbol m)$ satisfying
\eqref{eq:e-range-all} and \eqref{eq:profile-constraints} is called an
\emph{admissible profile}.  Let $\cF_\pi$ be its open-and-closed
factorization piece over $U_I^{\rm reg}$, and let
$E_\pi=(\cF_\pi)_{\rm red}$.  Write $W_\pi$ for its reduced closure
in $\overline{\mathfrak Z}_I$, and
$\overline Y_\pi=p(W_\pi)$ with the reduced structure.  Table~\ref{tab:profile-data} summarizes these data.

\begin{table}[H]
\centering
\caption{How profile data enter the relation cover, image, and full cycle.}
\label{tab:profile-data}
\begin{tabular}{@{}p{.24\textwidth}p{.69\textwidth}@{}}
\toprule
Data & Geometric information\\
\midrule
Necklace multiset & Reduced branch $W_\pi$ and degree of $q|_{W_\pi}$.\\
Zero-root split $e$ & Artin factor of length $\binom ae$ in the full incidence.\\
Occupancy $\ell$ & Generic evaluation rank; image dimension and fiber of $p$.\\
Fixed-$e$ sum & Reduced-profile mixed degrees; no factor $\binom ae$.\\
$\binom ae$-weighted total & Mixed degrees of the full, possibly nonreduced incidence.\\
\bottomrule
\end{tabular}
\end{table}

\begin{theorem}[Dominant-profile component theorem]
\label{thm:dominant-profile-components}
Assume that $K$ is algebraically closed of characteristic zero, with support as in
\eqref{eq:general-support-normalization}.  The reduced irreducible components of
$\overline{\mathfrak Z}_I$ are the closures $W_\pi$ indexed by admissible profiles
$\pi=(e,\boldsymbol m)$.  Over $U_I^{\rm reg}$, their reduced degrees above
relation space are
\begin{equation}\label{eq:profile-orbit-degree}
 D(\boldsymbol m)=\frac{m!}{\prod_\nu m_\nu!}\prod_\nu o(\nu)^{m_\nu},
\end{equation}
and their scheme-theoretic generic multiplicities in the full incidence are
$\binom ae$.  Put $\overline Y_\pi=p(W_\pi)$, with its reduced structure.
Then
\begin{equation}\label{eq:profile-rank}
 k_\pi=\rank H_I(\zeta_{\overline Y_\pi})
      =\min\{\ell(\boldsymbol m),s-1\},
 \qquad \dim\overline Y_\pi=k_\pi.
\end{equation}
Distinct profiles have distinct projective images, and the generic fiber of
$W_\pi\to\overline Y_\pi$ is $\PP^{s-k_\pi-1}_{K(\overline Y_\pi)}$.
Consequently,
\begin{equation}\label{eq:X-component-classification}
 \begin{aligned}
 \Irr\bigl((\overline X_I^{(s)})_{\mathrm{red}}\bigr)
 &\simeq\Irr\bigl((X_I^{(s)})_{\mathrm{red}}\bigr)\\
 &\simeq\{\pi=(e,\boldsymbol m):\ell(\boldsymbol m)\ge s-1\}.
 \end{aligned}
\end{equation}
For each such dominant profile, $p:W_\pi\to\overline Y_\pi$ is birational,
and the ambient determinantal schemes $\overline X_I^{(s)}$ and
$X_I^{(s)}$ have generic multiplicity $\binom ae$ along
$\overline Y_\pi$ and its monic chart, respectively.  Profiles of smaller occupancy
are contracted to dimension $\ell(\boldsymbol m)$.
\end{theorem}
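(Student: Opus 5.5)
The plan is to work entirely over the regular open $U_I^{\rm reg}$ and then extend by closure. Boundary exclusion (Lemma~\ref{lem:profile-boundary-exclusion}) and the purity of dimension $s-1$ in Theorem~\ref{thm:projective-factorization-CI} make this legitimate.

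\emph{Step 1: components over the regular locus.} Over $U_I^{\rm reg}$, a monic degree-$r$ divisor $g\mid F_t$ is determined by two data: the multiplicity $e$ of the root $0$, with $0\le e\le a$, and a $q_e$-subset of the $M=dm$ simple nonzero roots. The range \eqref{eq:e-range-all} is exactly the condition that such a choice is possible. Near a fiber point, $F_t=x^a\cdot\widetilde F_t$ with $\widetilde F_t$ étale. The factorization algebra therefore splits locally as the product of two pieces. The first is the Artin factorization algebra of $x^a$ in degree $e$, which is the rank-$\binom ae$ cover of the point. The second is the étale subset cover of the nonzero roots. The first factor gives the generic multiplicity $\binom ae$, and the reduced structure is the subset cover.

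By Lemma~\ref{lem:monodromy-transport} and \eqref{eq:sparse-root-monodromy}, the connected components of this subset cover are the $\Gamma_I=C_d\wr\mathfrak S_m$-orbits on $q_e$-subsets. Intersecting a subset with each block of $d$ roots gives a binary word up to rotation, so each orbit is recorded by a necklace profile. Counting the orbit gives $D(\boldsymbol m)$: choose which blocks carry which necklace (the multinomial factor), then a rotation in each block ($o(\nu)$ choices). Lemma~\ref{lem:profile-boundary-exclusion} shows that every component dominates and that nothing of dimension $s-1$ lies over the boundary or over $c_N=0$. Hence the closures $W_\pi$ are exactly the reduced components, and the multiplicity $\binom ae$ persists at their generic points.

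\emph{Step 2: rank and image.} Take a generic point of $W_\pi$ and write $g=x^e\prod_{j\in S}(x-\alpha_j)$, where $S$ is the chosen subset. Evaluate $H_I$ on the root data of $g$. The zero root contributes conditions of the form $x^i\bmod x^e$, which are all zero because every $i\in I$ satisfies $i\ge a\ge e$. For the nonzero roots, write $x^i=x^a(x^d)^{b}$. The rows coming from roots in the same block and in the same $d$-th power class give proportional evaluations $\alpha^a y^b$. So after column scaling, the rank equals the rank of $(y_j^b)$ over the occupied blocks, of which there are $\ell(\boldsymbol m)$.

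Two points need care here. Rows from one block differ only by the factor $\alpha^a$, which is a nonzero scalar. Generic roots are simple, so the secant evaluation is an honest evaluation. Lemma~\ref{lem:uniform-subset-rank} then gives rank $\min\{\ell,s-1\}$: apply it to $\min\{\ell,s-1\}$ occupied blocks, and use that the full rank is bounded by $s-1$. Lemma~\ref{lem:profile-rank-image} then yields $\dim\overline Y_\pi=k_\pi$, the generic fiber $\PP^{s-k_\pi-1}$, and distinctness of the images.

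\emph{Step 3: classification and multiplicities in $X$.} The components of $(\overline X_I^{(s)})_{\rm red}$ are the images of dimension $s-1$, because $\overline X$ is pure of dimension $s-1$ by Theorem~\ref{v6:lin:uniform} and is the image of $p$. These are exactly the images with $\ell\ge s-1$. For these profiles the fiber is a point, and $p$ is birational. By Theorem~\ref{thm:projective-factorization-CI}, $p$ is a scheme-theoretic isomorphism near each generic point of the image. Hence the length of $\mathcal O_{\overline X}$ at $\overline Y_\pi$ equals the length of $\mathcal O_{\overline{\mathfrak Z}}$ at $W_\pi$, which is $\binom ae$.

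The monic chart is dense in each component because $W_\pi$ dominates through the monic chart $\cF_{I,r}$. This gives the bijection with $\Irr((X_I^{(s)})_{\rm red})$.

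The main obstacle is Step~1's identification of scheme structure. One must show that the local splitting of the factorization algebra into the Artin factor and the étale factor holds in the incidence itself, not just in $\cF_{I,r}$, so that the generic length is exactly $\binom ae$. I would prove this by a Hensel/coprime factorization $F_t=x^a\widetilde F_t$ over the étale-local base, which identifies $\Fact_r$ with $\coprod_e\Fact_e(x^a)\times\Fact_{r-e}(\widetilde F_t)$.
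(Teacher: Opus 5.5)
Your proposal is correct and follows essentially the same route as the paper. The shared steps are: the coprime splitting into the Artin factor $\Fact_e(x^a)$ of length $\binom ae$ times the \'etale subset cover; wreath-product orbits giving $D(\boldsymbol m)$; division of each evaluation row by $\alpha^a$ together with Lemma~\ref{lem:uniform-subset-rank} for the rank; Lemma~\ref{lem:profile-rank-image} for images, fibers and distinctness; purity for the component classification; and the scheme-theoretic isomorphism near generic dominant image points for the multiplicity transfer, which the paper re-derives via an explicit $(s-1)$-minor normal form where you cite Theorem~\ref{thm:projective-factorization-CI}.

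Two small points need correcting. Dividing by $\alpha^a$ is a row scaling, not a column scaling. The splitting you flag as the main obstacle needs no \'etale localization: $x^a$ and $\overline F_t(x^d)$ are globally coprime over $U_I^{\rm reg}$, and $q^{-1}(c_N\ne0)$ is the chart $\cF_{I,r}$ by Proposition~\ref{prop:incidence-factorization-chart}.
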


\begin{proof}
\emph{Relation branches and their multiplicities.}
Over $U_I^{\rm reg}$, coprimality of $x^a$ and the squarefree
$\overline F_t(x^d)$ gives open-and-closed pieces
\begin{equation}\label{eq:factorization-e-splitting}
 \Fact_r(x^a\overline F_t(x^d))
 \simeq\coprod_e\Fact_e(x^a)\times\Fact_{q_e}(\overline F_t(x^d)).
\end{equation}
The algebra $\mathcal A_{a,e}=\mathcal O(\Fact_e(x^a/K))$ is supported
at $(x^e,x^{a-e})$.  Over $x^a$ both factors are monic, so it is the
entire fiber of the multiplication map in Lemma~\ref{lem:binary-multiplication}
and has length $\binom ae$,
including length one at $e=0,a$; compare the general
factorization-algebra basis theorem in \cite{LaksovThorup}.  The second factor is finite
\emph{\'etale} over the regular base.  Its connected components are
integral and correspond to wreath-product orbits: assigning types to blocks and then
rotating their words gives \eqref{eq:profile-orbit-degree}.
The generic local algebra
$\mathcal A_{a,e}\otimes_K K(W_\pi)$ has residue-field length
$\binom ae$ and $K(\PP(V_I))$-dimension $\binom ae D(\boldsymbol m)$.

Finite flatness, as in Lemma~\ref{lem:profile-boundary-exclusion},
makes every projective incidence component dominate relation space.
Thus the closures $W_\pi$ exhaust all components.

\emph{Images and contraction.}
At a generic profile point all monomials in $I$ vanish modulo $x^e$,
since $a\ge e$.  At a selected nonzero root $\alpha$, dividing the
evaluation row by $\alpha^a$ gives
$((\alpha^d)^b)_{b\in B^{\rm prim}}$: one row per occupied block.
Lemma~\ref{lem:uniform-subset-rank} gives rank
$\min\{\ell(\boldsymbol m),s-1\}$: when at least $s-1$ blocks are occupied,
choose $s-1$ of them, and use the sparse relation for the matching upper bound.
This proves the rank formula.  Lemma~\ref{lem:profile-rank-image} now gives
the image dimensions, the full generic projective fibers, and distinctness
of the images.  Since $p$ is proper and covers the pure $(s-1)$-dimensional
scheme $\overline X_I^{(s)}$ set-theoretically, its maximal-dimensional images
are precisely the irreducible components of that scheme.  Every $W_\pi$ meets
the chart with nonzero leading relation coefficient, where $G$ is monic;
thus every image meets the monic chart, giving \eqref{eq:X-component-classification}.

\emph{Transferring the scheme multiplicity.}
Invert a nonzero $(s-1)$-minor at a generic dominant image point.
Row and column operations put the restriction matrix in the form
$\left(\begin{smallmatrix}I_{s-1}&v\\0&b\end{smallmatrix}\right)$.
The determinantal ideal is generated by the entries of $b$.  The projective
kernel equations force the final relation coordinate to be nonzero, and
normalizing it to one eliminates the other coordinates as $-v$; the remaining
equations are exactly $b=0$.  Thus the full incidence and the determinantal
image are scheme-theoretically isomorphic on this open set, including
nilpotents.  The previously computed generic incidence length is consequently
the generic multiplicity of the ambient determinantal scheme along its reduced image component.  The rank--image lemma also makes the
map of reduced dominant components birational.
\end{proof}

Proposition~\ref{prop:formal-profile-model} gives the corresponding
completed local models.  Cohen--Macaulayness now turns generic
multiplicities into a reducedness criterion.

\begin{corollary}[Geometric reducedness]\label{cor:geometric-reducedness}
Let $K$ be a field of characteristic zero, and let $I$ satisfy
\eqref{eq:general-support-normalization}, with $a=\min I$.
Then
\[
 X_I^{(s)}\text{ is geometrically reduced}
 \quad\Longleftrightarrow\quad
 \overline X_I^{(s)}\text{ is geometrically reduced}
 \quad\Longleftrightarrow\quad a\le1.
\]
If $a=1$, both schemes are geometrically reduced but not geometrically
integral.  For primitive supports $d=1$, they are geometrically integral
when $a=0$, have exactly two geometric irreducible components when $a=1$,
and are nonreduced when $a\ge2$.
\end{corollary}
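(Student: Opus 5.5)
Since reducedness is a geometric property and the determinantal construction commutes with base change, I will first base-change to an algebraic closure $\overline K$ of characteristic zero; Lemma~\ref{lem:monodromy-transport} makes the profile data available there. By Theorem~\ref{v6:lin:uniform}, $\overline X_I^{(s)}$ is Cohen--Macaulay, and so is its open monic chart $X_I^{(s)}$. A Cohen--Macaulay scheme has no embedded points, so it satisfies $S_1$. Therefore it is reduced if and only if it is generically reduced, that is, if and only if its multiplicity along every irreducible component is one.

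By Theorem~\ref{thm:dominant-profile-components}, the components of both $\overline X_I^{(s)}$ and $X_I^{(s)}$ correspond to the dominant profiles $\pi=(e,\boldsymbol m)$ with $\ell(\boldsymbol m)\ge s-1$, and the generic multiplicity along $\overline Y_\pi$ (or along its monic chart) is $\binom ae$. The monic chart loses no component, because every image meets it. Hence both schemes are reduced exactly when $\binom ae=1$ for every $e$ that occurs in a dominant profile. This gives the first equivalence at once, and reduces the second to a combinatorial question: for which values of $a$ does some dominant profile have $0<e<a$?

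If $a\le1$, then $\binom ae=1$ for every allowed $e$, so both schemes are reduced. If $a\ge2$, I will exhibit a dominant profile with $e=1$, so that the multiplicity is $a\ge2$. I take $e=1$, so $q_1=r-1$. The range condition \eqref{eq:e-range-all} then reads $\max\{0,r-M\}\le 1\le\min\{a,r\}$, and it holds because $r\ge s\ge2$ and $r<N=a+M$, whence $r-M\le a-1$.

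I expect the main obstacle to be the occupancy requirement $\ell(\boldsymbol m)\ge s-1$ together with $\sum m_\nu w(\nu)=r-1$ and $\sum m_\nu=m$. There are $r-1$ nonzero roots to place in $m$ blocks of size $d$, and the constraint $r-1\le dm$ holds since $r\le N-1$ gives $r-1\le a+M-2$, but I must also check $r-1\le M$. So I will first try a smaller $e$ satisfying $r-M\le e$. Both $e=\max\{0,r-M\}$ and $e=\max\{0,r-M\}+1$ must lie in the range, with the second giving $0<e<a$. Here I use $a\ge2$ and $r-M\le a-1$ to ensure $e\le\min\{a,r\}$ and $e\ne a$ when possible. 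If $r-M=a-1$, I instead take $e=a-1\ge1$.

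For occupancy, I distribute the $q_e$ roots as evenly as possible, occupying $\min\{q_e,m\}$ blocks. I need $\min\{q_e,m\}\ge s-1$. Since $|B^{\rm prim}|=s$ and $B^{\rm prim}\subseteq[0,m]$, we have $m\ge s-1$. Also $q_e\ge r-a\ge\dots$ is not obviously large enough, so in that case I will lower $e$ toward $0$ while keeping $e\ge1$; $e=1$ gives $q_e=r-1\ge s-1$.

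For $a=1$, the profiles with $e=0$ and $e=1$ (for example, even distributions, when both are admissible) are both dominant. This gives at least two components, so the schemes are not integral. For $d=1$ the only necklaces are the words $0$ and $1$, so each $e$ gives a unique profile. Its occupancy is $\min\{q_e,m\}$, which is at least $s-1$ since $q_e\ge r-a$. Hence for $a=0$ the scheme has one component, and for $a=1$ it has exactly the two components $e=0,1$, each admissible because $r<N$ and $r\ge2$.
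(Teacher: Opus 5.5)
Your framework is the paper's. You pass to $\overline K$, use Cohen--Macaulayness so that reducedness reduces to generic reducedness, and read the multiplicities $\binom ae$ of dominant profiles off Theorem~\ref{thm:dominant-profile-components}. For $d=1$ you use that each admissible $e$ carries a single profile. The $a\le1$ direction and the $a=0$, $a=1$ counts are fine.

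The gap is in the direction $a\ge2$, the only step needing real work. You must exhibit one $e$ satisfying simultaneously
\[
 \max\{0,r-M\}\le e,\qquad 1\le e\le a-1,\qquad q_e=r-e\ge s-1,
\]
and the proposal never does this.
\begin{enumerate}[label=(\roman*),leftmargin=*]
\item Your first claim, that $e=1$ is admissible, does not follow from $r-M\le a-1$; it needs $r-M\le1$.
\item Your replacement $e=\max\{0,r-M\}+1$ (or $a-1$ if that equals $a$) lies in the range but can fail dominance.
\item Your fallback ``lower $e$ toward $0$; $e=1$ gives $q_e=r-1$'' ignores the lower bound $e\ge r-M$.
\end{enumerate}
For instance, take $I=\{4,5\}$, $s=2$, $r=3$. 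Then $M=m=d=1$ and the admissible $e$ are $2$ and $3$. Your rule selects $e=3$, since $r-M=2\ne a-1$. This gives $q_e=0$ and the contracted profile $G=x^3$. Meanwhile $e=1$ is inadmissible because $q_1=2>M$. The component that actually shows nonreducedness has $e=2$, namely $G=x^2(x+t)$, with multiplicity $\binom42=6$.

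The missing input is $M\ge s-1$, which holds because the support contains $s$ distinct integers in $[a,N]$. You recorded $m\ge s-1$ but never played it against the lower bound on $e$. With it, take $e_0=\max\{1,r-M\}$, as the paper does:
\begin{enumerate}[label=(\roman*),leftmargin=*]
\item $e_0\le a-1$ follows from $a\ge2$ and $r<a+M$;
\item $e_0\le r-s+1$ follows from $r\ge s$ and $M\ge s-1$.
\end{enumerate}
Hence $s-1\le q_{e_0}\le M$. Your even distribution, or one root in each of $s-1$ blocks plus arbitrary further roots, then occupies at least $s-1$ blocks. This yields a dominant profile with multiplicity $\binom a{e_0}>1$.

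One smaller point concerns the $d=1$ clause ``nonreduced when $a\ge2$''. If it is read over $K$ itself, you also need the remark that in characteristic zero a reduced finite-type $K$-scheme is geometrically reduced, so nonreducedness descends from $\overline K$.
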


\begin{proof}
Over an algebraic closure, Theorems~\ref{thm:expected-codimension},
\ref{thm:secant-compactification}, and
\ref{thm:dominant-profile-components} make both schemes
Cohen--Macaulay and pure of dimension $s-1$, with minimal components
meeting the regular dominant-profile open.  If $a\le1$, their generic
lengths are $\binom ae=1$; thus $R_0$ and $S_1$ imply reducedness.

Suppose that $a\ge2$, and put $e_0=\max\{1,r-M\}$.  Since the support
contains $s$ distinct integers between $a$ and $N$, one has $M\ge s-1$.
The inequalities $r<N=a+M$ and $r\ge s$ give
\[
 1\le e_0\le a-1,\qquad e_0\le r-s+1,\qquad
 s-1\le q_{e_0}=r-e_0\le M.
\]
Also $m\ge s-1$, since $B^{\rm prim}$ has $s$ distinct elements
in $[0,m]$.  Choose one nonzero root in each
of $s-1$ distinct blocks and then any further $q_{e_0}-(s-1)$ roots.
This is possible since $q_{e_0}\le M$, and it produces a dominant
profile with $0<e_0<a$.  The ambient generic multiplicity along its
image is $\binom a{e_0}>1$.  Both the projective stratum and its monic
chart are consequently nonreduced.

When $a=1$, both $e=0,1$ are admissible and have $q_e\ge s-1$.
The same construction gives dominant profiles with distinct images,
hence at least two components.  If $d=1$, the nonzero-root monodromy is
$\mathfrak S_M$, transitive on subsets of each fixed cardinality.
There is exactly one profile for each admissible $e$, proving the two
primitive classifications.  Nonreducedness for $a\ge2$ also descends
to $K$: in characteristic zero a reduced finite-type $K$-scheme is
geometrically reduced.
\end{proof}

\section{Normal relation covers and product normalizations}\label{sec:normalization}
\label{v6:norm:section}

Over the algebraically closed characteristic-zero field of
Section~\ref{sec:monodromy}, write $\widetilde W_\pi$ and
$\widetilde Y_\pi$ for the normalizations of the reduced branch and image.
For arbitrary sparse support we classify the former over fixed
relation space; complete progressions also permit image-normality tests.

Write $n_\nu=m_\nu$ and $w_\nu=w(\nu)$, retaining $h_\nu$;
then \eqref{eq:profile-constraints} is
$\sum_\nu n_\nu=m$, $\sum_\nu n_\nu w_\nu=r-e$.

\subsection{The normal cover over an arbitrary sparse relation space}

Identify $R_I=\PP(K^I)$ with the relation space $\PP(V_I)$ in the
fixed monomial basis, and let $L/K(R_I)$
be the splitting field of the generic relation after its fixed factor
$X^a$ is removed.  On its regular open set the Galois group is
$\Gamma=C_d^m\rtimes\mathfrak S_m$, by
\eqref{eq:sparse-root-monodromy}.  Let $\mathcal T_I$ be the
normalization of $R_I$ in $L$.

\begin{proposition}[Classification of normal relation covers]
\label{v6:norm:cover}
For every admissible profile, choose rotation coordinates in the $m$
root blocks and put
\begin{equation}\label{v6:norm:stabilizer}
 H_\pi=\prod_{\nu:n_\nu>0}
       \bigl(C_{h_\nu}^{\,n_\nu}\rtimes\mathfrak S_{n_\nu}\bigr)
       \ \subseteq\ \Gamma.
\end{equation}
There is an isomorphism over $R_I$
\begin{equation}\label{v6:norm:cover-quotient}
 \widetilde W_\pi\simeq\mathcal T_I/H_\pi.
\end{equation}
For two profiles on this fixed support, their normal relation covers
are isomorphic over $R_I$ if and only if
\begin{equation}\label{v6:norm:cover-classification}
 \{\!\{(h_\nu,n_\nu):n_\nu>0\}\!\}_\pi
 =
 \{\!\{(h_\nu,n_\nu):n_\nu>0\}\!\}_{\pi'}.
\end{equation}
The multiplicities in these multisets are retained.
\end{proposition}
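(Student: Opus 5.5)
We will prove \eqref{v6:norm:cover-quotient} through function fields and then reduce the classification to a statement about conjugacy of subgroups of $\Gamma$.

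\emph{Function fields.} By Theorem~\ref{thm:dominant-profile-components}, $W_\pi$ is an integral branch of $\overline{\mathfrak Z}_I$ that dominates $R_I$. Over $U_I^{\rm reg}$ it is the connected component $E_\pi$ of the finite \'etale cover $\Fact_{q_e}(\overline F_t(x^d))$, reduced and taken together with the $\Fact_e(x^a)$ factor. A point of this cover is the choice of a $q_e$-subset $S$ of the nonzero roots. Hence $K(W_\pi)$ is the fixed field $L^{\operatorname{Stab}_\Gamma(S)}$ for any $S$ in the orbit labelled by the profile.

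We next compute the stabilizer. Choose $S$ so that the blocks of each type $\nu$ carry the same representative word. Suppose $(c,\sigma)\in C_d^m\rtimes\mathfrak S_m$ fixes $S$. Since $\Gamma$ permutes blocks and rotates words within them, $\sigma$ must permute blocks of equal necklace type. After conjugating by a rotation in each block so that the chosen words agree, the rotation $c_j$ must fix the word in block $j$. The rotation stabilizer of a word of necklace $\nu$ is $C_{h_\nu}$. This gives $\operatorname{Stab}(S)=H_\pi$ as in \eqref{v6:norm:stabilizer}, with respect to the rotation coordinates just chosen. The count $[\Gamma:H_\pi]=D(\boldsymbol m)$ serves as a consistency check.

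\emph{Normalizations.} $\widetilde W_\pi$ is the normalization of $R_I$ in $K(W_\pi)$. It is finite over $R_I$ because $W_\pi\to R_I$ is finite: it is a closed subscheme of the finite $q$ of Theorem~\ref{thm:projective-factorization-CI}. Since $R_I$ is normal and $\mathcal T_I$ is its normalization in the Galois extension $L$, the quotient $\mathcal T_I/H_\pi$ is normal, finite over $R_I$, and has function field $L^{H_\pi}$. Uniqueness of normalization in a given field then gives \eqref{v6:norm:cover-quotient}, and this isomorphism is over $R_I$.

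\emph{Classification.} Two normal finite covers $\mathcal T_I/H$ and $\mathcal T_I/H'$ of $R_I$ are isomorphic over $R_I$ if and only if $L^H\simeq L^{H'}$ over $K(R_I)$, that is, if and only if $H$ and $H'$ are conjugate in $\Gamma$. This uses Galois theory together with the fact that normalization is a functor on function fields. It remains to show that $H_\pi$ and $H_{\pi'}$ are $\Gamma$-conjugate exactly when the multisets in \eqref{v6:norm:cover-classification} agree.

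\emph{Sufficiency.} Suppose the multisets agree. A block permutation $\sigma$ matches the supports of equal pairs $(h,n)$. Within $C_d$, the subgroup $C_h$ is determined by $h$ alone, so block-type labels $\nu$ with the same $h_\nu$ and $n_\nu$ become indistinguishable. Conjugating by $\sigma$ therefore carries $H_\pi$ to $H_{\pi'}$.

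\emph{Necessity.} This is the main obstacle. We must recover the multiset $\{\!\{(h_\nu,n_\nu)\}\!\}$ as a conjugacy invariant of $H_\pi$. We plan to use the following two invariants.

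First, consider the image of $H_\pi$ in $\mathfrak S_m$. It is the Young subgroup $\prod_\nu\mathfrak S_{n_\nu}$, and its orbits on blocks have sizes $n_\nu$. This orbit partition is invariant under conjugation in $\Gamma$, since conjugation only permutes the blocks.

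Second, consider the intersection of $H_\pi$ with the base group $C_d^m$. It is $\prod_\nu C_{h_\nu}^{n_\nu}$, and its projection to each block coordinate $j$ is $C_{h_\nu}$. Conjugation by an element $(c,\sigma)$ permutes these block projections via $\sigma$; the $c$ part acts trivially because $C_d$ is abelian.

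Together, the two invariants attach the label $h$ to each block in a conjugation-compatible way. Each orbit of $\mathfrak S$ then carries a well-defined pair $(h,n)$. The remaining subtlety is two distinct necklaces $\nu\ne\nu'$ with the same $h$. Their orbits on blocks are still separate in the Young subgroup, so they contribute two pairs $(h,n_\nu)$ and $(h,n_{\nu'})$ rather than merging. This is exactly why the multiplicities in the multiset must be retained.
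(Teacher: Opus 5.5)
Your proposal is correct and follows essentially the same route as the paper. You identify $K(W_\pi)$ with $L^{H_\pi}$ through the stabilizer of a selected root subset, realize $\widetilde W_\pi$ as the invariant quotient of the normalization in $L$, and reduce the classification to $\Gamma$-conjugacy, detected by the orbit partition of the image in $\mathfrak S_m$ together with the coordinate projections of $H_\pi\cap C_d^m$. The only cosmetic difference is that the paper checks the quotient identification on affine opens via $C^{H_\pi}=C\cap L^{H_\pi}$, whereas you invoke uniqueness of normalization in a given function field.
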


\begin{proof}
On the regular relation locus a profile is the transitive
$\Gamma$-set of selected root subsets.  Block rotations preserving the selection form $C_{h_\nu}$, and
permutations preserve necklace types.  Choosing representatives gives
the stabilizer \eqref{v6:norm:stabilizer}.  The finite branch over $R_I$ has function field $L^{H_\pi}$, so
$\widetilde W_\pi$ is the integral closure in that field.  On an affine base open $\operatorname{Spec}A_0$, let
$C$ be the integral closure of $A_0$ in $L$; it is finite over $A_0$
since $A_0$ is a finite-type algebra over a field
\cite[Tag 0335]{Stacks}.  Then $C^{H_\pi}=C\cap L^{H_\pi}$ is exactly
the integral closure of $A_0$ in $L^{H_\pi}$.  These invariant rings
localize on base opens and give the finite quotient
$\mathcal T_I/H_\pi$, proving \eqref{v6:norm:cover-quotient}.

An isomorphism of these normal covers is equivalent to an isomorphism
of the intermediate fields over $K(R_I)$, or to conjugacy of the
corresponding subgroups in $\Gamma$.  Indeed, an isomorphism between
intermediate fields of a finite Galois extension extends to an
automorphism of that extension \cite[Tag 0BME]{Stacks}; conversely,
such an automorphism preserves integrality over each base open and
extends across the boundary.  The image of $H_\pi$ in
$\mathfrak S_m$ has the block sets of cardinalities $n_\nu$ as its
orbits, including singleton orbits.  Its intersection with the base
group $C_d^m$ restricts on every coordinate of such an orbit to the
unique subgroup of order $h_\nu$.  Conjugation in the wreath product
permutes these orbits and preserves these orders.  Thus conjugacy
implies \eqref{v6:norm:cover-classification}.  Conversely, equality
of the multisets permits a permutation matching the corresponding
block sets; it conjugates the standard subgroups
\eqref{v6:norm:stabilizer}.  Empty and full necklaces both have
$h_\nu=d$, but remain separate Young blocks when both occur;
repeated pairs and singleton blocks are therefore not amalgamated.
The fixed allocation $e$ enters neither subgroup.  This proves the converse.
\end{proof}

\begin{remark}\label{v6:norm:cover-scope}
These are covers of the fixed relation space; weight, necklace
shape, and allocation $e$ can be forgotten.  For general sparse
support this does not identify the image normalization.

At a prime divisor of $R_I$, the orbit lengths of geometric inertia
on $\Gamma/H_\pi$ give the normal cover's ramification indices,
as follows over the strict henselization of the corresponding valuation
ring in characteristic zero.  Image conductors require separate data.
\end{remark}

\begin{remark}\label{v6:norm:ordered-root-warning}
The normalization in the definition of $\mathcal T_I$ is essential.
On the monic chart a natural ordered-root model has equations
\[
 e_j(u_1^d,\ldots,u_m^d)=0
 \qquad\text{for }m-j\notin B^{\rm prim}.
\]
These equations need not define a normal scheme;
Proposition~\ref{v6:norm:cover} uses its integral closure.
\end{remark}

\subsection{Both normalizations for complete progressions}

We now impose the stronger hypothesis
\begin{equation}\label{v6:norm:full-progression}
 I=a+d\{0,1,\ldots,m\},\qquad s=m+1.
\end{equation}
Fix a primitive $d$th root of unity $\zeta$.  For each necklace choose
a subset $S_\nu\subseteq\mathbb Z/d\mathbb Z$ representing it, so
$|S_\nu|=w_\nu$.  The classes $\eta,\xi$ retain their meanings from
Section~\ref{sec:projective-incidence}.

\begin{theorem}[Product normalizations and their polarizations]
\label{v6:norm:product}
Under \eqref{v6:norm:full-progression}, every admissible profile has
\begin{equation}\label{v6:norm:product-formulas}
 \widetilde W_\pi\simeq
       \prod_{\nu:n_\nu>0}\PP^{n_\nu},\qquad
 \widetilde Y_\pi\simeq
       \prod_{\substack{\nu:n_\nu>0\\w_\nu>0}}\PP^{n_\nu}.
\end{equation}
The lift of $W_\pi\longrightarrow\overline Y_\pi$ to these
normalizations forgets the factor belonging to the empty necklace.
If $H_\nu$ is the hyperplane class of the corresponding factor, then
\begin{equation}\label{v6:norm:polarizations}
 \eta=\sum_\nu\frac{w_\nu}{h_\nu}H_\nu,
 \qquad
 \xi=\sum_\nu\frac d{h_\nu}H_\nu.
\end{equation}
The second formula is on $\widetilde W_\pi$; the empty-necklace
coefficient in the first formula is zero.
\end{theorem}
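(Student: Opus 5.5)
The plan is to write down an explicit finite birational morphism from each product of projective spaces onto $W_\pi$, respectively onto $\overline Y_\pi$. Normality of products of projective spaces then identifies these morphisms with the normalizations. The polarization formulas will be read off from the degrees of the parametrizing forms.

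\emph{Parametrizing a single block.} Under \eqref{v6:norm:full-progression}, the relation space $\PP(V_I)$ is the full space of forms $X^a\overline F(X^d,Y^d)$, where $\overline F$ has degree $m$. Hence $\PP(V_I)\simeq\PP^m\simeq\operatorname{Sym}^m\PP^1$ via the roots $y_j$ of $\overline F$. In a block with root $y=u^d$, a selected subset $S_\nu+k$ of the $d$th roots yields the factor
\[
 \prod_{j\in S_\nu}(X-\zeta^{j+k}uY).
\]
Rotation $u\mapsto\zeta u$ replaces $S_\nu$ by $S_\nu+1$. Since the stabilizer of $S_\nu$ is generated by the shift by $o(\nu)=d/h_\nu$, the factor is invariant under $u\mapsto\zeta^{o(\nu)}u$. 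It is therefore a form in $X$ and $z=u^{h_\nu}$. Homogenizing $z=[z_0:z_1]$, this factor is a bihomogeneous form of degree $w_\nu/h_\nu$ in $(z_0,z_1)$. This holds including the point at infinity $z_0=0$, which gives the factor $Y^{w_\nu}$, and the point $z=0$. The block root is recovered as $y=z^{o(\nu)}$, a map $\PP^1\to\PP^1$ of degree $d/h_\nu$. For the empty necklace the factor is $1$ and only $y$, that is $z$ with $h_\nu=d$, is remembered. For the full necklace the factor is $X^d-yY^d$, of degree $1$ in $z=y$.

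\emph{The maps and their finiteness.} Taking symmetric products over the $n_\nu$ blocks of type $\nu$ gives $\operatorname{Sym}^{n_\nu}\PP^1=\PP^{n_\nu}$. I then define a morphism
\[
 \Phi:\prod_{\nu}\PP^{n_\nu}\to\PP^r\times\PP^{N-r}
\]
by $G=X^e\prod_\nu\prod(\text{block factors})$ and $H=GH'/G$. Here $GH'$ is the relation $X^a\prod_j(X^d-y_jY^d)$, and $H$ is the analogous product of complementary factors times $X^{a-e}$. Its image lies in $\overline{\mathfrak Z}_I$ and, over $U_I^{\rm reg}$, hits exactly the profile piece $\cF_\pi$. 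Since the source is irreducible, $\Phi$ lands in $W_\pi$. The relation composite
\[
 \prod_\nu\PP^{n_\nu}\longrightarrow\prod_\nu\operatorname{Sym}^{n_\nu}\PP^1\xrightarrow{\ \text{mult}\ }\operatorname{Sym}^m\PP^1
\]
pulls $\mathcal O(1)$ back to $\sum_\nu (d/h_\nu)H_\nu$, because $z\mapsto z^{o(\nu)}$ has degree $d/h_\nu$ and symmetric multiplication pulls back $\mathcal O(1)$ to $\mathcal O(1,\dots,1)$. This is the $\xi$ formula. Since all of these coefficients are positive, $\xi$ is ample on the source, so $\Phi$ contracts no curve and is finite.

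\emph{Birationality and the image.} Over a regular relation point, the fiber of $\Phi$ is the set of choices of $z$ in each block compatible with a given selected subset. By construction this fiber is a single point, so $\Phi$ is generically injective. This is consistent with $\deg\Phi^*\xi$ versus $D(\boldsymbol m)$ and with the stabilizer $H_\pi$ of Proposition~\ref{v6:norm:cover}. In characteristic zero, finite and generically injective onto the integral $W_\pi$ means finite birational, so normality of the source gives $\widetilde W_\pi$. The $\eta$ formula is read off from the $G$-degree $w_\nu/h_\nu$ of each block factor computed above; the empty-necklace coefficient is $0$ because that factor does not occur in $G$. Consequently $G$ factors through $\prod_{w_\nu>0}\PP^{n_\nu}$, on which $\sum_{w_\nu>0}(w_\nu/h_\nu)H_\nu$ is ample. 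The induced map to $\overline Y_\pi$ is therefore finite. It is also generically injective, because a generic $G$ determines its nonzero roots, hence the occupied blocks and their subsets, hence all $z$'s of the nonempty types. Dimension $\sum_{w_\nu>0}n_\nu=\ell(\boldsymbol m)$ matches Theorem~\ref{thm:dominant-profile-components} when $\ell\le s-1=m$. This gives $\widetilde Y_\pi$, with the lift forgetting the empty factor.

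The step I expect to be hardest is checking that the block parametrization is a genuine morphism on all of $\PP^1$. Over $u=0$ and $u=\infty$, the root labels collide, and the homogeneous expression in $(z_0,z_1)$ must be shown to have no common zero. I would first verify this by writing the factor as $\prod_{c}(X^{h_\nu}-c\,z\,Y^{h_\nu})$ over the $o(\nu)$-orbit constants. It must also be confirmed that generic injectivity really holds over $K(R_I)$, and not merely up to the $H_\pi$ ambiguity.
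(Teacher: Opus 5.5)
Your proposal is correct and follows essentially the paper's proof. The paper likewise parametrizes each block by $\PP^1/C_{h_\nu}$, takes within-type symmetric products to get $\prod_\nu\PP^{n_\nu}$, shows the map is finite and generically injective from a normal source, recovers blocks by grouping roots according to their $d$th powers, and reads $\eta,\xi$ off the degrees $w_\nu/h_\nu$ and $d/h_\nu$ of the block factors; your endpoint check via the grouped factors $X^{h_\nu}-c\,zY^{h_\nu}$ is also how the paper handles $u=0,\infty$. The only variation is that you obtain finiteness from ampleness of the pulled-back $\xi$ (resp.\ $\eta$ on the occupied factors), whereas the paper argues it from finiteness of the power maps and iterated binary multiplication.
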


\begin{proof}
\emph{Relation normalization.}
Parameterize an ordered root block by $[u:v]\in\PP^1$.  Its relation
factor and its selected factor are, respectively,
\begin{equation}\label{v6:norm:single-block}
 v^dX^d-u^dZ^d,
 \qquad
 G_\nu(u,v;X,Z)=\prod_{b\in S_\nu}(vX-\zeta^b uZ).
\end{equation}
Their products, with fixed factors $X^a,X^e$ and complementary
selected factors from $S_\nu^c$, define an $H_\pi$-invariant
morphism $(\PP^1)^m\to\overline{\mathfrak Z}_I$, including
both endpoints.  Quotienting rotations and then within-type
permutations gives the smooth product
\[
 (\PP^1)^m/H_\pi
 \simeq\prod_\nu
       \operatorname{Sym}^{n_\nu}(\PP^1/C_{h_\nu})
 \simeq\prod_\nu\PP^{n_\nu}.
\]
The power maps and the map forgetting the partition into types are
finite, so the map to relation space, and hence to $W_\pi$, is finite.
On the regular locus the quotient identifies exactly this profile's
subsets.  It is therefore finite birational from a normal variety,
proving the first formula in \eqref{v6:norm:product-formulas}.

\emph{Image normalization.}
Empty-block parameters disappear from the selected factor.  For every
nonempty type the map
$\PP^1/C_{h_\nu}\to\PP(S_{w_\nu})$ in
\eqref{v6:norm:single-block} is nonconstant, hence finite onto its image.
On the ordered quotient-root product, iterating the finite
multiplication of Lemma~\ref{lem:binary-multiplication} gives a finite
map to the image.  Passing to the within-type symmetric quotients
retains properness and finite fibers, hence finiteness.
For a general selected factor, partition its nonzero roots by
equality of their $d$th powers.  Each part recovers its necklace
type and, after fixing $S_\nu$, its parameter modulo $C_{h_\nu}$.
Generic injectivity in characteristic zero gives birationality,
proving the second formula
and identifying the lifted projection.  If no block is occupied,
the empty product is a point, with image $[X^e]$.

\emph{Polarizations.}
The subset $S_\nu$ is a union of $C_{h_\nu}$-orbits, so
$h_\nu\mid w_\nu$.  Put $q=d/h_\nu$ and write
$S_\nu=T_\nu+q\{0,\ldots,h_\nu-1\}$ with
$T_\nu\subseteq\mathbb Z/q\mathbb Z$.  Grouping each orbit in
\eqref{v6:norm:single-block} gives the factors
$v^{h_\nu}X^{h_\nu}-\zeta^{t h_\nu}u^{h_\nu}Z^{h_\nu}$,
$t\in T_\nu$.  Their product is a nonzero form at every quotient parameter
$[u^{h_\nu}:v^{h_\nu}]$, including both endpoints, and has parameter
degree $w_\nu/h_\nu$; the relation factor has degree $d/h_\nu$.
The reduced mask $T_\nu$ is aperiodic by maximality of the stabilizer,
and the Fourier coefficients in \eqref{v6:norm:fourier} satisfy
$\lambda_{\nu,h_\nu j}=h_\nu\sum_{t\in T_\nu}(\zeta^{h_\nu})^{tj}$.  Pulling back along
$(\PP^1)^{n_\nu}\longrightarrow\operatorname{Sym}^{n_\nu}\PP^1$
identifies its hyperplane bundle with the exterior product of the
$n_\nu$ copies of $\mathcal O_{\PP^1}(1)$.  The multiplication
formulas therefore give \eqref{v6:norm:polarizations}.  Multiplying
every selected factor by $X^e$ is a linear embedding of factor
spaces and does not change these line bundles.
\end{proof}

\begin{corollary}[What the polarized normalization remembers]
\label{v6:norm:polarized-classification}
For complete progressions, the abstract image normalization is
classified by the unordered list of positive integers $n_\nu$ for
occupied types.  Its isomorphism class with $\eta$ is classified by
the unordered list
\[
 \bigl(n_\nu,w_\nu/h_\nu\bigr).
\]
For dominant profiles, the isomorphism class of the normalization
with the two divisor classes $\eta,\xi$ is classified by
\[
 \bigl(n_\nu,w_\nu/h_\nu,d/h_\nu\bigr).
\]
This last assertion concerns two line bundles, not the two
morphisms defined by their specified linear systems.
\end{corollary}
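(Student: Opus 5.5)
The plan is to reduce all three assertions to one rigidity statement about products of projective spaces, and then read off the invariants from Theorem~\ref{v6:norm:product}. Let $P=\prod_{i=1}^k\PP^{n_i}$ with all $n_i\ge1$, and write $H_i$ for the factor hyperplane classes. The rigidity statement is as follows:
\begin{itemize}[label={}, leftmargin=1.5em]
\item $\operatorname{Pic}P=\bigoplus_i\mathbb Z H_i$.
\item The nef cone of $P$ is simplicial, with extremal rays $\mathbb R_{\ge0}H_i$.
\item Every isomorphism $\phi:P\to P'$ of such products induces a bijection $\sigma$ of factors with $n'_{\sigma(i)}=n_i$ and $\phi^*H'_{\sigma(i)}=H_i$.
\end{itemize}
The argument for the last point is that $\phi^*$ is an isomorphism of Picard lattices carrying nef cones to nef cones. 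It therefore permutes the primitive generators of the extremal rays. The ray spanned by $H_i$ determines the contraction $P\to\PP^{n_i}$, whose fibres are the products of the remaining factors. Comparing fibre dimensions, or equivalently the largest power of the class that is nonzero, gives $n'_{\sigma(i)}=n_i$. Conversely, any permutation of equal-dimensional factors is realized by an automorphism. Hence a tuple of divisor classes $\sum_i c^{(j)}_iH_i$ on $P$ is classified, up to isomorphism of the pair, by the multiset of tuples $(n_i,c^{(1)}_i,c^{(2)}_i,\dots)$.

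Next I plug in Theorem~\ref{v6:norm:product}. The abstract image normalization is $\prod_{\nu:n_\nu>0,\,w_\nu>0}\PP^{n_\nu}$. Every factor has $n_\nu\ge1$, so the rigidity statement gives the first assertion, the multiset of $n_\nu$ over occupied types. By \eqref{v6:norm:polarizations}, the class $\eta$ restricted to $\widetilde Y_\pi$ is $\sum_\nu (w_\nu/h_\nu)H_\nu$, and all these coefficients are positive integers because $h_\nu\mid w_\nu$. No factor is lost by having coefficient zero, so the rigidity statement yields the classification by the multiset of pairs $(n_\nu,w_\nu/h_\nu)$.

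For the third assertion, the key observation is that $\xi$ is a priori only a class on $\widetilde W_\pi$. For a dominant profile, Theorem~\ref{thm:dominant-profile-components} gives $\ell(\boldsymbol m)\ge s-1=m$, using $s=m+1$ from \eqref{v6:norm:full-progression}. Thus every one of the $m$ blocks is occupied and $n_{\nu_\emptyset}=0$. The lift of $p$ forgets only the empty-necklace factor, so it is an isomorphism $\widetilde W_\pi\simeq\widetilde Y_\pi$. We can then transport $\xi=\sum_\nu(d/h_\nu)H_\nu$ to $\widetilde Y_\pi$. Applying the rigidity statement with two classes gives the classification by the triples $(n_\nu,w_\nu/h_\nu,d/h_\nu)$.

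The statement is about classes rather than linear systems. For that reason only $\operatorname{Pic}$ and its nef cone enter the argument; no comparison of the sections defining $p$ or $q$ is needed.

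The step I expect to need the most care is the rigidity claim, specifically that $\phi^*$ sends each $H'_j$ to some $H_i$ exactly, and not merely to a positive multiple of it. My approach is to use that both are primitive generators of extremal rays of the nef cone, so they must correspond exactly. To check that the dimensions match, I would use that $H_i^{n_i}\neq0$ while $H_i^{n_i+1}=0$.
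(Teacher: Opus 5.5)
Your proposal is correct and follows essentially the same route as the paper. In both, dominance forces every block to be occupied, so the two product normalizations coincide. The nef-cone rigidity of products of projective spaces then does the rest: the primitive nef rays are the factor hyperplane classes, isomorphisms permute equal-dimensional factors, and every such permutation is realized. This lets the three classifications be read off the coefficients in \eqref{v6:norm:polarizations}. Your version spells out in detail the rigidity step that the paper states in one line, namely that a lattice isomorphism preserves primitive ray generators and that dimensions are detected by $H_i^{n_i}\ne0=H_i^{n_i+1}$.
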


\begin{proof}
By \eqref{eq:profile-rank}, dominance means all blocks are occupied,
so the two products coincide.  Their primitive nef rays are the factor
hyperplane classes, whose morphisms recover the factor dimensions.
Isomorphisms therefore permute equal-dimensional factors, and every
such permutation is realized.  The coefficients in
\eqref{v6:norm:polarizations} give the three classifications.
\end{proof}

\section{Polarizations and normality of profile images}\label{sec:normality-new}

Normality requires monic parameter recovery and a separate check of
allocations at infinity.

Assume for the moment that the profile in
\eqref{v6:norm:full-progression} is dominant, so $w_\nu>0$ for every
type present.  Remove the fixed factor $X^e$ from the selected form
and put $R=r-e$.  Define the Fourier coefficients
\begin{equation}\label{v6:norm:fourier}
 \lambda_{\nu,k}=\sum_{b\in S_\nu}\zeta^{bk},
 \qquad
 \mathcal D_\nu=\{h_\nu,2h_\nu,\ldots,n_\nu h_\nu\}.
\end{equation}
The coefficient $\lambda_{\nu,k}$ vanishes automatically when
$h_\nu\nmid k$.  Let $Y_\pi^{\rm mon}$ be the affine chart in
$\overline Y_\pi$ on which the remaining selected form is monic.

\begin{theorem}[Monic normality criterion]
\label{v6:norm:affine-criterion}
The affine variety $Y_\pi^{\rm mon}$ is normal if and only if both
of the following conditions hold:
\begin{enumerate}[label=\textup{(\Alph*)},leftmargin=*]
\item the sets $\mathcal D_\nu$ are pairwise disjoint;
\item $\lambda_{\nu,h_\nu j}\ne0$ for all $\nu$ and
      $1\le j\le n_\nu$.
\end{enumerate}
When they hold, $Y_\pi^{\rm mon}\simeq\A^m$, and in particular it
is smooth.
\end{theorem}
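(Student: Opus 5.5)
The plan is to reduce normality of $Y_\pi^{\rm mon}$ to an explicit comparison of two graded polynomial algebras. Then (A) and (B) should come out of a graded Nakayama argument on indecomposables.

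\emph{Step 1 (the monic chart of the normalization).} By Theorem~\ref{v6:norm:product}, and since the profile is dominant, $\widetilde Y_\pi\simeq\prod_\nu\PP^{n_\nu}$ with every factor occupied. Each $\PP^{n_\nu}$ is realized as $\operatorname{Sym}^{n_\nu}(\PP^1/C_{h_\nu})$. Using \eqref{v6:norm:single-block}, the selected form (after removing $X^e$) has a root at infinity exactly when some block parameter has $v=0$. So the inverse image of $Y_\pi^{\rm mon}$ under the finite birational map $\widetilde Y_\pi\to\overline Y_\pi$ is the product of the affine charts $v\ne0$. This product is $\prod_\nu\operatorname{Sym}^{n_\nu}(\A^1)\simeq\A^m$, with coordinates the power sums
\[
P_{\nu,j}=\sum_{i}y_{\nu,i}^{\,j},\qquad 1\le j\le n_\nu,\qquad y_{\nu,i}=u_{\nu,i}^{h_\nu}.
\]
Hence $Y_\pi^{\rm mon}$ is normal if and only if the finite birational map $\A^m\to Y_\pi^{\rm mon}$ is an isomorphism. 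Equivalently, the subalgebra $\mathcal B\subseteq\mathcal C=K[P_{\nu,j}]$ generated by the coefficients of the monic selected form must equal $\mathcal C$.

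\emph{Step 2 (Newton identities and grading).} In characteristic zero the coefficients of the monic degree-$R$ selected form generate the same algebra as its power sums $p_1,\dots,p_R$. Setting $v=1$, the selected roots are $\zeta^b u_{\nu,i}$ with $b\in S_\nu$, so
\[
p_k=\sum_\nu\lambda_{\nu,k}\sum_i u_{\nu,i}^k.
\]
Since $\lambda_{\nu,k}=0$ unless $h_\nu\mid k$, this becomes $p_k=\sum_\nu\lambda_{\nu,k}P_{\nu,k/h_\nu}$. Give $u$ weight one, so that $P_{\nu,j}$ is homogeneous of degree $h_\nu j$. Then $\mathcal B$ and $\mathcal C$ are positively graded, $\mathcal C$ is polynomial, and its space of indecomposables in degree $k$ has basis the $P_{\nu,j}$ with $h_\nu j=k$ and $j\le n_\nu$. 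That is, its dimension is the number of $\nu$ with $k\in\mathcal D_\nu$.

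Note that $h_\nu\mid w_\nu$ and $w_\nu>0$, so $R=\sum n_\nu w_\nu\ge n_\nu h_\nu$. Thus every degree in $\mathcal D_\nu$ is at most $R$. The higher $p_k$, and the terms $P_{\nu,k/h_\nu}$ with $k/h_\nu>n_\nu$, are polynomials in lower-degree generators and so contribute nothing to indecomposables.

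\emph{Step 3 (graded Nakayama).} By graded Nakayama, $\mathcal B=\mathcal C$ if and only if the images of the $p_k$ span $Q(\mathcal C)=\mathcal C_+/\mathcal C_+^2$. In degree $k$ the image of $p_k$ is $\sum_{\nu:\,k\in\mathcal D_\nu}\lambda_{\nu,k}\bar P_{\nu,k/h_\nu}$, a single vector. Spanning therefore forces two things in each degree $k$:
\begin{itemize}
\item at most one $\nu$ has $k\in\mathcal D_\nu$, which is (A);
\item for that $\nu$, $\lambda_{\nu,k}\ne0$, which is (B).
\end{itemize}
Conversely, (A) and (B) give spanning, hence $\mathcal B=\mathcal C$ and $Y_\pi^{\rm mon}\simeq\A^m$, which is smooth.

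\emph{Expected obstacle.} The main obstacle is Step 1. One must justify that the preimage of the monic chart in the normalization is exactly the product of affine charts, so that no monic point has a preimage with a root at infinity. One must also justify that the coordinate ring there is precisely $K[P_{\nu,j}]$ in these $h_\nu$-power variables, which uses the quotient description $(\PP^1)^m/H_\pi$ from the proof of Theorem~\ref{v6:norm:product}. Once this is in place, the rest is the graded-algebra computation above. The standard step that a normal target plus a finite birational map from a normal source gives an isomorphism handles the direction "normal implies equality".
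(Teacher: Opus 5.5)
Your proposal is correct and follows essentially the same route as the paper: identify the monic chart of the normalization as $\prod_\nu\A^{n_\nu}$ with a positively graded polynomial coordinate ring, write the output power sums as $\sum_\nu\lambda_{\nu,k}P_{\nu,k/h_\nu}$, and compare indecomposables weight by weight. The differences are cosmetic: you use the power sums $P_{\nu,j}$ rather than the elementary symmetric functions $e_{\nu,j}$ as polynomial generators (equivalent in characteristic zero), and for sufficiency you invoke graded Nakayama where the paper gives an explicit induction recovering each $e_{\nu,j}$.
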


\begin{proof}
\emph{The actual coefficient algebra.}
The inverse image of this chart in
Theorem~\ref{v6:norm:product} is $\prod_\nu\A^{n_\nu}$.  Write
$\beta_{\nu,i}=u_{\nu,i}^{h_\nu}$ on its ordered monic root cover,
and let $e_{\nu,j}$ and $P_{\nu,j}$ be the elementary symmetric
functions and power sums in the $\beta_{\nu,i}$.  Its coordinate
ring is
\begin{equation}\label{v6:norm:affine-normalization-ring}
 \mathcal B=K[e_{\nu,j}:1\le j\le n_\nu],
 \qquad \deg e_{\nu,j}=h_\nu j.
\end{equation}
The $k$th power sum of the roots of the selected factor is
\begin{equation}\label{v6:norm:output-moments}
 Q_k=\sum_{\nu:h_\nu\mid k}
       \lambda_{\nu,k}P_{\nu,k/h_\nu}.
\end{equation}
Newton's identities in characteristic zero identify the coordinate
ring of its image with
\begin{equation}\label{v6:norm:affine-image-ring}
 \mathcal A=K[Q_1,\ldots,Q_R]\subseteq\mathcal B.
\end{equation}
All higher power sums belong to this ring as well.  By
Theorem~\ref{v6:norm:product}, $\mathcal B$ is the finite
birational normalization of $\mathcal A$.

\emph{Necessity.}
Consider the indecomposables
$\mathcal B_+/(\mathcal B_+)^2$ for the positive grading in
\eqref{v6:norm:affine-normalization-ring}.  Newton's identities give
\[
 P_{\nu,j}\equiv(-1)^{j-1}j e_{\nu,j}
       \pmod{(\mathcal B_+)^2}\quad(j\le n_\nu),
\]
whereas $P_{\nu,j}$ has zero class for $j>n_\nu$.
Products of positive-degree outputs have zero class in this quotient.
At weight $k$, only $Q_k$ can contribute.  It cannot span two
indecomposables of that weight or recover one with zero Fourier
coefficient.  Thus $\mathcal A=\mathcal B$ requires (A) and (B);
normality implies this equality since $\mathcal B$ is its normalization.

\emph{Sufficiency.}
Suppose (A) and (B) hold.  At weight $k=h_\nu j$,
$j\le n_\nu$, formula \eqref{v6:norm:output-moments} has precisely
one indecomposable term, with nonzero coefficient
$(-1)^{j-1}j\lambda_{\nu,k}e_{\nu,j}$.  Its other terms are
polynomials in generators of smaller positive weight.  After these
have been recovered, subtract their polynomial from $Q_k$ and divide
by the nonzero scalar $(-1)^{j-1}j\lambda_{\nu,k}$; this recovers
$e_{\nu,j}$ inside the actual algebra $\mathcal A$.  The induction
uses no parameter denominators and works on the whole chart,
including repeated roots and the total-zero point.
All the needed indices occur in \eqref{v6:norm:affine-image-ring}:
indeed $h_\nu j\le h_\nu n_\nu\le w_\nu n_\nu\le R$.
Hence $\mathcal A=\mathcal B$, proving both the criterion and the
asserted affine-space isomorphism.
\end{proof}

\begin{theorem}[Normality of the projective image]
\label{v6:norm:projective-criterion}
The entire projective image $\overline Y_\pi$ is normal if and only
if (A) and (B) of Theorem~\ref{v6:norm:affine-criterion} hold and,
in addition,
\begin{enumerate}[label=\textup{(\Alph*)},start=3,leftmargin=*]
\item the bounded weighted-sum map
\begin{equation}\label{v6:norm:endpoint-injectivity}
 \prod_\nu\{0,1,\ldots,n_\nu\}\longrightarrow\mathbb Z,
 \qquad (k_\nu)_\nu\longmapsto\sum_\nu w_\nu k_\nu
\end{equation}
is injective.
\end{enumerate}
When these conditions hold, the normalization morphism is an
isomorphism
\[
 \overline Y_\pi\simeq\prod_\nu\PP^{n_\nu}.
\]
Thus normality and smoothness are equivalent in this family.  Here
normality means normality of the variety, not projective normality
of its homogeneous coordinate ring.
\end{theorem}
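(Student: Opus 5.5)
The approach reduces normality of $\overline Y_\pi$ to normality of the finitely many affine charts obtained by choosing which root blocks go to infinity ("endpoint allocations"). Each chart is then compared with the corresponding chart of the product normalization $\prod_\nu\PP^{n_\nu}$ of Theorem~\ref{v6:norm:product}. Normality is local. The normalization morphism $\nu:\prod_\nu\PP^{n_\nu}\to\overline Y_\pi$ is finite and birational, so $\overline Y_\pi$ is normal if and only if $\nu$ is an isomorphism. Since $\nu$ is finite birational onto a variety, this holds if and only if $\nu$ is injective on points and $\mathcal O_{\overline Y_\pi}\to\nu_*\mathcal O$ is surjective at every point.

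First we treat the monic chart. By Theorem~\ref{v6:norm:affine-criterion}, (A) and (B) are equivalent to $\nu$ being an isomorphism over $Y_\pi^{\rm mon}$. Hence (A) and (B) are necessary, and it remains to analyse the boundary, where the selected form (with $X^e$ removed) acquires roots at infinity. In block parameters $[u:v]$, a point of $\PP^{n_\nu}=\operatorname{Sym}^{n_\nu}(\PP^1/C_{h_\nu})$ has some number $k_\nu\in\{0,\ldots,n_\nu\}$ of parameters at $v=0$. The selected factor then has a root of multiplicity $\sum_\nu w_\nu k_\nu$ at infinity.

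Next we prove the necessity of (C). If two distinct tuples $(k_\nu)\ne(k'_\nu)$ have the same weighted sum $D$, consider the image points whose selected form is $Z^{R}$ up to the leading power arrangement. More precisely, we put all non-infinity parameters at $u=0$, which gives the form $X^{R-D}Z^D$. Both tuples map to this same point of $\overline Y_\pi$, but they are distinct points of the product: the strata with $k_\nu$ parameters at $v=0$ and the rest at $u=0$ differ. Thus $\nu$ is not injective, so $\overline Y_\pi$ is not normal (a finite birational map onto a normal variety is an isomorphism by Zariski's main theorem). Some care is needed to check that the two preimage points are genuinely different; the counts $k_\nu$ are invariants of the point in $\prod\PP^{n_\nu}$, so this is fine.

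For sufficiency, assume (A), (B), (C). We cover $\overline Y_\pi$ by the charts where the coefficient of $X^{R-D}Z^{D}$ is the "leading" nonvanishing one. More precisely, near a point with infinity-multiplicity exactly $D$, we work on the open subset of the product where each factor has at most $k_\nu$ parameters at infinity. By (C) this is the unique allocation with $\sum w_\nu k_\nu=D$, so the preimage of the stratum "root at infinity of order $\ge D$" is identified near the point with a single product of affine pieces. We then factor the selected form as $Z^D\cdot(\text{monic part})\cdot(\text{part near infinity})$, using coprimality locally. The near-infinity part, after the substitution $X\leftrightarrow Z$, is a monic-type chart for the sub-profile $(k_\nu)$, and the remaining part is a monic chart for $(n_\nu-k_\nu)$. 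Both sub-profiles inherit (A) and (B), since the sets $\mathcal D_\nu$ only shrink and the Fourier conditions are for smaller $j$. Theorem~\ref{v6:norm:affine-criterion} then applies to each piece, giving local surjectivity of functions and hence $\nu$ a local isomorphism.

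The main obstacle is making this local splitting rigorous scheme-theoretically on $\overline Y_\pi$ itself rather than on the product. One has to show that the coefficients of the image form generate, locally analytically or étale-locally, the coefficients of the two coprime factors. I would do this by Hensel/resultant factorization in the henselization at the point, where coprime factorization of the selected form is algebraic in its coefficients. Then (C) ensures the factor degrees determine the sub-profiles uniquely, so the local pieces are the affine monic charts already handled. The conclusion is $\overline Y_\pi\simeq\prod_\nu\PP^{n_\nu}$, which is smooth; this gives the stated equivalence of normality and smoothness.
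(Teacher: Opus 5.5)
Your plan is essentially the paper's proof. Necessity of (A) and (B) comes from restricting to the monic chart. Necessity of (C) comes from two endpoint allocations with the same monomial output $X^{R-D}Z^D$. Sufficiency goes as follows: (C) recovers the allocation from the multiplicity at infinity; Hensel's lemma splits the form at a boundary point into coprime finite and infinite clusters; and the weighted Newton reconstruction of Theorem~\ref{v6:norm:affine-criterion} is applied to each cluster. The paper works in $\widehat A_y$ and concludes by faithful flatness of completion. Your henselization together with Nakayama's lemma works equally well.

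One justification needs repair. For the cluster at infinity, (B) is not inherited merely because the relevant indices $j$ are smaller. Swapping $X\leftrightarrow Z$ turns $vZ-\zeta^b uX$ into a scalar multiple of $uX-\zeta^{-b}vZ$. So in the reciprocal chart each necklace $S_\nu$ is replaced by $-S_\nu$, and the coefficients entering the Newton recursion are $\lambda_{\nu,-h_\nu j}$, not $\lambda_{\nu,h_\nu j}$. You need the extra observation that these vanish simultaneously, because the automorphism $\zeta\mapsto\zeta^{-1}$ of $\mathbb Q(\zeta)$ exchanges them. Since $w_\nu$ and $h_\nu$ are unchanged, (A) is unaffected.

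You should also state explicitly that the symmetric parameters of the finite and infinite clusters together form local coordinates on $\prod_\nu\PP^{n_\nu}$ at the unique preimage point. This holds because the corresponding divisors on each parameter $\PP^1$ have disjoint support. It is what makes the two separate reconstructions yield a full regular system of parameters, rather than parameters on two unrelated pieces.
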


\begin{proof}
\emph{Necessity and endpoint allocations.}
Conditions (A) and (B) are necessary by restriction to the monic
chart.  If (C) fails, take two different vectors $(k_\nu)_\nu$ with
the same weighted sum.  In the normalization put $k_\nu$ parameters
of type $\nu$ at infinity and all its remaining parameters at zero.
They define distinct normalization points with the same binary
monomial output, whose infinity multiplicity is
$\sum_\nu w_\nu k_\nu$.
A quotient-root parameter at infinity contributes $w_\nu$ output roots.
The finite birational normalization morphism is therefore not an
isomorphism, so its target is not normal.

\emph{Uniqueness on geometric points.}
Suppose that all three conditions hold.  For any selected
form, the multiplicity of its root at infinity and (C) recover
uniquely the number $k_\nu$ of parameters of each type at infinity.
After removing those infinite roots, its finite part comes from the
truncated profile with multiplicities $n_\nu-k_\nu$.  Conditions
(A) and (B) are inherited by every such truncation.  The same
polynomial reconstruction applies, even if the truncated parameters
fall outside the original Krylov inequalities; a zero multiplicity
simply removes a type, and degree zero gives $K$.
It therefore recovers the finite parameters uniquely.  The
normalization morphism is consequently injective on geometric
points.

\emph{Scheme structure at the boundary.}
Write $f:\mathcal N\to\overline Y_\pi$ for the finite normalization.
At a closed point $y$, put $A_y=\mathcal O_{\overline Y_\pi,y}$ and
$B_y=(f_*\mathcal O_{\mathcal N})_y$.  Exact completion gives
$\widehat A_y\hookrightarrow B_y\otimes_{A_y}\widehat A_y
\simeq\prod_{x\in f^{-1}(y)}\widehat{\mathcal O}_{\mathcal N,x}$
\cite[Tags 00MA and 07N9]{Stacks}.  Only the already proved uniqueness
reduces this whole-fiber product to a single factor.

The finite and infinite factors of the output at $y$ are coprime.
After fixing scalar normalizations, multiplication is \emph{\'etale}
there by Lemma~\ref{lem:binary-multiplication}.
Hensel factorization in the completed ambient coefficient ring
therefore supplies both factor coefficient germs in $\widehat A_y$.
Equivalently, the formal inverse of coprime multiplication is
defined on the ambient coefficient completion and then restricts
to the quotient $\widehat A_y$.

For the finite factor, the reconstruction by weighted Newton
identities in Theorem~\ref{v6:norm:affine-criterion} recovers all
the local normalization parameters.  At infinity use the reciprocal
coordinate and the reciprocal necklaces.  Their relevant Fourier
coefficients are $\lambda_{\nu,-h_\nu j}$, which are nonzero if
and only if $\lambda_{\nu,h_\nu j}$ is nonzero: the cyclotomic
automorphism of $\mathbb Q(\zeta)$ sending $\zeta$ to $\zeta^{-1}$
interchanges them; no embedding of $K$ into $\mathbb C$ is required.
The same affine reconstruction applies to the multiplicities
$k_\nu$.  The symmetric parameters of the finite and infinite
clusters are local coordinates on the source product, since their
corresponding divisors on each parameter $\PP^1$ have disjoint
support.  Repeated roots within either factor require no further
splitting.  The reconstruction therefore places in the image of
$\widehat A_y$ a full regular system of parameters for the unique smooth
source factor $\widehat{\mathcal O}_{\mathcal N,x}$.  Since
$B_y\otimes_{A_y}\widehat A_y$ is finite over the complete noetherian local
ring $\widehat A_y$, the quotient by the image of $\widehat A_y$ is a
finite $\widehat A_y$-module and the image is $\mathfrak m_y$-adically
closed.  It contains the polynomial algebra in the recovered parameters and
hence, by completeness, their full power-series algebra.  Thus
\[
 \widehat A_y \xrightarrow{\;\sim\;}
 B_y\otimes_{A_y}\widehat A_y
 \simeq \widehat{\mathcal O}_{\mathcal N,x}.
\]
Now $B_y/A_y$ is a finite $A_y$-module.  Completion is faithfully flat over
the noetherian local ring $A_y$, so the displayed isomorphism gives
$(B_y/A_y)\otimes_{A_y}\widehat A_y=0$ and hence $B_y=A_y$
\cite[Tag 00MC]{Stacks}.  This holds at every closed point, so the coherent
normalization quotient vanishes globally.
\end{proof}

\begin{remark}\label{v6:norm:empty-and-marked}
For a nondominant profile, delete the empty-necklace factor before
applying the affine and projective criteria to its image, and replace
$m$ by its occupancy.  The same proofs apply.  The relation
normalization still has the additional factor of
Theorem~\ref{v6:norm:product}.

Even for a dominant profile, the normal relation morphism
$\widetilde W_\pi\longrightarrow R_I$ need not descend through
$\widetilde Y_\pi\longrightarrow\overline Y_\pi$.  The two
normalizations coincide as varieties in that case, but a boundary
identification in the latter map can join points with different
relations.  Accordingly, a classification retaining the normal
relation morphism is a classification of that marked diagram.
\end{remark}

\subsection{Prime-step consequence}
For a necklace $S\subseteq C_d$, write $M_S(T)=\sum_{a\in S}T^a$.
\begin{corollary}[Prime step]
\label{v6:norm:prime-step}
If $d$ is prime, every homogeneous profile of a nonempty proper
necklace has smooth projective image for every admissible
multiplicity.  More generally a dominant profile has normal image
exactly in the following cases: it contains at most one proper
necklace type, and, if a full necklace occurs together with that
type, the multiplicity of the proper type is less than $d$.
The full-necklace-only case is included whenever admissible.
\end{corollary}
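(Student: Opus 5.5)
The plan is to reduce the corollary to Theorem~\ref{v6:norm:projective-criterion} and to check conditions (A), (B), (C) for prime $d$. The key input is necklace structure when $d$ is prime. The rotation stabilizer of a necklace is a subgroup of $C_d$, so $h_\nu\in\{1,d\}$. Moreover $h_\nu=d$ exactly for the empty and full necklaces. Every nonempty proper necklace therefore has $h_\nu=1$ and $1\le w_\nu\le d-1$, while the full necklace has $h_\nu=d$, $w_\nu=d$, and $\lambda_{\nu,dj}=d\ne0$ in characteristic zero. For a dominant profile no empty type occurs.

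\emph{Condition (B).} For a proper type, $\lambda_{\nu,k}=M_S(\zeta^k)$. When $d\nmid k$, $\zeta^k$ is a primitive $d$th root of unity, whose minimal polynomial over $\mathbb Q$ is $1+T+\cdots+T^{d-1}$. Since $M_S$ has $0/1$ coefficients, degree $<d$, and $S$ is neither empty nor everything, $M_S$ is not a multiple of that cyclotomic polynomial, so $M_S(\zeta^k)\ne0$. When $d\mid k$, $\lambda_{\nu,k}=w_\nu\ne0$. Hence (B) always holds, and the transport to an arbitrary algebraically closed field of characteristic zero is via $\mathbb Q(\zeta)$, as in the proof of Theorem~\ref{v6:norm:projective-criterion}.

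\emph{Condition (A).} Proper types have $\mathcal D_\nu=\{1,\ldots,n_\nu\}$, so two distinct proper types always collide at $1$. A full type has $\mathcal D=\{d,2d,\ldots\}$, which is disjoint from $\{1,\ldots,n\}$ iff $n<d$. So (A) holds exactly when there is at most one proper type and, if a full type is present, the proper multiplicity is less than $d$.

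\emph{Condition (C).} Take at most one proper type of weight $w$ and multiplicity $n$, and possibly a full type of multiplicity $n'$. The map is $(k,k')\mapsto wk+dk'$ on $[0,n]\times[0,n']$. A collision would force $w(k_1-k_2)=d(k_2'-k_1')$. Since $\gcd(w,d)=1$, this gives $d\mid k_1-k_2$, impossible when $n<d$ unless the two vectors are equal. With only one type, or only the full type, injectivity is trivial. Thus (C) is implied by (A)+(B) here, and Theorem~\ref{v6:norm:projective-criterion} gives the stated classification, together with smoothness via $\overline Y_\pi\simeq\prod\PP^{n_\nu}$.

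\emph{Homogeneous case.} A single proper type satisfies (A) and (C) trivially and (B) by the argument above, so its image is smooth for every admissible multiplicity. The full-only case is one type with $\lambda\ne0$ and is covered the same way.

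\emph{Main obstacle.} The delicate point is (B) and its independence of the embedding of $K$. I would prove the non-vanishing in $\mathbb Q(\zeta)$ and then argue that the relevant scalars lie in $\mathbb Q(\zeta)\subset K$.
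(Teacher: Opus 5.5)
Your proposal is correct and follows the paper's own argument essentially step by step. You reduce to Theorem~\ref{v6:norm:projective-criterion}. You get (B) because a $0/1$ polynomial $M_S$ of degree below $d$ with both coefficient values present cannot be divisible by $\Phi_d$. You get (A) from the collision of proper degree sets at $1$ and from comparing $\{1,\ldots,n\}$ with $\{d,2d,\ldots\}$. You then deduce (C) from $\gcd(w,d)=1$ together with $n<d$. The only cosmetic point is that (C) here follows from (A) alone rather than from (A) and (B). Your remark that the nonvanishing of $M_S(\zeta^k)$ takes place in $\mathbb Q(\zeta)\subseteq K$ just makes explicit what the paper leaves implicit.
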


\begin{proof}
For a nonempty proper subset $S$, the polynomial $M_S$ cannot be
divisible by $\Phi_d=1+T+\cdots+T^{d-1}$: its degree is at most
$d-1$, and its coefficients are zero or one with at least one of
each.  Thus all its Fourier coefficients are nonzero.  Every
proper nonempty necklace is aperiodic.  Two distinct proper types
would both have $1$ in their degree sets, violating (A).  For a
single proper type of multiplicity $n$, its degree set is
$\{1,\ldots,n\}$; the full necklace has $h=d$ and degree set
$\{d,2d,\ldots,n_{\rm full}d\}$.  These are disjoint exactly when
$n<d$, provided the full type is present.  Condition (B) holds for
both types.  Finally, if $w$ is the proper weight, then
$1\le w<d$ and $\gcd(w,d)=1$.  Equality
$wk+dl=wk'+dl'$ with $0\le k,k'\le n<d$ forces $k=k'$ and
$l=l'$.  Thus (C) holds as well.  The one-type cases have (C)
automatically, and the result follows from the projective
criterion.
\end{proof}

\section{Concluding remarks and further directions}\label{sec:conclusion-new}
The results above separate two levels of information.  The present paper
determines the embedded divisor scheme, its arithmetic profile branches, and
their normalizations and natural polarizations.  A finer problem is to
determine which functions on a normalization come from the actual profile
image, or equivalently how much of the nonnormal image algebra is remembered
by the normalized geometry.  This problem requires additional information
and lies beyond the scope of the present paper.

The appendices collect the technical material supporting the arguments
above: root and Schur coordinates, fixed-support flatness and postulation,
profile degrees and formal local models, and the canonical factorization and
zero-fiber specializations used in those models.  Each statement keeps its
stated hypotheses; in particular, the profile and monodromy assertions are in
characteristic zero unless explicitly stated otherwise.

Natural continuations include determining which coarser embedded data, short
of the complete divisor scheme, still recover the defining linear system;
understanding arithmetic profiles beyond the characteristic-zero monodromy
regime; and finding intrinsic criteria that recognize the product
normalizations without first choosing a complete progression.
\appendix
\section{Root and Schur coordinates; weighted resolutions}\label{v6:app:coordinates}

The hook coordinates below support Appendix~\ref{app:auxiliary-inputs};
their root-weighted resolutions complement the standard-graded
resolution of Corollary~\ref{cor:projective-resolution}.
Proposition~\ref{v6:lin:minor-isomorphism} is independent of this appendix.

\subsection{Root jets}

Suppose that over a splitting field $L/K$,
\begin{equation}\label{eq:root-factorization}
 g(x)=\prod_{\nu=1}^{m}(x-\lambda_\nu)^{e_\nu},
 \qquad \lambda_\mu\ne\lambda_\nu\ (\mu\ne\nu),
 \qquad \sum_\nu e_\nu=r.
\end{equation}
For the Hasse derivative $D^{(j)}$, one has
$D^{(j)}x^i=\binom{i}{j}x^{i-j}$ for $i\ge j$, and zero for $i<j$.
The root-jet matrix, with the same zero convention, is
\begin{equation}\label{eq:root-jet}
 H_g^{\rm jet}=
 \left(\binom{i}{j}\lambda_\nu^{i-j}\right)_{
 (\nu,j),\ 0\le j<e_\nu;\ 0\le i<n}.
\end{equation}
Hermite--Hasse evaluation in degrees less than $r$ identifies
$H_g^{\rm jet}$ and $H_g^{\rm rem}$ by an invertible row transformation over $L$
\cite[Theorem 3.4]{LiYuanOrbits}.  Ordinary derivatives would lose
multiplicity information when factorials vanish.

\subsection{Rectangular Schur coordinates}

Let $X_1,\ldots,X_r$ be variables, let $s_\kappa(X)$ be the Schur polynomial of a
partition $\kappa$, and use Vieta's isomorphism
\begin{equation}\label{eq:vieta}
 R_r\xrightarrow{\ \sim\ }\mathbb Z[X_1,\ldots,X_r]^{\mathfrak S_r},
 \qquad A_{r-j}\longmapsto(-1)^je_j(X).
\end{equation}
Write $S_\kappa\in R_r$ for the inverse image of $s_\kappa$.  If $g$ has root multiset
$\Lambda_g$, then $S_\kappa(g)=s_\kappa(\Lambda_g)$ in every splitting field.

For $U=\{u_1<\cdots<u_{r-1}\}$ with $u_1\ge1$, put
\begin{equation}\label{eq:kappa-U}
 \kappa(U)=
 \bigl(u_{r-1}-(r-1),u_{r-2}-(r-2),\ldots,u_1-1,0\bigr).
\end{equation}
For an $r$-subset $L=\{\ell_0<\cdots<\ell_{r-1}\}$, set
$U_L=\{\ell_1-\ell_0,\ldots,\ell_{r-1}-\ell_0\}$.  The rectangular
Schur--Pl\"ucker theorem of \cite[Theorem 3.3]{LiYuanSchur} states
\begin{equation}\label{eq:prior-schur-plucker}
 \det(Q_{\ell_0},\ldots,Q_{\ell_{r-1}})
 =\bigl((-1)^rA_0\bigr)^{\ell_0}S_{\kappa(U_L)}.
\end{equation}
For $L\subseteq\{0,\ldots,n-1\}$, the reduced partitions are exactly those in the rectangle
$(n-r)^{r-1}$.  This coordinate identity does not assert irreducibility or
pairwise coprimality of the Schur factors.

The row span of $H_g^{\rm rem}$ defines a morphism
\begin{equation}\label{eq:grassmann-map}
 \Phi_{n,r}:\A_{\mathbb Z}^r\longrightarrow\Gr(r,n).
\end{equation}
The first $r$ columns form the identity, so $\Phi_{n,r}$ lands in the standard big cell.
It is a closed immersion into that cell \cite[Proposition 2.4]{LiYuanSchur}.
Indeed, $n>r$ and
$Q_r=(-A_0,\ldots,-A_{r-1})^{\mathsf T}$ recover all
coefficient parameters, while the remaining columns are
polynomial in them.
We shall pull back coordinate rank loci of every support size along the same map.

\subsection{Hook--Schur matrices and determinantal resolutions}\label{sec:schur}

\subsubsection{A universal hook remainder formula}

Let $h_m=s_{(m)}$ be the complete homogeneous symmetric function, with $h_0=1$ and
$h_m=0$ for $m<0$.  The coefficient convention \eqref{eq:intro-universal} gives
\begin{equation}\label{eq:h-generating}
 \sum_{m\ge0}h_mT^m=
 \frac{1}{1+A_{r-1}T+A_{r-2}T^2+\cdots+A_0T^r}.
\end{equation}

\begin{lemma}[Hook remainder identity]\label{lem:hook-remainder}
For $m\ge1$ and $0\le b\le r-1$,
\begin{equation}\label{eq:hook-remainder}
 q_{b,r-1+m}=(-1)^{r-1-b}S_{(m,1^{r-1-b})}.
\end{equation}
Equivalently, in $R_r[x]/(G)$,
\begin{equation}\label{eq:hook-polynomial-remainder}
 x^{r-1+m}\equiv
 \sum_{b=0}^{r-1}(-1)^{r-1-b}S_{(m,1^{r-1-b})}x^b.
\end{equation}
The identity is integral and valid after arbitrary base change.
\end{lemma}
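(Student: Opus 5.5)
The plan is to work with generating functions in $R_r[[T]]$ and the reversed remainder polynomial. Put $\tilde G(T)=T^rG(1/T)=1+A_{r-1}T+\cdots+A_0T^r$. By \eqref{eq:h-generating}, $\sum_m h_mT^m=1/\tilde G(T)$, so the sequence $h_m$ satisfies the reversed recurrence \eqref{eq:Q-recurrence}:
\[
h_m+A_{r-1}h_{m-1}+\cdots+A_0h_{m-r}=0
\]
for $m\ge1$, with $h_0=1$ and $h_{<0}=0$. Since $Q_i$ satisfies the same linear recurrence in $i$, each coordinate $q_{b,i}$ is a combination of shifts of $h$. The first step is to make this explicit. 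The generating function $\sum_i Q_i x^{-i}$ (or equivalently $\sum_i (x^i\bmod G)\,T^i$) is computed from $\sum_i x^iT^i=1/(1-xT)$ reduced modulo $G$. The standard identity
\[
\frac{1}{1-xT}\equiv\frac{\sum_{b}x^b\,T^{\,r-1-b}\,\tilde G_{>r-1-b}(T)\cdot(\text{sign})}{\tilde G(T)}
\]
should give
\[
q_{b,i}=\sum_{k=0}^{b}A_{\cdot}\,h_{i-\cdot}\quad\text{(truncated Horner form)},
\]
namely $q_{b,i}=-\sum_{j\le b}A_j h_{i-r-b+j}$ up to the sign convention. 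I would verify the initial values $Q_i=e_i$ for $i<r$ directly from $h_0=1$ and $h_{<0}=0$.

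The second step is to identify this combination with a hook Schur function. Here I would use the Jacobi--Trudi determinant for $\kappa=(m,1^{c})$ with $c=r-1-b$. It is a $(c+1)\times(c+1)$ determinant with first row $h_m,h_{m+1},\ldots,h_{m+c}$ and the remaining rows $h_0,h_1,\ldots$ shifted, i.e.\ entries $h_{1-i+j}$ below. Alternatively, use the dual form via $e_j$: expanding along the first row and using $\det(h_{1-i+j})_{k\times k}=e_k$ gives
\[
s_{(m,1^c)}=\sum_{k=0}^{c}(-1)^k h_{m+k}\,e_{c-k}.
\]
Substituting $e_j=(-1)^jA_{r-j}$ from \eqref{eq:vieta} turns this into $(-1)^c\sum_k A_{r-c+k}h_{m+k}$ with appropriate indexing. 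The third step then matches this with the expression for $q_{b,r-1+m}$ from the first step, using the recurrence to convert between the "lower" sum $\sum_{j\le b}A_jh_{\cdots}$ and the complementary "upper" sum $\sum_{j>b}A_jh_{\cdots}$. The two differ by a full recurrence relation, which vanishes for $m\ge1$.

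I expect the main obstacle to be bookkeeping: getting the indices and signs consistent between the remainder generating function and the hook expansion. I would check the cases $b=r-1$ (where $c=0$, so $q_{r-1,r-1+m}=h_m$, which follows by inducting on the recurrence) and $m=1$ (where $q_{b,r}=-A_b$, and indeed $s_{(1,1^{c})}=e_{c+1}=(-1)^{c+1}A_{b}$, giving $(-1)^cS=-A_b$) to fix the conventions. Integrality and base change are automatic, since every identity used is a polynomial identity over $\mathbb Z$ in $R_r$ via \eqref{eq:vieta}. The polynomial form \eqref{eq:hook-polynomial-remainder} is just a restatement of \eqref{eq:hook-remainder} via \eqref{eq:intro-H}.
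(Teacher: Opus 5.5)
Your proposal is correct, and it follows a different route from the paper's main argument.

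\emph{The paper's route.} It works in the splitting ring. For algebraically independent roots $X_1,\dots,X_r$, expanding the alternant with exponents $0,1,\dots,r-1,r-1+m$ along the row of a repeated root shows that the cofactor of $X_i^b$ is $\Delta\cdot s_{(m,1^{r-1-b})}$. This gives $X_i^{r-1+m}=\sum_b(-1)^{r-1-b}s_{(m,1^{r-1-b})}X_i^b$. The difference between the true remainder and the claimed one then has degree $<r$ and vanishes at the $r$ distinct generic roots, so it is zero. Vieta's isomorphism descends the identity to $R_r$. The hook Jacobi--Trudi expansion you use appears in the paper only as an ``equivalently'' aside, and even there it is combined with evaluation at roots.

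\emph{Your route.} You never leave $R_r$. You compute every remainder coefficient explicitly from $1/\tilde G(T)$ and match coefficientwise with $(-1)^c\sum_k A_{r-c+k}h_{m+k}$. This needs no interpolation or Vandermonde argument, and it yields a closed formula for all $q_{b,i}$ as a by-product. The paper's proof is shorter and avoids the index bookkeeping, because the hook shapes appear directly as cofactors.

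\emph{A bookkeeping correction.} In your displayed generating function the power should be $T^{b}$, not $T^{r-1-b}$. Also, the lower-truncation formula $q_{b,i}=-\sum_{j\le b}A_jh_{i-r-b+j}$ holds only for $i\ge r$: for $i<r$ it returns $0$, so the check against $Q_i=e_i$ you propose would fail. Solving the recurrence with the initial values $Q_i=e_i$ ($i<r$) gives directly
\[
 \sum_{i\ge0}q_{b,i}T^i=\frac{\sum_{j=b+1}^{r}A_jT^{\,r-j+b}}{\tilde G(T)},\qquad
 q_{b,i}=\sum_{j=b+1}^{r}A_jh_{i-b-r+j}\quad(i\ge0).
\]
At $i=r-1+m$ this is already $\sum_{k=0}^{r-1-b}A_{b+1+k}h_{m+k}=(-1)^{r-1-b}S_{(m,1^{r-1-b})}$, so your third step (converting lower to upper sums) can be skipped. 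Since the lemma only concerns $i=r-1+m\ge r$, none of this affects correctness.
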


\begin{proof}
For algebraically independent roots, bialternant expansion gives
\begin{equation}\label{eq:hook-evaluation-identity}
 X_i^{r-1+m}=
 \sum_{b=0}^{r-1}(-1)^{r-1-b}
 s_{(m,1^{r-1-b})}(X_1,\ldots,X_r)X_i^b
 \qquad(1\le i\le r).
\end{equation}
Equivalently, use Jacobi--Trudi,
$s_{(m,1^j)}=\sum_{v=0}^j(-1)^vh_{m+v}e_{j-v}$, and the root recurrence.
The resulting remainder vanishes at every $X_i$ and hence is zero.
Its coefficients lie in the integral symmetric-polynomial ring, so
Vieta's isomorphism \eqref{eq:vieta} proves the identity over $R_r$
and after arbitrary base change.
\end{proof}

The case $m=1$ reads $x^r\equiv-\sum_{b=0}^{r-1}A_bx^b$, because
$S_{(1^{r-b})}=e_{r-b}=(-1)^{r-b}A_b$.

\subsubsection{Elimination of standard columns}

Fix a nonempty block and use the notation $A,U,B$ of \eqref{eq:intro-AUB}.  Put
$a_0=|A|$ and $u_I=|U|$, so $a_0+u_I=s$ and $|B|=r-a_0$.  Define
\begin{equation}\label{eq:MI-definition}
 M_I=(q_{b,u})_{b\in B,\,u\in U}.
\end{equation}

\begin{proposition}[Hook-Schur reduction]\label{prop:hook-reduction}
After row and column permutations and elementary column operations, $H_I$ has block form
\begin{equation}\label{eq:block-reduction}
 \begin{pmatrix}I_{a_0}&0\\0&M_I\end{pmatrix}.
\end{equation}
Consequently,
\begin{equation}\label{eq:ideal-reduction}
 J_I^{(s)}=I_{u_I}(M_I),
\end{equation}
and its entries are
\[
 (M_I)_{b,u}=(-1)^{r-1-b}S_{(u-r+1,1^{r-1-b})}
 \qquad(b\in B,\ u\in U).
\]
\end{proposition}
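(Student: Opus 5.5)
The plan is to exploit \eqref{eq:remainder-recurrence}: for a low index $i\in A$ the column $Q_i$ is the standard basis vector $e_i$. First permute the columns of $H_I$ so that those indexed by $A$ come first, and permute the rows so that the rows indexed by $A$ come first and those indexed by $B$ follow. In the new order the first $a_0$ columns form $\binom{I_{a_0}}{0}$. The remaining columns are $Q_u$ for $u\in U$. Their top block is $(q_{b,u})_{b\in A}$ and their bottom block is $(q_{b,u})_{b\in B,u\in U}=M_I$.

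Next, subtract $q_{b,u}$ times the column $e_b$, for each $b\in A$, from each column $Q_u$. This kills the top-right block. These are elementary column operations over $R_r$ with coefficients in $R_r$, and they leave the bottom block untouched. We therefore reach the block form \eqref{eq:block-reduction}. Everything here is integral, so it holds after any base change.

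For the ideal statement, recall that ideals of minors $I_j$ are invariant under row and column permutations and under elementary operations, since multiplying by invertible matrices preserves $I_j$. It then suffices to show
\[
 I_s\begin{pmatrix}I_{a_0}&0\\0&M_I\end{pmatrix}=I_{u_I}(M_I)\qquad\text{with }s=a_0+u_I.
\]
The inclusion $\supseteq$ comes from bordering any $u_I$-minor of $M_I$ with the full identity block. For $\subseteq$, take any $s$-minor of the block matrix and expand it by Laplace along the identity columns it uses. If it omits $j\ge1$ of the identity columns, it must use $u_I+j>u_I$ columns from the $M_I$ block, but that block has only $u_I$ columns, so this is impossible. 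Hence every nonzero $s$-minor uses all identity columns and rows, and it equals a $u_I$-minor of $M_I$ up to sign. The case $u_I=0$ is handled by the convention $I_0=(1)$.

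Finally, the entries of $M_I$ come from Lemma~\ref{lem:hook-remainder}. For $u\in U$ we have $u\ge r$, so we may write $u=r-1+m$ with $m=u-r+1\ge1$. Then $q_{b,u}=(-1)^{r-1-b}S_{(u-r+1,1^{r-1-b})}$. The only point requiring care is the minor-ideal bookkeeping, that is, confirming that no mixed minors survive. That is the counting argument above, so I expect no real obstacle.
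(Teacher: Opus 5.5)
Your proposal is correct and follows the paper's proof essentially verbatim: put the standard columns $e_a$ ($a\in A$) and their pivot rows first, clear the $A$-rows of each $Q_u$ by elementary column operations, observe that every nonzero $s$-minor contains all pivot rows, and read off the entries from Lemma~\ref{lem:hook-remainder}. One small fix is needed in the counting step, where you have interchanged rows and columns. Since $H_I$ is $r\times s$, an $s$-minor always uses every column, so the relevant question is which rows it uses. If it omits a pivot row $a\in A$, then the column $e_a$ vanishes on the chosen rows. Equivalently, the $u_I+j$ chosen $B$-rows are supported on only $u_I$ columns, so the minor is zero.
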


\begin{proof}
Move the standard columns $Q_a=e_a$ and their pivot rows first, then
subtract $\sum_{a\in A}q_{a,u}e_a$ from each $Q_u$.  This gives
\eqref{eq:block-reduction}.  Every nonzero $s$-minor contains all pivot
rows and expands to a maximal minor of $M_I$; all such minors occur.
Lemma~\ref{lem:hook-remainder} supplies the entries.
\end{proof}

Minimal graded resolutions use $M_I$; $H_I$ has unit entries when
$A\ne\varnothing$.

\subsubsection{Schur augmentation on the invertible locus}

First suppose that $0\in I$.  For a subset $C\subseteq B$ with $|C|=r-s$, the union
$I\cup C$ has cardinality $r$.  Let $\kappa((I\cup C)\setminus\{0\})$ be the partition associated to this
reduced exponent set by \eqref{eq:kappa-U}.

\begin{theorem}[Schur augmentation]\label{thm:schur-augmentation}
If $0\in I$, then
\begin{equation}\label{eq:schur-augmentation}
 J_I^{(s)}=
 \bigl(S_{\kappa((I\cup C)\setminus\{0\})}:
 C\subseteq B,\ |C|=r-s\bigr).
\end{equation}
For an arbitrary $I$, put $i_0=\min I$ and $I'=I-i_0$.  Then
\begin{equation}\label{eq:schur-localization}
 J_I^{(s)}R_r[A_0^{-1}]
 =J_{I'}^{(s)}R_r[A_0^{-1}],
\end{equation}
so the right-hand side of \eqref{eq:schur-augmentation}, formed with $I'$, gives the defining
ideal on $D(A_0)$.  Equivalently,
\begin{equation}\label{eq:schur-saturation}
 (J_I^{(s)}:A_0^\infty)=(J_{I'}^{(s)}:A_0^\infty),
\end{equation}
and $J_{I'}^{(s)}$ is the displayed Schur ideal because $0\in I'$.
\end{theorem}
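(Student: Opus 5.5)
The first claim \eqref{eq:schur-augmentation} will follow from the Laplace-type reduction already available.  With $0\in I$, the column $Q_0=e_0$ lies in $H_I$.  Every $s$-minor of $H_I$ is a polynomial combination of maximal minors of an $r\times r$ augmentation: for an $s$-subset $\rho$ of rows, adjoin to $H_I$ the standard columns $e_b$ for the $r-s$ rows $b\notin\rho$.  The determinant of $(H_I\mid e_b:b\notin\rho)$ equals, up to sign, the $s$-minor of $H_I$ on the rows $\rho$.  Adjoining $e_b$ with $b\in A$ duplicates a column of $H_I$ and gives zero, so the nonzero minors correspond exactly to $\rho\supseteq A$.  I will prefer the slicker route through Proposition~\ref{prop:hook-reduction}: every nonzero $s$-minor contains the pivot rows $A$.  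The complementary row set therefore has the form $C\subseteq B$ with $|C|=r-s$, and the minor equals $\pm\det(Q_i:i\in I\cup C)$, since $Q_c=e_c$ for $c\in B\subseteq\{0,\dots,r-1\}$.  Applying \eqref{eq:prior-schur-plucker} with $L=I\cup C$ and $\ell_0=0$ gives the prefactor $((-1)^rA_0)^0=1$, so each minor is $\pm S_{\kappa((I\cup C)\setminus\{0\})}$.  Conversely, every such $C$ gives a minor of $H_I$, and units do not change the ideal.

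For the localization \eqref{eq:schur-localization}, I will use the recurrence $Q_{i}=T_G^{i}Q_0$ from \eqref{eq:remainder-recurrence}.  Hence $H_I=T_G^{i_0}H_{I'}$.  Since $\det T_G=(-1)^rA_0$ (companion matrix), $T_G$ is invertible over $R_r[A_0^{-1}]$.  The $s$-minor ideal is invariant under left multiplication by an invertible matrix: Cauchy--Binet gives one inclusion, and the inverse gives the other.  This proves \eqref{eq:schur-localization}.

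The saturation statement \eqref{eq:schur-saturation} then follows formally.  For ideals $J,J'$ of the domain-free ring $R_r$, the equality $JR_r[A_0^{-1}]=J'R_r[A_0^{-1}]$ is equivalent to $(J:A_0^\infty)=(J':A_0^\infty)$, because both sides are the contraction of the common extension.  The final sentence is the first part applied to $I'$, which contains $0$.

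The only step requiring care is the sign and index bookkeeping in matching the minor on rows complementary to $C$ with $\det(Q_L)$ in the increasing order used by \eqref{eq:prior-schur-plucker}.  Signs are irrelevant for ideals, so this is not a real obstacle.  I expect the main point to be checking that $B\cap I=\varnothing$, so that $I\cup C$ really has $r$ elements; this holds because $B$ consists of low indices missing from $I$.
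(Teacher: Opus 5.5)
Your proposal is correct and follows essentially the paper's argument: Laplace expansion of $\det(Q_i:i\in I\cup C)$ along adjoined standard columns plus the Schur--Pl\"ucker formula at $\ell_0=0$ for the first part, and $H_I=T_G^{i_0}H_{I'}$ with $\det T_G=(-1)^rA_0$ and Cauchy--Binet (the paper's invertible $s$th compound matrix) for the localization. The differences are cosmetic. The paper expands along $A\cup C$ and passes through the maximal minors of $M_I$ via Proposition~\ref{prop:hook-reduction}, whereas you expand along $C$ alone and read off the $s$-minors of $H_I$ directly; your contraction argument for the saturation form correctly fills in a step the paper leaves implicit.
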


\begin{proof}
Expanding the determinant on $I\cup C$ along the standard columns
$A\cup C$ gives the maximal minor of $M_I$ on $B\setminus C$, up to sign.
All maximal minors occur.  Since $0\in I$, \eqref{eq:prior-schur-plucker}
identifies these determinants with the displayed Schur generators;
Proposition~\ref{prop:hook-reduction} proves \eqref{eq:schur-augmentation}.

For general $I$, the Krylov identity gives
$H_I=T_G^{i_0}H_{I'}$.  Since
$\det T_G=(-1)^rA_0$, the matrix $T_G^{i_0}$ and its $s$th compound matrix are invertible after
localizing at $A_0$.  The vectors of $s$-minors therefore differ by an invertible linear
transformation, proving \eqref{eq:schur-localization}.
\end{proof}

Theorem~\ref{thm:schur-augmentation} shows that the maximal Schur--Pl\"ucker system
of \cite{LiYuanSchur} contains every smaller column-rank locus through structured
subfamilies of the same rectangular coordinates.

\subsubsection{Squarefree exterior coordinates}

There is a complementary description over a splitting field.  Suppose
$g=\prod_{\nu=1}^r(x-\lambda_\nu)$ with nonzero pairwise distinct roots.  For an
$s$-element exponent set $I=\{i_0<\cdots<i_{s-1}\}$, put
\begin{equation}\label{eq:kappa-exterior}
 \kappa_s(I)=
 \bigl(i_{s-1}-i_0-(s-1),\ldots,i_1-i_0-1,0\bigr).
\end{equation}

\begin{proposition}[Subset-Schur criterion]\label{prop:subset-schur}
For every $s$-element root subset $J\subseteq\{1,\ldots,r\}$,
\begin{equation}\label{eq:subset-schur-minor}
 \det(\lambda_\nu^{i_j})_{\nu\in J,\,0\le j<s}
 =\left(\prod_{\nu\in J}\lambda_\nu^{i_0}\right)
 \Delta(\Lambda_J)s_{\kappa_s(I)}(\Lambda_J),
\end{equation}
up to the fixed sign determined by the row ordering.  Consequently,
\begin{equation}\label{eq:subset-schur-rank}
 \rank H_I<s
 \quad\Longleftrightarrow\quad
 s_{\kappa_s(I)}(\Lambda_J)=0
 \quad\text{for every }J, |J|=s.
\end{equation}
\end{proposition}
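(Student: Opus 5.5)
The determinant identity \eqref{eq:subset-schur-minor} is the classical bialternant formula, and I would prove it by factoring out the lowest power and then recognising a generalized Vandermonde determinant. Row $\nu$ of the matrix $(\lambda_\nu^{i_j})$ contains the common factor $\lambda_\nu^{i_0}$. Extracting it from each row leaves $\det(\lambda_\nu^{i_j-i_0})_{\nu\in J,\,0\le j<s}$. The exponents $0=i_0-i_0<i_1-i_0<\cdots<i_{s-1}-i_0$ are strictly increasing. Reversing the column order introduces only a fixed sign. After that reversal the exponents read $\kappa_s(I)_k+(s-1-k)$ for $k=0,\ldots,s-1$, by the definition \eqref{eq:kappa-exterior}.

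Write $a_{\kappa+\delta}$ for the alternant. The bialternant identity $a_{\kappa+\delta}=a_\delta\,s_\kappa$ holds over $\mathbb Z[X_1,\ldots,X_s]$, and $a_\delta$ is the Vandermonde $\Delta$. Specializing $X$ to $\Lambda_J$ then gives \eqref{eq:subset-schur-minor}. The sign depends only on the chosen row ordering and the column reversal, not on $g$. Because the identity is integral, this specialization is valid over any splitting field.

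For \eqref{eq:subset-schur-rank}, I would replace $H_I$ by an evaluation matrix. Since $g$ is squarefree, the root-jet matrix \eqref{eq:root-jet} has $e_\nu=1$ for every $\nu$. It is therefore the plain evaluation matrix $(\lambda_\nu^i)$. By the Hermite--Hasse identification cited from \cite[Theorem 3.4]{LiYuanOrbits}, this matrix differs from $H_g^{\rm rem}$ by an invertible row transformation over $L$. Restricting to the columns in $I$ preserves this, so $\rank H_I$ equals the rank of $(\lambda_\nu^{i})_{\nu\le r,\,i\in I}$.

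This matrix has $s$ columns, so its rank is less than $s$ exactly when every $s\times s$ minor vanishes. Those minors are indexed by $s$-subsets $J$ of the rows. Each is given by \eqref{eq:subset-schur-minor}. The prefactor $\prod_{\nu\in J}\lambda_\nu^{i_0}$ is nonzero because the roots are nonzero. The factor $\Delta(\Lambda_J)$ is nonzero because the roots are distinct. Hence each minor vanishes if and only if $s_{\kappa_s(I)}(\Lambda_J)=0$, which proves the equivalence.

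The only step that needs care is the reduction from $H_I$ to the evaluation matrix. One must check that the row transformation in \cite[Theorem 3.4]{LiYuanOrbits} acts on all $n$ columns simultaneously, so that column selection commutes with it. I would verify this directly. The remainder map $K[x]_{<n}\to K[x]/(g)$, followed by the Chinese-remainder isomorphism $K[x]/(g)\simeq L^r$ given by evaluation at the roots, sends $x^i$ to $(\lambda_\nu^i)_\nu$. The evaluation step is an invertible linear map independent of $i$, so column selection commutes with it.
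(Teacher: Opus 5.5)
Your proposal is correct and follows the paper's proof: factor $\lambda_\nu^{i_0}$ out of each row, apply the bialternant formula, use that evaluation at the distinct roots is an invertible map preserving $\rank H_I$, and note that the prefactors are nonzero. Your explicit Chinese-remainder check that column selection commutes with the evaluation map spells out the paper's one-line remark that ``root evaluation preserves rank.''
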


\begin{proof}
Root evaluation preserves rank.  Factoring $\lambda_\nu^{i_0}$ from
each row and applying the bialternant formula proves
\eqref{eq:subset-schur-minor}.  The prefactors are nonzero because the
roots are distinct and nonzero; vanishing of all $s$-minors proves the criterion.
\end{proof}

For $s=r-1$, the coordinates in \eqref{eq:subset-schur-rank} are the deleted-root values
$s_{\kappa} (\Lambda_g\setminus\{\lambda_\nu\})$.  For $s=r$, the exterior space is
one-dimensional and the vector collapses to the single Schur coordinate in
\eqref{eq:prior-schur-plucker}.

\subsubsection{The codimension-two Hilbert--Burch theory}

Take $s=r-1$ and let $u_I=|U|\ge1$.  Since
$a_0+u_I=r-1$, the set $B$ has cardinality $u_I+1$.  For $b\in B$, let $M_I^{\widehat b}$
be obtained by deleting row $b$ and define the signed maximal minor
\begin{equation}\label{eq:Fb}
 F_b=\epsilon_b\det M_I^{\widehat b},
\end{equation}
where the signs are chosen so that $(F_b)_{b\in B}M_I=0$.

Give $R_r$ the positive root-weight grading
\begin{equation}\label{eq:root-weight}
 \rwt(A_{r-j})=j\qquad(1\le j\le r).
\end{equation}
Since Schur functions have root weight equal to their partition size,
\begin{equation}\label{eq:entry-weight}
 \rwt(q_{b,u})=u-b.
\end{equation}
Put $D_I=\sum_{u\in U}u-\sum_{b\in B}b$.

\begin{theorem}[Weighted Schur--Hilbert--Burch resolution]
\label{thm:HB}\label{cor:weighted-HB}
The height-two ideal $J_I^{(r-1)}$ is generated minimally by the $u_I+1$
signed maximal minors $F_b$, of root weights
\begin{equation}\label{eq:F-weight}
 \rwt(F_b)=D_I+b.
\end{equation}
Its integral Hilbert--Burch resolution is
\begin{equation}\label{eq:HB-ungraded}
 0\longrightarrow R_r^{u_I}\xrightarrow{\ M_I\ }R_r^{u_I+1}
 \xrightarrow{\ (F_b)\ }J_I^{(r-1)}\longrightarrow0,
\end{equation}
and its homogeneous form is
\begin{equation}\label{eq:weighted-HB}
 0\longrightarrow\bigoplus_{u\in U}R_r(-(D_I+u))
 \xrightarrow{\ M_I\ }
 \bigoplus_{b\in B}R_r(-(D_I+b))
 \longrightarrow J_I^{(r-1)}\longrightarrow0.
\end{equation}
The field fibers are minimal.  In particular,
\begin{equation}\label{eq:HB-Hilbert-series}
 \Hilb_{R_{r,K}/J_{I,K}^{(r-1)}}(z)=
 \frac{1-\displaystyle\sum_{b\in B}z^{D_I+b}
 +\displaystyle\sum_{u\in U}z^{D_I+u}}
 {\displaystyle\prod_{j=1}^r(1-z^j)}.
\end{equation}
If $0\in I$, then globally
\begin{equation}\label{eq:Fb-schur}
 F_b=\pm S_{\kappa((I\cup\{b\})\setminus\{0\})}.
\end{equation}
\end{theorem}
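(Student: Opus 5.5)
The plan is to first place the statement in the Hilbert--Burch setting and then read off the weights. Here $s=r-1$, so $h=r-s+1=2$. By Proposition~\ref{prop:hook-reduction}, $J_I^{(r-1)}=I_{u_I}(M_I)$, where $M_I$ is the $(u_I+1)\times u_I$ matrix $(q_{b,u})_{b\in B,u\in U}$ over $R_r$.

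\emph{Height and exactness.} I would obtain the height from Theorem~\ref{thm:expected-codimension}, or equivalently from the affine chart of Theorem~\ref{v6:lin:uniform}: on every field fibre the ideal has the expected height $h=2$. The Hilbert--Burch theorem (equivalently, Eagon--Northcott with $h=2$) then gives exactness of
\[
0\to R_r^{u_I}\xrightarrow{M_I}R_r^{u_I+1}\xrightarrow{(F_b)}J\to0.
\]
I would check this integrally by the same argument used in Theorem~\ref{v6:lin:uniform}. Grade stays $2$ after every base change to a field. Over the domain $R_r$, the Buchsbaum--Eisenbud criterion needs $\operatorname{grade}I_{u_I}(M_I)\ge2$ and $I_{u_I+1}=0$ trivially. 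For the grade over $\mathbb Z$, I would use that $R_r/J$ is flat over $\mathbb Z$. Flatness follows from fibrewise exactness of the Eagon--Northcott complex, by the cancellation argument in Theorem~\ref{v6:lin:uniform}. The signs $\epsilon_b$ are the usual Cramer signs, so $(F_b)M_I=0$ is the Laplace expansion of a determinant with a repeated column.

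\emph{Weights.} By Lemma~\ref{lem:hook-remainder}, $q_{b,u}$ is $\pm$ a Schur polynomial of size $(u-r+1)+(r-1-b)=u-b$. Since Schur functions are root-weight homogeneous, this gives \eqref{eq:entry-weight}. For the $q_{b,u}$ with $u<r$, which are $0$ or $1$, there is no issue, since $U$ consists only of high columns. Every term of $\det M_I^{\widehat b}$ pairs $U$ bijectively with $B\setminus\{b\}$. Its weight is therefore $\sum_U u-\sum_{B\setminus b}b'=D_I+b$. Assigning $R_r(-(D_I+b))$ to row $b$ and $R_r(-(D_I+u))$ to column $u$ makes $M_I$ homogeneous of degree zero, because $(D_I+u)-(D_I+b)=u-b$. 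This gives \eqref{eq:weighted-HB}.

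\emph{Minimality, Hilbert series, Schur form.} Minimality on field fibres means that no entry of $M_I$ is a nonzero constant. Each entry has weight $u-b\ge u-(r-1)\ge1$, because $u\ge r>b$. So all entries lie in the irrelevant ideal, and the $F_b$ are minimal generators. The Hilbert series follows from the resolution together with $\Hilb_{R_{r,K}}=\prod_j(1-z^j)^{-1}$, because $\rwt(A_{r-j})=j$. For $0\in I$, I would apply Theorem~\ref{thm:schur-augmentation} with $|C|=r-s=1$, so $C=\{b\}$. In that proof the determinant on $I\cup\{b\}$ expands along the standard columns to $\pm\det M_I^{\widehat b}$, and \eqref{eq:prior-schur-plucker} with $\ell_0=0$ identifies it with $S_{\kappa((I\cup\{b\})\setminus\{0\})}$.

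The main obstacle is the integral statement: exactness over $\mathbb Z$ rather than only over fields, and the correct grade of $J$ in $R_r$. I would handle it through the base-flatness argument of Theorem~\ref{v6:lin:uniform} in the affine monic setting. If that proves awkward, I would use Buchsbaum--Eisenbud directly, noting that a height-two ideal in the Cohen--Macaulay ring $R_r$ has grade two once its height is $2$ integrally. The integral height follows from the fibrewise heights together with flatness.
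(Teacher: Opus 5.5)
Your proposal is correct and follows essentially the same route as the paper: height two from the fixed-support codimension and flatness results, the Hilbert--Burch presentation via the hook reduction $J_I^{(r-1)}=I_{u_I}(M_I)$, the weight count $\sum_{u\in U}u-\sum_{c\in B\setminus\{b\}}c=D_I+b$ with positive shift differences $u-b$ giving minimality and the Hilbert series, and the Schur form from Schur augmentation with $C=\{b\}$. The only difference is that you re-derive the integral grade and exactness (via fibrewise heights and base-flatness), whereas the paper simply cites Theorem~\ref{thm:CM-flat} for that step.
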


\begin{proof}
Theorems~\ref{thm:CM-flat} and \ref{thm:schur-augmentation}, together
with Proposition~\ref{prop:hook-reduction}, give height two, the
Hilbert--Burch presentation, and \eqref{eq:Fb-schur}.
Deleting row $b$ gives weight
$\sum_{u\in U}u-\sum_{c\in B\setminus\{b\}}c=D_I+b$;
the shift difference is $u-b>0$.  This proves the homogeneous resolution,
its fiberwise minimality, and its alternating Hilbert series.
\end{proof}

For $r\ge3$, a basic hook block is particularly transparent.  Take
\begin{equation}\label{eq:simple-hook-support}
 I=\{0,1,\ldots,r-3,r\},
\end{equation}
Then $U=\{r\}$ and $B=\{r-2,r-1\}$.  Hence
\begin{equation}\label{eq:simple-hook-ideal}
 J_I^{(r-1)}=(S_{(1,1)},S_{(1)})=(A_{r-2},-A_{r-1}).
\end{equation}
This is a basic linear codimension-two component of the $(r-1)$-sparse threshold locus.

\subsubsection{General Betti numbers and complete intersections}

Return to arbitrary $s$.  Put $h=r-s+1$, $q_I=|B|$, and define anew
\[
 D_I=\sum_{u\in U}u-\sum_{b\in B}b,\qquad
 W_r=\sum_{j=1}^rj=\frac{r(r+1)}2.
\]
For a positively graded Cohen--Macaulay algebra $C$, we use
$a_{\rm rwt}(C)=-\min\{j:(\omega_C)_j\ne0\}$.
We call $C$ \emph{root-weight level} when its canonical module is generated
in one root weight.  Since $a_0+u_I=s$,
\begin{equation}\label{eq:q-ell-h}
 q_I=r-a_0=u_I+h-1.
\end{equation}

\begin{theorem}[Root-weighted Eagon--Northcott resolution]\label{thm:betti}
Assume $N=\max I\ge r$.  Write $u_I=|U|\ge1$, $q_I=|B|=u_I+h-1$,
and put
\[
 E_U=\bigoplus_{u\in U}R_r(-u),\qquad
 F_B=\bigoplus_{b\in B}R_r(-b).
\]
The hook matrix is a homogeneous degree-zero map $E_U\to F_B$.
The maximal-minor quotient has an integral, ground-ring-base-change-compatible
Eagon--Northcott resolution whose term at homological degree $i$ is
\begin{equation}\label{eq:weighted-EN-terms}
 \mathsf{EN}_i(M_I)=
 \bigoplus_{\substack{C\subseteq B,\ |C|=u_I+i-1\\
                    \alpha\in\mathbb N^U,\ |\alpha|=i-1}}
 R_r\bigl(-d_i(C,\alpha)\bigr),\qquad 1\le i\le h,
\end{equation}
where
\begin{equation}\label{eq:weighted-EN-shifts}
 d_i(C,\alpha)=\sum_{u\in U}u+\sum_{u\in U}\alpha_u u-\sum_{b\in C}b.
\end{equation}
Over each field fiber this resolution is minimal in the positive root-weight grading.
Its ungraded Betti numbers are $\beta_0=1$ and
\begin{equation}\label{eq:betti}
 \beta_i=\binom{q_I}{u_I+i-1}\binom{u_I+i-2}{i-1},\qquad1\le i\le h.
\end{equation}
Its full root-weighted Hilbert series is
\begin{equation}\label{eq:weighted-EN-Hilbert}
 \Hilb_{R_{r,K}/J_{I,K}^{(s)}}(t)=
 \frac{1+\displaystyle\sum_{i=1}^h(-1)^i
       \displaystyle\sum_{\substack{|C|=u_I+i-1\\|\alpha|=i-1}}
       t^{d_i(C,\alpha)}}{\displaystyle\prod_{j=1}^r(1-t^j)}.
\end{equation}
The last free term is explicitly
\begin{equation}\label{eq:weighted-EN-top}
 \mathsf{EN}_h(M_I)=
 \bigoplus_{\substack{\alpha\in\mathbb N^U\\|\alpha|=h-1}}
 R_r\!\left(-D_I-\sum_{u\in U}\alpha_u u\right).
\end{equation}
For each field $K$, the canonical module of $C_I=R_{r,K}/J_{I,K}^{(s)}$
has minimal generator root weights
\begin{equation}\label{eq:weighted-canonical-generators}
 W_r-D_I-\sum_{u\in U}\alpha_u u,
 \qquad \alpha\in\mathbb N^U,\quad |\alpha|=h-1.
\end{equation}
Consequently,
\begin{equation}\label{eq:weighted-canonical-invariants}
 \operatorname{type}(C_I)=\binom{u_I+h-2}{h-1},\qquad
 a_{\rm rwt}(C_I)=D_I+(h-1)N-W_r.
\end{equation}
For these field fibers the following conditions are equivalent:
\begin{equation}\label{eq:CI-classification}
 \begin{gathered}
 C_I\text{ is a complete intersection}
 \ \Longleftrightarrow\ C_I\text{ is Gorenstein}\\
 \Longleftrightarrow\ C_I\text{ is root-weight level}
 \ \Longleftrightarrow\ (h=1\text{ or }u_I=1).
 \end{gathered}
\end{equation}
The same numerical condition characterizes complete intersections over
$\mathbb Z$; equivalently, $s=r$ or $I$ has exactly one exponent at least $r$.
\end{theorem}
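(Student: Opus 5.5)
The plan is to transport the integral, base-change-compatible Eagon--Northcott complex of Theorem~\ref{v6:lin:uniform} through the hook--Schur reduction of Proposition~\ref{prop:hook-reduction}, and then read every numerical statement off the graded free terms.

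\textbf{Step 1: homogeneity and the complex.} By Proposition~\ref{prop:hook-reduction}, $J_I^{(s)}=I_{u_I}(M_I)$. By \eqref{eq:entry-weight}, $\rwt(q_{b,u})=u-b$, so $M_I$ is a homogeneous degree-zero map $E_U\to F_B$. Its maximal minors have expected height $q_I-u_I+1=h$. I would obtain this height from the fixed-support codimension theorem (Theorem~\ref{thm:expected-codimension}, via Theorem~\ref{thm:CM-flat}), which is stated for integral fibers. The Eagon--Northcott complex of a homogeneous map then has terms $\bigwedge^{u_I+i-1}F_B\otimes\Gamma^{i-1}(E_U^*)\otimes\det E_U$, up to the dualization convention. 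The summand indexed by $(C,\alpha)$ carries shift $\sum_{u\in U}u+\sum\alpha_u u-\sum_{b\in C}b$, which is \eqref{eq:weighted-EN-shifts}. Exactness over $\mathbb Z$ and after base change follows as in the proof of Theorem~\ref{v6:lin:uniform}: each field fiber has the expected grade, so the fiber complexes are acyclic, and the local splitting argument applies. The ranks give \eqref{eq:betti}.

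\textbf{Step 2: minimality and Hilbert series.} Minimality on field fibers means no differential entry is a nonzero constant. In Eagon--Northcott the differentials have entries equal to entries of $M_I$, each of positive root weight $u-b>0$. So over a field every entry lies in the maximal homogeneous ideal, and the resolution is minimal. The alternating sum of the shifts over $\prod_j(1-t^j)$ gives \eqref{eq:weighted-EN-Hilbert}. For $i=h$ we have $|C|=q_I$, so $C=B$, and this yields \eqref{eq:weighted-EN-top}.

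\textbf{Step 3: canonical module and classification.} Since $C_I$ is Cohen--Macaulay of codimension $h$, $\omega_C\simeq\operatorname{Ext}^h(C,R)(-W_r)$, where $W_r$ is the weight of the canonical module of $R_r$. Dualizing the last term shows that the minimal generators have weights $W_r-d_h(B,\alpha)$, which is \eqref{eq:weighted-canonical-generators}. The type is the rank $\binom{u_I+h-2}{h-1}$. The $a$-invariant uses the maximal generator shift: $d_h$ is maximized by putting $\alpha$ entirely on $\max U=N$, giving $D_I+(h-1)N$. Hence $a_{\rm rwt}=D_I+(h-1)N-W_r$.

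For the equivalences, the type equals $1$ iff $h=1$ or $u_I=1$. The implications CI $\Rightarrow$ Gorenstein $\Leftrightarrow$ type one are standard. For CI, when $h=1$ there is a single generator, and when $u_I=1$ the ideal $I_1$ of a column of length $h$ is generated by $h$ elements in height $h$.

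Level $\Rightarrow$ type one needs a check. If $u_I\ge2$ and $h\ge2$, the weights $\sum\alpha_u u$ with $|\alpha|=h-1$ are not all equal, because $U$ has two distinct elements. So the generators occur in distinct weights, and the module is not level.

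Over $\mathbb Z$, a complete intersection requires the minimal number of generators on the fibers to equal $h$. Here $\beta_1=\binom{q_I}{u_I}$, and this equals $h=q_I-u_I+1$ iff $u_I=1$ or $h=1$. The condition $h=1$ is $s=r$.

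The main obstacle is Step 1's exactness over $\mathbb Z$ with weighted grading. This needs the height-$h$ statement on every field fiber, including characteristic $p$, which I would import from Theorem~\ref{thm:CM-flat}. The step that needs the most care is the level argument, where the distinctness of the weights must be checked.
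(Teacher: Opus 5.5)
Your proposal is correct and follows the paper's proof essentially step for step. The shared steps are: hook reduction plus the expected grade from Theorem~\ref{thm:CM-flat}, the Eagon--Northcott complex with shifts $d_i(C,\alpha)$, minimality from $u-b>0$, dualizing the top term (where $C=B$) against $R_{r,K}(-W_r)$, and the checks that $\binom{u_I+h-1}{u_I}=h$ and that $(h-1)\min U\neq(h-1)\max U$ for the classification. The differences are minor. For ground-ring base change you invoke the local-splitting argument of Theorem~\ref{v6:lin:uniform}, whereas the paper uses the flat-syzygy argument of Theorem~\ref{thm:CM-flat}; both work. Your displayed term should be written as $\bigwedge^{u_I+i-1}F_B^*\otimes\Gamma^{i-1}(E_U)\otimes\bigwedge^{u_I}E_U$, i.e.\ the complex of $F_B^*\to E_U^*$, since that is the form that actually produces the shift $d_i(C,\alpha)$ you state.
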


\begin{proof}
Expected grade follows from Theorem~\ref{thm:CM-flat} and
Proposition~\ref{prop:hook-reduction}.  Eagon--Northcott for
$F_B^*\to E_U^*$ has integral terms
\[
 \bigwedge^{u_I+i-1}F_B^*\otimes\Gamma^{i-1}(E_U)
       \otimes\bigwedge^{u_I}E_U.
\]
The wedge and divided-power bases have degrees
\eqref{eq:weighted-EN-shifts}.  The flat-syzygy argument of
Theorem~\ref{thm:CM-flat} gives ground-ring-base-change exactness;
coefficient specialization still requires expected grade.
Since $u-b>0$, the field differentials have positive weight and are
minimal.  Counting basis indices gives \eqref{eq:betti} and
\eqref{eq:weighted-EN-Hilbert}.

The field-fiber generator number $\binom{u_I+h-1}{u_I}$ equals height
$h$ exactly for $h=1$ or $u_I=1$.  The integral ideal then has one
determinant or $h$ single-column entries, respectively, so expected
grade makes it a complete intersection.  Otherwise it fails this
property at every field fiber's homogeneous vertex.

At $i=h$, necessarily $C=B$, giving \eqref{eq:weighted-EN-top}.
Dualizing against $R_{r,K}(-W_r)$ gives the canonical generators in
\eqref{eq:weighted-canonical-generators}; positive-weight entries
ensure minimality.  Their number gives the type, and their least degree
$W_r-D_I-(h-1)\max U$ gives the $a$-invariant.

The last term has one multiindex exactly when $h=1$ or $u_I=1$.
Otherwise, concentrating all $h-1$ units at the smallest and largest
members of $U$ gives distinct generator degrees and type greater than one.
This proves \eqref{eq:CI-classification}.  Thus root-weight levelness
differs from that of the standard projective compactification.
For $h=2$, the syzygy shifts $D_I+u$ and canonical weights $W_r-D_I-u$
agree with Theorem~\ref{cor:weighted-HB}.
\end{proof}

\section{Relation incidence, contractions, and postulation}\label{v6:app:postulation}

The incidence and contraction interpretations are followed by
standard-graded consequences of Corollary~\ref{cor:projective-resolution}.
These supplements are independent of the reconstruction proof.

\Needspace{9\baselineskip}
\subsection{The Krylov--Toeplitz exact sequence}

Fix $N\ge r$, put $L=N-r+1$, and let
\begin{equation}\label{eq:Toeplitz-matrix}
 \Gamma_{N,r}(G)=\bigl(A_{j-p}\bigr)_
 {0\le p<L,\ 0\le j\le N},
 \qquad A_r=1,\quad A_\nu=0\quad(\nu<0\text{ or }\nu>r).
\end{equation}
Thus the $p$th row is the coefficient vector of $x^pG$.  Write
$H^{\rm rem}_{[0,N]}=(Q_0\ \cdots\ Q_N)$.

\begin{theorem}[Krylov--Toeplitz Fitting duality]\label{thm:Toeplitz-duality}
Over $R_r=\mathbb Z[A_0,\ldots,A_{r-1}]$ there is an exact sequence
\begin{equation}\label{eq:Toeplitz-exact}
 0\longrightarrow R_r^L
 \xrightarrow{\ \Gamma_{N,r}(G)^{\mathsf T}\ }
 R_r^{N+1}
 \xrightarrow{\ H^{\rm rem}_{[0,N]}\ }
 R_r^r\longrightarrow0.
\end{equation}
For $I\subseteq\{0,\ldots,N\}$ of cardinality $1\le s\le r$,
put $J=I^c$. Projection to the $J$-coordinates induces a
canonical isomorphism
\begin{equation}\label{eq:Toeplitz-cokernel}
 \operatorname{coker}H_I\simeq
 \operatorname{coker}\Gamma_{N,r}(G)_J^{\mathsf T}.
\end{equation}
Consequently, scheme-theoretically over $\mathbb Z$,
\begin{equation}\label{eq:Toeplitz-Fitting}
 J_I^{(s)}=I_s(H_I)=I_L\bigl(\Gamma_{N,r}(G)_J\bigr).
\end{equation}
Moreover, after compatible choices of orientations, complementary maximal minors satisfy
\begin{equation}\label{eq:complementary-minors}
 \det\Gamma_{N,r}(G)_D=\pm\det H^{\rm rem}_{D^c},
 \qquad |D|=L.
\end{equation}
In particular,
\begin{equation}\label{eq:global-Plucker-augmentation}
 J_I^{(s)}=
 \bigl(\det H_S^{\rm rem}: I\subseteq S,\ |S|=r\bigr).
\end{equation}
\end{theorem}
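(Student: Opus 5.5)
The exact sequence \eqref{eq:Toeplitz-exact} comes from monic division over $R_r$, and everything else follows by linear algebra on split exact sequences of free modules.

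\emph{The sequence.} Identify $R_r^{N+1}$ with polynomials of degree at most $N$, so that $H^{\rm rem}_{[0,N]}$ sends $f$ to its remainder modulo the monic $G$. This map is surjective, since $Q_i=e_i$ for $i<r$. Its kernel consists of the multiples $GH$ with $\deg H\le N-r$. Because $G$ is monic, division gives a unique such $H$, so the kernel is free with basis $x^pG$, $0\le p<L$; this is exactly the image of $\Gamma_{N,r}(G)^{\mathsf T}$. Injectivity of $\Gamma^{\mathsf T}$ follows from the leading-coefficient unitriangularity of the rows $x^pG$. Since the sequence consists of free modules, it is split. It therefore stays exact after every base change, which is needed for the scheme-theoretic statement over $\mathbb Z$.

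\emph{The cokernel isomorphism.} Write $R_r^{N+1}=R^I\oplus R^J$. By exactness, $R^r\simeq R^{N+1}/\operatorname{im}\Gamma^{\mathsf T}$. Hence
\[
\operatorname{coker}H_I\simeq R^{N+1}/(\operatorname{im}\Gamma^{\mathsf T}+R^I)\simeq R^J/\pi_J(\operatorname{im}\Gamma^{\mathsf T})=\operatorname{coker}\Gamma_J^{\mathsf T},
\]
where $\pi_J$ is the coordinate projection. This isomorphism is canonical and is induced by $\pi_J$. Fitting ideals are invariants of the module. The module $\operatorname{coker}H_I$ has presentation $R^s\to R^r$, and its $(r-s)$th Fitting ideal is $I_s(H_I)$. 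The same module has presentation $R^L\to R^{N+1-s}$, where $|J|=N+1-s$, and since $(N+1-s)-L=r-s$ the same Fitting ideal is $I_L(\Gamma_J)$. This gives \eqref{eq:Toeplitz-Fitting}.

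\emph{Complementary minors.} Apply the standard fact about a split short exact sequence of free modules $0\to R^L\xrightarrow{\alpha} R^{N+1}\xrightarrow{\beta} R^r\to0$: there is a unit $c$, fixed by orientations of the three terms, such that $\det\alpha_D=\pm c\det\beta_{D^c}$ for every $L$-subset $D$. One way to see this is to observe that $\bigwedge^L\alpha$ and the dual of $\bigwedge^r\beta$ both represent the same point of the Grassmannian, namely the kernel. More concretely, take $D$ to be the block of the top $L$ coordinates. There $\Gamma_D$ is unitriangular with determinant $1$, while $H_{D^c}=(Q_0,\dots,Q_{r-1})$ is the identity. So both sides equal $1$ at this index, and the unit $c$ is $\pm1$. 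Finally, \eqref{eq:global-Plucker-augmentation} follows from \eqref{eq:Toeplitz-Fitting} and \eqref{eq:complementary-minors}: the $L$-minors of $\Gamma_J$ are indexed by $L$-subsets $D\subseteq J$, and these correspond exactly to the complements $S=D^c$ with $I\subseteq S$ and $|S|=r$.

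\emph{Main obstacle.} The delicate part is the sign and unit bookkeeping in the complementary-minor identity over $\mathbb Z$. The relation must hold with a single global unit and not merely up to units that depend on $D$. To control this, I would first check the identity through the Plücker embedding, and then fix the constant using the single normalizing minor described above.
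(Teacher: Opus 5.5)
Your proposal is correct and follows essentially the same route as the paper: monic division gives the split exact sequence, the cokernel isomorphism comes from quotienting by $\operatorname{im}\Gamma^{\mathsf T}+R^I$ and comparing the $(r-s)$th Fitting ideals of the two presentations, and the complementary-minor identity comes from top exterior powers of the split sequence, after which ranging over $L$-subsets $D\subseteq J$ gives the Pl\"ucker augmentation. Your explicit normalization at $D=\{r,\ldots,N\}$, where $\Gamma_D$ is unitriangular and $H_{D^c}=I_r$, is a harmless extra check: the paper does not need it, since the only units of $R_r$ are $\pm1$ anyway.
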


\begin{proof}
Monic division identifies the kernel of the remainder map with the
multiples $UG$, $\deg U<L$, proving \eqref{eq:Toeplitz-exact}.
The first $r$ columns of the remainder matrix form the identity,
so this exact sequence is split and remains exact after arbitrary
base change.
Projecting this kernel to the coordinates $J$ has cokernel
$R_r^r/\operatorname{im}H_I$, which proves
\eqref{eq:Toeplitz-cokernel}.  Its $(r-s)$th Fitting ideal is
$I_s(H_I)$ in the remainder presentation and $I_L(\Gamma_J)$ in the
multiplication presentation, since $|J|=L+r-s$.
Taking top exterior powers of the split exact sequence gives
\eqref{eq:complementary-minors}; ranging over the $L$-subsets of $J$
then gives \eqref{eq:global-Plucker-augmentation}.
\end{proof}

The Schur--Pl\"ucker formula of \cite{LiYuanSchur} turns the last presentation into
\begin{equation}\label{eq:global-Schur-augmentation}
 J_I^{(s)}=
 \left(
 A_0^{\min S}S_{\kappa(U_S)}:
 I\subseteq S,\ |S|=r
 \right),
\end{equation}
where $U_S=(S-\min S)\setminus\{0\}$, and harmless signs and powers of $(-1)^r$
have been suppressed.  Thus the localized
augmentation in Theorem~\ref{thm:schur-augmentation} is the minimal form of a global
complementary-minor identity.

\subsection{Monomial compactifications}

The following specializations and numerical consequences use the
uniform family of Theorem~\ref{v6:lin:uniform}.  Only the density of a
chosen affine chart requires an additional monomial-support argument.

Let $1\le s\le r\le N$, let $I=\{i_0<\cdots<i_{s-1}=N\}$, and set
\begin{equation}\label{eq:monomial-linear-series}
 V_{I,\mathbb Z}=\bigoplus_{i\in I}\mathbb Z X^iZ^{N-i}
 \subseteq H^0(\PP^1_{\mathbb Z},\mathcal O(N)).
\end{equation}
We write $V_I$ after the ground ring has been specified.  \begin{theorem}[Integral uniform secant compactification]\label{thm:secant-compactification}
Over $\mathbb Z$, define
\begin{equation}\label{eq:projective-secant-locus}
 \overline X_I^{(s)}=
 D_{s-1}\left(V_{I,\mathbb Z}\otimes\mathcal O_{\PP^r}
 \longrightarrow\cE_{N,r}\right).
\end{equation}
Its monic chart $A_r\ne0$ is scheme-theoretically $X_I^{(s)}$.  If $T_I(G)$
denotes multiplication by the universal binary form followed by the quotient,
\[
 H^0(\mathcal O(N-r))\xrightarrow{\,\cdot G\,}
 H^0(\mathcal O(N))\longrightarrow H^0(\mathcal O(N))/V_{I,\mathbb Z},
\]
then $T_I(G)$ is an $(N-s+1)\times(N-r+1)$ homogeneous linear Toeplitz matrix and
\begin{equation}\label{eq:projective-Toeplitz}
 \overline X_I^{(s)}=\operatorname{Proj}R_{I,\mathbb Z},\qquad
 R_{I,\mathbb Z}=
 \mathbb Z[A_0,\ldots,A_r]/I_{N-r+1}(T_I(G)).
\end{equation}
The ring $R_{I,\mathbb Z}$ is Cohen--Macaulay of dimension $s+1$ and flat over
$\mathbb Z$.  Its integral Eagon--Northcott complex is a free resolution and
remains exact under every ground-ring change $\mathbb Z\to B_0$.
The scheme $\overline X_I^{(s)}$ is flat over $\operatorname{Spec}\mathbb Z$
with pure relative dimension $s-1$.  Each field fiber is arithmetically
Cohen--Macaulay and is the scheme-theoretic closure of its monic chart.
In $A^{r-s+1}(\PP^r_K)$ its fundamental cycle and degree are
\begin{equation}\label{eq:universal-projective-degree}
 [\overline X_{I,K}^{(s)}]
 =\binom{N-s+1}{r-s+1}H^{r-s+1},\qquad
 \deg\overline X_{I,K}^{(s)}=\binom{N-s+1}{r-s+1}.
\end{equation}
\end{theorem}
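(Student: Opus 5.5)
The plan is to specialize Theorem~\ref{v6:lin:uniform} to the monomial system $V=V_{I,\mathbb Z}$, which is a rank-$s$ subbundle of $S_N$ with free quotient (its complement is spanned by the monomials outside $I$). The presentation \eqref{v6:lin:projected-multiplication} then becomes multiplication by $G$ followed by deletion of the rows indexed by $J=\{0,\ldots,N\}\setminus I$. This is an $(N-s+1)\times(N-r+1)$ matrix $T_I(G)$ whose entries are the homogeneous linear forms $A_{i-j}$, so it is Toeplitz. Theorem~\ref{v6:lin:uniform} immediately gives four things: the maximal minors $I_{N-r+1}(T_I(G))$ define $\overline X_I^{(s)}$; the Eagon--Northcott complex is an integral resolution exact after every base change; $R_{I,\mathbb Z}$ is flat; and each field fiber is Cohen--Macaulay of dimension $s$ as a projective quotient. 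Consequently $R_{I,\mathbb Z}$ itself has dimension $s+1$: it has Krull dimension one more than its fibers, and it is Cohen--Macaulay because it is flat over $\mathbb Z$ with Cohen--Macaulay fibers. Flatness and pure relative dimension $s-1$ of $\overline X_I^{(s)}$ over $\operatorname{Spec}\mathbb Z$ follow from the fibers, and each fiber is arithmetically Cohen--Macaulay because its homogeneous ideal is saturated.

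For the monic chart, I will dehomogenize at $A_r=1$. The matrix $T_I(G)$ becomes the submatrix $\Gamma_{N,r}(G)^{\mathsf T}_J$ of the Toeplitz matrix \eqref{eq:Toeplitz-matrix}. Theorem~\ref{thm:Toeplitz-duality}, specifically \eqref{eq:Toeplitz-Fitting}, identifies $I_L(\Gamma_{N,r}(G)_J)$ with $I_s(H_I)=J_I^{(s)}$ over $\mathbb Z$. Hence the chart is scheme-theoretically $X_I^{(s)}$.

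The degree formula comes from the Eagon--Northcott Hilbert series in Theorem~\ref{v6:lin:uniform}. Alternatively, and more cleanly, it is the Porteous/Thom--Porteous class of the degeneracy locus of a map $\mathcal O(-1)^{L}\to\mathcal O^{L+h-1}$ of expected codimension $h=r-s+1$. That class is $\binom{L+h-1}{h}H^h=\binom{N-s+1}{r-s+1}H^{r-s+1}$, and expected codimension plus Cohen--Macaulayness makes it the fundamental cycle, including multiplicities. As a cross-check, Theorem~\ref{thm:projective-factorization-CI} gives the mixed degree $\int H_G^{s-1}=\binom{N-s+1}{r-s+1}$ when $a=s-1$. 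Since $p$ is an isomorphism near the generic points of the image, this agrees with the degree of $\overline X_I^{(s)}$.

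The main obstacle is density: the claim that each field fiber is the scheme-theoretic closure of its monic chart (compare Remark~\ref{v6:lin:chart-scope}). Because the fiber is Cohen--Macaulay with no embedded points, it suffices to show that no irreducible component lies in the hyperplane $A_r=0$. I would argue this through the incidence $\overline{\mathfrak Z}_I$. Every component of $\overline X_I^{(s)}$ is $p$ of an incidence component, and by finite flatness of $q$ every incidence component dominates $\PP(V_I)$. A general relation $F\in V_I$ has nonzero $X^N$ coefficient, since $N\in I$, so $F$ has no root at infinity. Hence every factor $G$ of a general $F$ also has no root at infinity, which means $A_r\neq0$. Therefore every component meets the monic chart, and the generic points lie in it. This argument works over any field without characteristic hypotheses, because it uses only dominance and the hypothesis $\max I=N$. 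Once density holds for each fiber, scheme-theoretic closure follows from the absence of embedded components.
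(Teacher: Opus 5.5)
Your proposal is correct and follows the paper's proof essentially step for step. You specialize Theorem~\ref{v6:lin:uniform} to $V_{I,\mathbb Z}$, identify the monic chart through the Fitting identity \eqref{eq:Toeplitz-Fitting}, and obtain the cycle from Thom--Porteous; you then prove monic density by pushing the dominance of every component of the finite flat incidence over $\PP(V_I)$ through $p$, and use Cohen--Macaulayness to make $A_r$ a nonzerodivisor. One small wording point: flatness of $\overline X_I^{(s)}$ over $\operatorname{Spec}\mathbb Z$ comes from flatness of $R_{I,\mathbb Z}$ via homogeneous localization, not from the fibers, which only give the pure relative dimension.
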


\begin{proof}
Apply Theorem~\ref{v6:lin:uniform} to the integral subbundle
$V_{I,\mathbb Z}$.  The cokernel presentation gives
\eqref{eq:projective-Toeplitz}; on $A_r\ne0$, restriction to the
universal divisor is monic polynomial division, so its Fitting ideal
is $J_I^{(s)}$ by Theorem~\ref{thm:Toeplitz-duality}.
The general theorem supplies the field height $h$, the
Cohen--Macaulay fibers, integral flatness, and the universally exact
graded resolution.  Flatness and Cohen--Macaulay fibers over the
regular base $\mathbb Z$ make $R_{I,\mathbb Z}$ Cohen--Macaulay
of dimension $s+1$ \cite{EisenbudCA}.  The assertions about Proj follow
by homogeneous localization and passage to degree zero.

It remains to prove monic density, which is specific to $V_I$.
Over an algebraic closure, the incidence
$\{([G],[H]):[GH]\in\PP(V_I)\}$ is finite flat over the integral
space $\PP(V_I)$ by Lemma~\ref{lem:binary-multiplication}.
Every incidence component dominates this base: flatness excludes
minimal primes lying over nonzero base primes.  Since $N\in I$, the
relations with nonzero leading coefficient form a nonempty open set;
neither factor over this set has a root at infinity.  Every image
component therefore meets $A_r\ne0$.  The homogeneous field quotient
is Cohen--Macaulay, so $A_r$ avoids every associated prime and is a
nonzerodivisor.  Its ideal is consequently $A_r$-saturated, proving
the scheme-theoretic closure assertion.  This argument descends to
every field and does not assume affine expected codimension.

Finally, \eqref{eq:secant-bundle-resolution} gives
$c(\cE_{N,r})=(1-H)^{-(N-r+1)}$.
Thom--Porteous in the established expected codimension yields
$[\overline X_{I,K}^{(s)}]=c_h(\cE_{N,r})$
and \eqref{eq:universal-projective-degree}.
\end{proof}

\subsection{Affine fixed-support geometry}\label{sec:fixed}

Retain \eqref{eq:intro-universal}--\eqref{eq:intro-block}; the
superscript in $J_I^{(s)}$ denotes support size.  The relation incidence
gives the affine interpretation and an elementary height check.

\paragraph{Relation incidence.}

Let $K$ be a field and write $R_{r,K}=K[A_0,\ldots,A_{r-1}]$.  Introduce
\begin{equation}\label{eq:incidence}
 Z_{I,K}=
 \left\{(g,[c_i]_{i\in I})\in\A_K^r\times\PP_K^{s-1}:
 g\mid\sum_{i\in I}c_ix^i\right\}.
\end{equation}
Its equations are the $r$ entries of $H_Ic=0$, so $Z_{I,K}$ is closed.

\begin{remark}[Support interpretation]\label{prop:support-interpretation}
For every extension field $L/K$, the following conditions on $g\in\A_K^r(L)$ are equivalent:
\begin{enumerate}[label=\textup{(\roman*)}]
\item $g\in X_{I,K}^{(s)}(L)$;
\item the columns $(Q_i(g))_{i\in I}$ are linearly dependent over $L$;
\item there is a nonzero polynomial $f=\sum_{i\in I}c_ix^i$ divisible by $g$.
\end{enumerate}

The vanishing of all $s\times s$ minors of an $r\times s$ matrix is equivalent to rank less
than $s$, and hence to a nonzero kernel vector $c$.  Since $H_Ic$ is the coefficient vector of
$f\bmod g$, the last two conditions are equivalent.
\end{remark}

The projection $Z_{I,K}\to\A_K^r$ is proper because the second factor is projective.  Its
set-theoretic image is therefore closed and equals $|X_{I,K}^{(s)}|$.  We use the determinantal
scheme structure on this image.

\begin{theorem}[Expected codimension]\label{thm:expected-codimension}
Let $1\le s\le r$ and $I\subseteq\{0,\ldots,n-1\}$ have cardinality $s$.
\begin{enumerate}[label=\textup{(\roman*)}]
\item If $I\subseteq\{0,\ldots,r-1\}$, then $J_I^{(s)}=R_r$ and $X_I^{(s)}=\varnothing$.
\item If $\max I\ge r$, then for every field $K$,
\begin{equation}\label{eq:expected-codimension}
 \operatorname{ht}J_{I,K}^{(s)}=r-s+1,
 \qquad \dim X_{I,K}^{(s)}=s-1.
\end{equation}
\end{enumerate}
\end{theorem}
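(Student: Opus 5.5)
Part (i) is a direct computation.  If $I\subseteq\{0,\ldots,r-1\}$, then by \eqref{eq:remainder-recurrence} the columns $Q_i=e_i$ for $i\in I$ are distinct standard basis vectors.  Hence $H_I$ contains an $s\times s$ identity submatrix, its corresponding minor is $1$, and $J_I^{(s)}=R_r$.

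For (ii) I will first reduce to Proposition~\ref{prop:hook-reduction}.  There $J_I^{(s)}=I_{u_I}(M_I)$, where $M_I$ is a $(u_I+h-1)\times u_I$ matrix, with $u_I\ge1$ because $\max I\ge r$.  The ideal of maximal minors of such a matrix has height at most $(u_I+h-1)-u_I+1=h=r-s+1$ by the Eagon--Northcott bound.  This gives $\operatorname{ht}\le r-s+1$ over every field, provided the ideal is proper; properness follows from the lower bound, or directly from $g=x^r$, where $Q_i=0$ for $i\ge r$ and so $M_I$ vanishes.

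For the reverse inequality I will bound the dimension of $X_{I,K}^{(s)}$ from above, working over $\overline K$.  The plan is to use the incidence $Z_{I,K}$ of \eqref{eq:incidence}, whose projection to $\A^r$ has set-theoretic image $|X_{I,K}^{(s)}|$.  I will project $Z_{I,K}$ instead to $\PP^{s-1}$, the space of relations $[c]$.  The fiber over $[c]$ is the set of monic $g$ of degree $r$ dividing $f=\sum c_ix^i$.
\begin{itemize}
\item If $f$ is a nonzero polynomial of degree $\ge r$, this fiber is finite, since there are finitely many ways to distribute root multiplicities.
\item If $\deg f<r$, the fiber is empty.
\end{itemize}
So every fiber is finite, $\dim Z_{I,K}\le s-1$, and therefore $\dim X_{I,K}^{(s)}\le s-1$, i.e.\ codimension $\ge r-s+1$.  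Since $R_{r,K}$ is a polynomial ring, height equals codimension, so $\operatorname{ht}\ge r-s+1$.  Combined with the first step, this gives equality and $\dim X_{I,K}^{(s)}=s-1$.

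It remains to justify nonemptiness, which the equality $\dim=s-1$ requires.  I expect to show every component has dimension exactly $s-1$: the Eagon--Northcott bound gives codimension $\le h$ for each minimal prime, hence dimension $\ge s-1$.  The point $g=x^r$ lies in $X$, so the variety is nonempty.

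The main obstacle is making sure the finite-fiber argument is honest.  The relation $[c]$ ranges over a projective space, and the set of $[c]$ with $\deg f\ge r$ is the complement of a linear subspace, which is fine since all fibers are finite or empty.  The fiber-dimension inequality also needs to be applied over $\overline K$ to a morphism of finite type.  As an alternative for the upper bound on $\dim Z_{I,K}$, I can restrict the finite flat multiplication map of Lemma~\ref{lem:binary-multiplication} to $\PP(V_I)$ and take the monic chart.
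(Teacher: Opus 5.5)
Your proof is correct and matches the paper's argument: the standard columns give (i); in (ii), finiteness of the relation projection of $Z_{I,K}$ gives $\dim X_{I,K}^{(s)}\le s-1$, the point $g=x^r$ shows the ideal is proper, and the maximal-minor height bound gives the reverse inequality. Two small points. The detour through Proposition~\ref{prop:hook-reduction} is harmless but unnecessary, since the bound $\operatorname{ht}\le r-s+1$ applies directly to the $r\times s$ matrix $H_I$. Also, properness does not follow from the lower height bound, so only your direct $g=x^r$ argument justifies it.
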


\begin{proof}
If all indices are below $r$, a maximal minor of the distinct standard
basis columns is $1$.  Otherwise work over an algebraic closure.
Every nonzero $f_c=\sum_{i\in I}c_ix^i$ has finitely many monic
degree-$r$ divisors, so the relation projection has finite fibers and
\begin{equation}\label{eq:incidence-upper}
 \dim X_{I,K}^{(s)}\le\dim Z_{I,K}\le s-1.
\end{equation}
At $g=x^r$ every column of index at least $r$ vanishes, so the ideal
is proper.  The maximal-minor height bound gives
\begin{equation}\label{eq:det-height-bound}
 \operatorname{ht}I_s(H_I)\le r-s+1,
\end{equation}
which proves the opposite dimension inequality
\cite{EagonNorthcott,BrunsVetter}.
\end{proof}

Thus the relation-space dimension $s-1$, not the ambient exponent bound
$n$, controls the dimension of each nonempty block.

\paragraph{Integral determinantal structure.}

Let $\mathsf F=R_r^r$, $\mathsf E=R_r^s$, and regard the transpose of $H_I$ as a map $\mathsf F\to \mathsf E$.  Put
$h=r-s+1$.  The Eagon--Northcott terms resolving the maximal-minor quotient have the form
\begin{equation}\label{eq:EN-terms}
 \mathsf{EN}_i(H_I)=
 \bigwedge^{s+i-1}\mathsf F\otimes\Gamma^{i-1}(\mathsf E^*)\otimes\bigwedge^s(\mathsf E^*),
 \qquad 1\le i\le h.
\end{equation}

Here $\Gamma^j(\mathsf E^*)=(\sym^j \mathsf E)^*$ is the divided-power module.  This dual-symmetric,
rather than ordinary symmetric, convention gives the Eagon--Northcott complex over
$\mathbb Z$ and in arbitrary characteristic.

\begin{theorem}[Universal Cohen--Macaulay and flat structure]\label{thm:CM-flat}
If $\max I\ge r$, the Eagon--Northcott complex
\begin{equation}\label{eq:EN-resolution}
 0\longrightarrow\mathsf{EN}_h(H_I)\longrightarrow\cdots
 \longrightarrow\mathsf{EN}_1(H_I)\longrightarrow R_r
 \longrightarrow R_r/J_I^{(s)}\longrightarrow0
\end{equation}
is exact.  The ring $R_r/J_I^{(s)}$ is Cohen--Macaulay of dimension $s$, is torsion-free and
flat over $\mathbb Z$, and all of its field fibers are Cohen--Macaulay and pure of dimension
$s-1$.
\end{theorem}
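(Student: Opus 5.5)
The plan combines the exact height computed in Theorem~\ref{thm:expected-codimension} with the standard grade-sensitivity of the Eagon--Northcott complex, and then passes from field fibers to $\mathbb Z$ by a flatness argument. The first step is to record that $J_I^{(s)}=I_s(H_I)$ is the ideal of maximal minors of an $r\times s$ matrix over $R_r$. The generic maximal height of such an ideal is $r-s+1=h$. By Theorem~\ref{thm:expected-codimension}(ii), after base change to every field $K$ the ideal $J_{I,K}^{(s)}$ has height exactly $h$. Since $R_{r,K}$ is a polynomial ring, and hence Cohen--Macaulay, height equals grade. The Eagon--Northcott acyclicity criterion \cite{EagonNorthcott,BrunsVetter} then gives exactness of $\mathsf{EN}_\bullet(H_I)\otimes K$ as a resolution of $R_{r,K}/J_{I,K}^{(s)}$. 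Its length is $h$, so this quotient is perfect of grade $h$, hence Cohen--Macaulay of dimension $r-h=s-1$ and unmixed. This gives the assertions about the field fibers.

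The second step is to pass to $\mathbb Z$. I would first show that $\operatorname{grade}J_I^{(s)}=h$ in $R_r$ itself. One route is a dimension count: $R_r/J_I^{(s)}$ has fibers of dimension $s-1$ over every point of $\operatorname{Spec}\mathbb Z$. Its height in the catenary domain $R_r$ of dimension $r+1$ is therefore at least $h$, and it is at most $h$ by the determinantal bound. The Eagon--Northcott complex over $R_r$ is then exact, giving a perfect quotient of grade $h$ and dimension $r+1-h=s$; this quotient is Cohen--Macaulay since $R_r$ is regular.

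An alternative for the same step is to reuse the splitting argument of Theorem~\ref{v6:lin:uniform}. Each fixed root-weight piece is a finite free complex over $\mathbb Z$ whose reductions modulo every prime $p$, and over $\mathbb Q$, are exact in positive degrees. Cancelling contractible summands locally over $\mathbb Z_{(p)}$ then shows that the complex stays exact after every base change and that $H_0=R_r/J_I^{(s)}$ is $\mathbb Z$-flat.

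The main obstacle is torsion-freeness over $\mathbb Z$, equivalently that $p$ is a nonzerodivisor on $R_r/J_I^{(s)}$. I would argue this directly. The quotient is Cohen--Macaulay, so its associated primes are its minimal primes, all of height $h$. A minimal prime containing $p$ would be a component of dimension $s$ inside the fiber over $p$, whose dimension is only $s-1$, which is a contradiction. Hence $p$ lies in no associated prime, and the quotient is torsion-free, hence flat over the Dedekind ring $\mathbb Z$. Finally, flatness gives $\operatorname{Tor}_i^{\mathbb Z}(R_r/J,\mathbb F_p)=0$ for $i\ge1$, which confirms that the reduction of the resolution modulo $p$ is the fiber resolution; this is consistent with the first step.
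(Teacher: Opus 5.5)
Your proof is correct, but it runs in the opposite direction from the paper's.

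\textbf{The paper's route.} The paper does not derive flatness or Cohen--Macaulayness intrinsically. It identifies $R_r/J_I^{(s)}$ with the monic chart $A_r\ne0$ of the integral compactification in Theorem~\ref{thm:secant-compactification}. Flatness, Cohen--Macaulayness and dimension $s$ are then inherited from the universal Eagon--Northcott family over the Grassmannian, with its degree-by-degree cancellation of split summands (Theorem~\ref{v6:lin:uniform}). Only afterwards does it use the fiber heights of Theorem~\ref{thm:expected-codimension}, including the generic fiber, to get grade $h$ in $R_r$ and hence exactness of \eqref{eq:EN-resolution}. Base-change exactness then comes from $\mathbb Z$-flatness of the successive syzygies.

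\textbf{Your route.} You stay inside the affine ring $R_r$:
\begin{enumerate}[label=\textup{(\arabic*)},leftmargin=*]
\item Eagon--Northcott exactness on each field fiber.
\item A lift of the height bound to $R_r$.
\item Perfection over the regular ring $R_r$.
\item Torsion-freeness, because the unmixed quotient has no associated prime lying over a closed point of $\operatorname{Spec}\mathbb Z$.
\end{enumerate}
In step (2), the operative fact is not catenarity but this: a minimal prime $P\ni p$ satisfies $\operatorname{ht}P=\operatorname{ht}(P/pR_r)+1\ge h+1$, since $p$ is a nonzero element of the local domain $(R_r)_P$. The same identity drives your torsion-freeness contradiction. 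An associated prime of height $h$ containing $p$ would have height $h-1$ modulo $p$, below the fiber height $h$. This is worth stating in that form.

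\textbf{Comparison.} Your argument is self-contained and independent of the projective compactification. It also explains flatness directly rather than importing it. The paper's proof is shorter given its earlier machinery, and it ties the affine block to the global family used elsewhere. Your alternative via cancellation in fixed root-weight pieces is essentially the paper's mechanism, transplanted from the standard grading of the compactification to the positive root-weight grading of $R_r$.
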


\begin{proof}
Theorem~\ref{thm:secant-compactification} identifies this quotient
with the monic chart of the integral compactification, proving
flatness, Cohen--Macaulayness and the stated dimensions; torsion-free
is equivalent to flat over $\mathbb Z$.
The fiber height $h$ in Theorem~\ref{thm:expected-codimension},
including the generic fiber, gives grade $h$ in the regular ring
$R_r$.  Thus Eagon--Northcott gives \eqref{eq:EN-resolution}, and
the same argument over each field gives pure Cohen--Macaulay fibers.
The quotient and free terms are $\mathbb Z$-flat, so the successive
syzygy sequences show that every syzygy is $\mathbb Z$-flat.
The resolution therefore remains exact under every ground-ring
change $\mathbb Z\to B_0$.  Specialization of the coefficient
variables is a different operation and requires the expected grade.
\end{proof}

\subsection{Standard-graded consequences}
\label{r18:sec:standard-postulation}
The standard-graded resolution below starts with equations in degree
$L$ and first syzygies, when present, in degree $L+1$.
These degrees are not the root weights of Appendix~\ref{v6:app:coordinates}.

\begin{corollary}[Uniform pure resolution and level structure]
\label{cor:projective-resolution}
Let $K$ be a field, let $1\le s\le r$, and let $N=\max I\ge r$.
Put $S=K[A_0,\ldots,A_r]$,
$R_I=R_{I,\mathbb Z}\otimes_{\mathbb Z}K=S/I_{\overline X_I}$,
$L=N-r+1$, and $h=r-s+1$.  The ideal $I_{\overline X_I}$ has an
$L$-linear minimal resolution, whereas the quotient $R_I$ has the pure
degree sequence $(0,L,L+1,\ldots,L+h-1)$:
\begin{equation}\label{eq:projective-EN}
 0\longrightarrow S(-(L+h-1))^{\beta_h}\longrightarrow\cdots
 \longrightarrow S(-L)^{\beta_1}\longrightarrow S
 \longrightarrow R_I\longrightarrow0,
\end{equation}
where
\begin{equation}\label{eq:projective-Betti}
 \beta_i=\binom{L+h-1}{L+i-1}\binom{L+i-2}{i-1},
 \qquad 1\le i\le h.
\end{equation}
Its Hilbert series and $h$-vector are
\begin{equation}\label{eq:projective-Hilbert-vector}
 \Hilb_{R_I}(t)=
 \frac{\displaystyle\sum_{j=0}^{L-1}\binom{h+j-1}{h-1}t^j}{(1-t)^s},
 \qquad
 h_j=\begin{cases}
 \binom{h+j-1}{h-1},&0\le j<L,\\
 0,&j\ge L.
 \end{cases}
\end{equation}
The ring is arithmetically level, and
\begin{equation}\label{eq:projective-invariants}
 \operatorname{reg}R_I=L-1=N-r,\qquad
 a(R_I)=L-1-s,\qquad
 \operatorname{type}R_I=\binom{L+h-2}{h-1}=\binom{N-s}{r-s}.
\end{equation}
In particular, it is arithmetically Gorenstein exactly when
$L=1$ or $h=1$, equivalently $N=r$ or $s=r$.
\end{corollary}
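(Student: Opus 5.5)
The plan is to read everything off the Eagon--Northcott resolution of Theorem~\ref{v6:lin:uniform}, specialized to the monomial system $V_I$ as in Theorem~\ref{thm:secant-compactification}. Over $K$, the ideal $I_{\overline X_I}$ is generated by the maximal minors of the $(L+h-1)\times L$ linear matrix $T_I(G)$, and it has the expected height $h$. The generic Eagon--Northcott complex for a $p\times q$ matrix of linear forms ($p=L+h-1$ rows, $q=L$ columns) has $i$th term
\[
\bigwedge^{L+i-1}K^{L+h-1}\otimes\Gamma^{i-1}(K^L)\otimes\det,
\]
placed in internal degree $L+i-1$. This is because the first map has degree $L$ (the minors) and each subsequent map is linear. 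The complex is exact by the expected grade, which gives the pure sequence $(0,L,L+1,\dots,L+h-1)$. Counting ranks, $\binom{L+h-1}{L+i-1}\binom{L+i-2}{i-1}$, gives \eqref{eq:projective-Betti}. Minimality holds because all differentials after the first have linear, hence positive-degree, entries, so none is a unit; the ideal therefore has an $L$-linear resolution.

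For the Hilbert series, rather than summing alternating Betti numbers, I would use that $R_I$ is Cohen--Macaulay of dimension $s$. Modding out by a linear regular sequence of length $s$, available after extending $K$ to an infinite field, leaves an Artinian quotient of $K[y_1,\dots,y_h]$ by the maximal minors of an $(L+h-1)\times L$ linear matrix with the same pure resolution. The Artinian reduction has $h$-vector $h_j=\dim$ of degree-$j$ forms in $h$ variables for $j<L$, and $0$ for $j\ge L$. This follows because the ideal begins in degree $L$ and the Artinian reduction of a pure resolution with these shifts is $\mathfrak m^L$. Indeed, $\mathfrak m^L$ in $h$ variables has the Eagon--Northcott resolution with the same Betti numbers, so the Hilbert series coincide. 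This yields \eqref{eq:projective-Hilbert-vector}. From it, the regularity is the top degree $L-1$ of the $h$-vector, and $a(R_I)=(L-1)-s$.

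For levelness and type, I would dualize the last term. $\mathsf{EN}_h$ is concentrated in the single degree $L+h-1$, so the canonical module is generated in one degree, which is levelness. The type equals $\beta_h=\binom{L+h-1}{L+h-1}\binom{L+h-2}{h-1}=\binom{N-s}{r-s}$, using $L+h-2=N-s$ and $h-1=r-s$. Gorensteinness is equivalent to type one, and $\binom{L+h-2}{h-1}=1$ exactly when $h=1$ or $L=1$.

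The main obstacle is justifying the Hilbert series step cleanly. The paper already notes that the computation is recorded in this appendix. I would first try the direct route: the alternating sum $\sum_i(-1)^i\beta_i t^{L+i-1}$, divided by $(1-t)^{h}$, should telescope to $\sum_{j<L}\binom{h+j-1}{h-1}t^j$. This identity can be verified by comparing with the known resolution of $\mathfrak m^L$, which is the fallback argument above.
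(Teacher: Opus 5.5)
Your proposal is correct. Outside the Hilbert-series step it matches the paper's proof: Eagon--Northcott exactness from the expected grade in Theorem~\ref{v6:lin:uniform}, minimality from positive-degree entries, and levelness, type and the Gorenstein criterion from the single shift of the last free module.

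The difference is how you obtain the Hilbert series. The paper proves the integer polynomial identity
\[
 1+\sum_{i=1}^{h}(-1)^i\beta_i t^{L+i-1}=(1-t)^h\sum_{j=0}^{L-1}\binom{h+j-1}{h-1}t^j
\]
directly. It matches constant terms and derivatives, using $(L+i-1)\beta_i=L\binom{L+h-1}{h-1}\binom{h-1}{i-1}$. This is a purely combinatorial check that needs no regular sequence or passage to an infinite field, and it is visibly independent of the characteristic.

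You instead cut by $s$ general linear forms and identify the Artinian reduction with $K[y_1,\ldots,y_h]/\mathfrak m^L$. As written, you assert that identification before justifying it. Your comparison of Betti numbers with the resolution of $\mathfrak m^L$ does give equality of Hilbert series, but a one-line count settles the identification outright. The reduced ideal has no nonzero forms below degree $L$, and it has $\beta_1=\binom{L+h-1}{L}$ minimal generators in degree $L$. That is exactly $\dim_K K[y_1,\ldots,y_h]_L$, so the ideal contains, and hence equals, $\mathfrak m^L$. If you prefer the Betti-number route, Proposition~\ref{v6:lin:minor-isomorphism} with $r=h-1$ shows that $\mathfrak m^L$ is the maximal-minor ideal of the generic $(L+h-1)\times L$ multiplication matrix, so its Eagon--Northcott resolution has the same shape.

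What your route buys is structure rather than a computation. Besides the $h$-vector, the socle of $K[y_1,\ldots,y_h]/\mathfrak m^L$ is the whole degree-$(L-1)$ component. That gives levelness, regularity $L-1$ and type $\binom{L+h-2}{h-1}$ at once, without dualizing the resolution.
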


\begin{proof}
By Theorem~\ref{v6:lin:uniform}, the linear matrix has size
$(L+h-1)\times L$ and expected grade $h$.
Eagon--Northcott gives \eqref{eq:projective-EN} with ranks
\eqref{eq:projective-Betti}; positive-degree differentials make it minimal.
The resolution numerator satisfies
\begin{equation}\label{eq:projective-EN-numerator}
 1+\sum_{i=1}^{h}(-1)^i\beta_i t^{L+i-1}
 =(1-t)^h\sum_{j=0}^{L-1}\binom{h+j-1}{h-1}t^j.
\end{equation}
Indeed, both sides have constant term $1$ and derivative
$-L\binom{L+h-1}{h-1}t^{L-1}(1-t)^{h-1}$; for the left side use
$(L+i-1)\beta_i=L\binom{L+h-1}{h-1}\binom{h-1}{i-1}$.
This verifies an integer polynomial identity, independently of the
ground field.  Division by $(1-t)^{r+1}=(1-t)^{s+h}$ gives the Hilbert series.

The last free module has one shift.  Dualizing against $S(-r-1)$
gives $\beta_h$ minimal canonical generators in degree $s-L+1$;
minimality is preserved because the dual differential has positive-degree
entries.  This proves levelness, the $a$-invariant and type; the shifts
give regularity $L-1$.  The type is one exactly when $L=1$ or $h=1$.
For $L=1$ the ideal consists of $h$ independent linear forms and the
resolution is Koszul, so this endpoint is included.
\end{proof}

\subsection{Relation-incidence and root-weight geometry}

\begin{proposition}[Gorenstein relation incidence]
\label{prop:relation-incidence-Gorenstein}
Assume $N=\max I\ge r$, and write
\begin{equation*}
 \varpi:Z_{I,K}\longrightarrow\PP_K^{s-1}
\end{equation*}
for the relation projection.  Every geometric fiber of $\varpi$ is finite, so $\varpi$ is
quasi-finite.  Moreover, $Z_{I,K}$ is a pure $(s-1)$-dimensional local complete intersection
of codimension $r$ in $\A_K^r\times\PP_K^{s-1}$ and is therefore Gorenstein.  If
$\mathcal O_Z(1)=\varpi^*\mathcal O_{\PP^{s-1}}(1)$, then
\begin{equation}\label{eq:relation-incidence-canonical}
 \omega_{Z_{I,K}}\simeq\mathcal O_Z(r-s).
\end{equation}
These assertions commute with extension of the ground field.
\end{proposition}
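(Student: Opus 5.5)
The plan is to treat $Z_{I,K}$ as a zero locus of a section of a rank-$r$ bundle on the smooth $(r+s-1)$-dimensional ambient space $P=\A_K^r\times\PP_K^{s-1}$, and to read off every assertion from dimension counting and adjunction.

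\emph{Step 1: the equations as a section.} The $r$ equations $H_Ic=0$ are linear in the relation coordinate. They therefore define a section $\sigma$ of $\mathcal O_P(1)^{\oplus r}$, namely the remainder of $\sum_i c_ix^i$ modulo $g$, and $Z_{I,K}=V(\sigma)$.

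\emph{Step 2: quasi-finiteness and dimension.} A geometric fiber of $\varpi$ over $[c]$ is the set of monic degree-$r$ divisors of the nonzero polynomial $f_c$. Since $N=\max I\ge r$, $f_c$ is nonzero, so there are finitely many such divisors; this is the argument of Theorem~\ref{thm:expected-codimension}. Hence $\dim Z_{I,K}\le s-1$. Every component of a zero locus of $r$ equations on the $(r+s-1)$-dimensional smooth $P$ has dimension at least $s-1$. So $Z_{I,K}$ is pure of dimension $s-1$, and it is nonempty since $g=x^r$ divides $x^N$.

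\emph{Step 3: local complete intersection and Gorenstein.} On the regular ambient $P$, local generators cutting out codimension exactly $r$ with $r$ equations form a regular sequence in the local rings, by Cohen--Macaulayness of $P$. So $Z_{I,K}$ is a local complete intersection of codimension $r$, and local complete intersections are Gorenstein. Alternatively, one can cite Proposition~\ref{prop:incidence-factorization-chart} and Theorem~\ref{thm:projective-factorization-CI}: $Z_{I,K}$ is the open chart $A_r\ne0$ of the complete intersection $\overline{\mathfrak Z}_I$, after identifying $H$ through the unique monic quotient. I will use the direct Koszul argument, since it avoids checking that identification scheme-theoretically.

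\emph{Step 4: the canonical sheaf by adjunction.} The normal bundle of $Z_{I,K}$ is $\mathcal O_Z(1)^{\oplus r}$. The canonical sheaf of the ambient space is $\omega_P=\mathcal O_{\A^r}\boxtimes\mathcal O_{\PP^{s-1}}(-s)$. Adjunction for the regular section then gives
\[
 \omega_Z\simeq\omega_P|_Z\otimes\det\mathcal O_Z(1)^{\oplus r}=\mathcal O_Z(r-s).
\]
As a check against \eqref{eq:incidence-canonical-class}: on the monic chart the $G$-factor becomes trivial, and $\mathcal O(N-r-s,r-s)$ restricted there, with $\xi=H_G+H_H$, should agree with this.

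\emph{Step 5: base change.} Compatibility with extension of the ground field holds because each step is defined by the same integral equations. Dimension and fiber finiteness are geometric statements, regular sequences stay regular under flat field extension, and the adjunction isomorphism commutes with flat base change.

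The main obstacle is the regular-sequence step, because the purity of dimension must hold at every point, including over a nonreduced or boundary locus. The nonemptiness and the upper dimension bound from finite fibers make this automatic, since Cohen--Macaulay ambient and codimension equal to the number of equations suffice.
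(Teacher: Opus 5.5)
Your proposal is correct and follows the paper's proof essentially step for step. The shared steps are: finite fibers of $\varpi$ from the finitely many monic degree-$r$ divisors of $f_c$; purity of dimension $s-1$ from Krull's bound combined with that fiber bound; a regular sequence in the regular local rings of $\A_K^r\times\PP_K^{s-1}$; and adjunction with $\omega=\operatorname{pr}_2^*\mathcal O(-s)$ and normal bundle $\mathcal O_Z(1)^{\oplus r}$.

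One small correction: $f_c\neq0$ holds because $c\neq0$ and the monomials are linearly independent, not because $N\ge r$. The hypothesis $N\ge r$ is what gives nonemptiness via $(x^r,[x^N])$, as you also note.
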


\begin{proof}
Theorem~\ref{thm:expected-codimension} gives quasi-finiteness, and
$(x^r,[x^N])$ gives nonemptiness.  In $Y=\A_K^r\times\PP_K^{s-1}$,
the incidence is cut out by $H_I(g)c$, a section of
$\operatorname{pr}_2^*\mathcal O(1)^{\oplus r}$.
Every irreducible component of the zero scheme has dimension at
least $\dim Y-r=s-1$ by Krull's height theorem, and at most $s-1$
by quasi-finiteness of $\varpi$.  Its local defining ideal therefore
has height $r$ and is generated by $r$ elements in the regular
local rings of $Y$.  These elements form a regular sequence.
Adjunction with $\omega_Y=\operatorname{pr}_2^*\mathcal O(-s)$
gives $\omega_{Z_{I,K}}\simeq\mathcal O_Z(r-s)$.
The argument survives every field extension.
\end{proof}

\begin{remark}[Quasi-finite need not mean finite]
\label{rem:relation-projection-not-finite}
The relation projection $\varpi$ is generally not proper.  For
$r=s=N=2$ and $I=\{0,2\}$, one has $Z_I\simeq\A^1$, while $\varpi$ identifies it with an
affine chart of $\PP^1$.  By contrast, the projection
$p:Z_{I,K}\to X_{I,K}^{(s)}$ is projective, because it is the restriction of the first
projection from $\A^r\times\PP^{s-1}$.
\end{remark}

\begin{proposition}[Root-weight contraction]
\label{prop:root-weight-contraction}
Let $K$ be a field and assume $N=\max I\ge r$.  The action
\begin{equation}\label{eq:root-weight-contraction-action}
 t\cdot A_j=t^{r-j}A_j\qquad(0\le j\le r-1)
\end{equation}
of $\Gm$ on $\A_K^r$ preserves $X_{I,K}^{(s)}$ and extends to an $\A^1$-contraction whose
value at $t=0$ is the vertex $g_0=x^r$.  After base change to an algebraic closure, every
irreducible component of $X_I^{(s)}$ contains $g_0$.  Consequently every nonempty
fixed-support block is geometrically connected, and every geometrically normal block is
geometrically integral.
\end{proposition}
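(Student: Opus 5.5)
The plan is to check the weighted action on the ideal, use the weights to build the contraction, and then read off the component and connectedness statements.

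\emph{Weighted homogeneity.} Give $A_j$ root weight $r-j$, as in \eqref{eq:root-weight}, so that $t\cdot G(x)=t^rG(x/t)$. Then $x\mapsto tx$ rescales remainders: if $x^i\equiv\sum_b q_{b,i}x^b \pmod G$, the substitution gives $q_{b,i}(t\cdot A)=t^{i-b}q_{b,i}(A)$, which is \eqref{eq:entry-weight}. Thus $H_I(t\cdot A)=D_r(t)\,H_I(A)\,D_I(t)^{-1}$ with diagonal matrices $D_r(t)=\operatorname{diag}(t^{-b})$ and $D_I(t)=\operatorname{diag}(t^{-i})$, up to orientation. Every $s$-minor is therefore weighted-homogeneous, so $J_I^{(s)}$ is a homogeneous ideal for a positive grading. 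This makes the ideal $\Gm$-stable, with no invertibility of $t$ needed on the ideal level.

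\emph{Contraction and components.} All weights $r-j$ are positive, so the coaction $A_j\mapsto t^{r-j}A_j$ is defined over $K[t]$. It gives a morphism $\A^1\times X_I^{(s)}\to X_I^{(s)}$, because homogeneous generators map into the extended ideal. At $t=0$ it lands at the origin $A=0$, which is $g_0=x^r$; this point lies in $X_I^{(s)}$ by Theorem~\ref{thm:expected-codimension}, since every column of index at least $r$ vanishes there and $\max I\ge r$. Over $\overline K$, each irreducible component $C$ is stable under the connected group $\Gm$. For $x\in C$, the orbit closure of $x$ is the image of $\A^1$, which is irreducible, meets $C$, and stays inside $X$. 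The cleaner route is through the graded ring: minimal primes of a positively graded ring are homogeneous, hence contained in the irrelevant ideal $(A_0,\ldots,A_{r-1})$. So every component passes through $g_0$. I expect this graded-prime argument to be the main point to state carefully, since the orbit-closure version needs a separate check that $\Gm$ preserves each component.

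\emph{Consequences.} Every nonempty block is connected over $\overline K$, since all its components share $g_0$; hence it is geometrically connected. If the block is geometrically normal, its local ring at $g_0$ is a normal local ring and therefore a domain. So only one component passes through $g_0$, and since every component does, the scheme is irreducible. Normality gives reducedness, so the block is geometrically integral. Nonemptiness itself comes from Theorem~\ref{thm:expected-codimension}(ii); for $\max I<r$ the block is empty, and the statement is vacuous.
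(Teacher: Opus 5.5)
Your proof is correct and is essentially the paper's argument: the scaling identity $q_{b,i}(t\cdot A)=t^{i-b}q_{b,i}(A)$ makes every $s$-minor root-weight homogeneous, and positive weights extend the action to an $\A^1$-contraction onto $g_0=x^r$, which lies in the block because its $N$th column vanishes; connectedness and integrality then follow because every geometric component passes through $g_0$. The only differences are cosmetic: you show that every component contains $g_0$ using homogeneity of minimal primes in the positively graded quotient, where the paper uses orbit closures inside $\Gm$-stable components; and you deduce integrality from the normal local ring at $g_0$ being a domain, where the paper uses that the components of a normal scheme are open and closed.
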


\begin{proof}
The scaling identity $G_{t\cdot A}(x)=t^rG_A(x/t)$ gives $q_{b,i}=0$ for $b>i$ and, for
$b\le i$,
\begin{equation}\label{eq:root-weight-remainder-scaling}
 q_{b,i}(t\cdot A)=t^{i-b}q_{b,i}(A).
\end{equation}
Every minor is root-weight homogeneous, so the ideal is stable.
Positive coordinate weights extend the action to $\A^1$ with limit
$g_0$; the $N$th column vanishes there, placing $g_0$ in the block.
Over an algebraic closure, connectedness of $\Gm$ preserves each
irreducible component $C$.  The closure of every orbit in $C$ contains
$g_0$, so all components meet there.  Finally, the components of a
Noetherian normal scheme are disjoint and open-and-closed; a connected
normal geometric fiber is therefore integral.
\end{proof}

\begin{remark}[Root-weight contraction of all rank strata]
\label{cor:vertex-maximal-defect}
Let $K$ be a field, let $N=\max I\ge r$, and write
\[
 A=I\cap\{0,\ldots,r-1\},\qquad
 U=I\cap\{r,r+1,\ldots,N\},\qquad
 u_I=|U|.
\]
For $1\le\delta\le s$, set $X_I^{[\delta]}=V(I_{s-\delta+1}(H_I))$.
This deep-rank scheme is invariant under the root-weight contraction, and
\[
 X_I^{[\delta]}\ne\varnothing
 \quad\Longleftrightarrow\quad
 \delta\le u_I.
\]
After base change to an algebraic closure, every irreducible component of
$X_{I,\mathrm{red}}^{[\delta]}$ contains $g_0=x^r$.  Consequently
\[
 \dim |X_I^{[\delta]}|=\dim_{g_0}|X_I^{[\delta]}|\quad(1\le\delta\le u_I),
 \qquad
 \max_{g\in X_I^{(s)}}\bigl(s-\rank H_I(g)\bigr)=u_I.
\]
\end{remark}

\begin{proof}
All deep-rank ideals are homogeneous by
\eqref{eq:root-weight-remainder-scaling}.  At $g_0$, the standard
columns indexed by $A$ survive and those indexed by $U$ vanish, giving
rank $s-u_I$.  At every point the standard columns already force rank
at least $s-u_I$.  This proves the nonemptiness and maximal-defect
claims; Proposition~\ref{prop:root-weight-contraction}'s orbit argument
gives the component and dimension assertions.
\end{proof}

\section{Profile degrees and formal local models}\label{sec:formal}

Throughout this appendix, let $K$ be algebraically closed of
characteristic zero and assume
\eqref{eq:general-support-normalization}; in particular,
$2\le s\le r<N$.  Retain the profiles over $U_I^{\rm reg}$ from
Section~\ref{sec:monodromy}, with $E_\pi=(\cF_\pi)_{\rm red}$.
Since $B^{\rm prim}\subseteq\{0,\ldots,m\}$ has $s$ elements,
$m\ge s-1$.
The fixed zero root supplies the full piece's Artin factor;
occupancy controls its reduced image.

\Needspace{10\baselineskip}
\subsection{Individual mixed degrees and the information they retain}
\label{v6:degree:section}

For the characteristic-zero profiles of Section~\ref{sec:monodromy},
we compute mixed degrees in $\eta,\xi$, including contracted profiles.

\subsubsection{A reduced profile and its image degree}
\label{r19:sec:individual-degrees}
For $\pi=(e,\boldsymbol m)$, expand the weights into a list
$w_1,\ldots,w_m$, with $m_\nu$ copies of $w(\nu)$, including zeros.
Abbreviate
\begin{equation}\label{v6:degree:data}
 D_\pi=D(\boldsymbol m),\qquad
 \ell_\pi=\ell(\boldsymbol m)=\#\{i:w_i>0\}.
\end{equation}
Thus $D_\pi$ is the reduced relation-cover degree.

\begin{theorem}[Individual profile mixed degrees]
\label{v6:degree:formula}
For every admissible profile and every $0\le j\le s-1$,
\begin{equation}\label{v6:degree:explicit}
 \boxed{\displaystyle
 \delta_{\pi,j}=\int_{W_\pi}\eta^j\xi^{s-1-j}
  =\frac{D_\pi}{\binom mj d^j}\,
    e_j(w_1,\ldots,w_m).}
\end{equation}
Here $e_j$ denotes the elementary symmetric polynomial and $e_0=1$.
In particular, with $k_\pi=\min\{\ell_\pi,s-1\}$,
\begin{equation}\label{v6:degree:image-degree}
 \deg\overline Y_\pi
  =\frac{D_\pi}{\binom m{k_\pi}d^{k_\pi}}
      e_{k_\pi}(w_1,\ldots,w_m).
\end{equation}
There is no factor $\binom ae$ in these formulas for reduced
components.
\end{theorem}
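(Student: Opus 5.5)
The plan is to compute all the mixed degrees on an explicit product-of-curves model. This avoids any case split between dominant and contracted profiles and needs no complete-progression hypothesis. Assign to the $m$ root blocks the expanded weights $w_1,\ldots,w_m$, and fix for block $i$ a subset $S_i\subseteq\mathbb Z/d\mathbb Z$ of size $w_i$ representing its necklace. As in \eqref{v6:norm:single-block}, parametrize block $i$ by $[u_i:v_i]\in\PP^1$. The relation factor is $v_i^dX^d-u_i^dZ^d$ and the selected factor is $\prod_{b\in S_i}(v_iX-\zeta^bu_iZ)$. Multiplying these together with the fixed factors $X^a,X^e$ gives a morphism
\[
 \Phi:(\PP^1)^m\longrightarrow\PP^r\times\PP^{N-r},
\]
whose relation component lands in $\PP(S_{dm})\subset\PP^N$ after multiplication by $X^a$.

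\textbf{Step 1: cut out $W_\pi$ inside $(\PP^1)^m$.} The coefficients of $\prod_i(v_i^dX^d-u_i^dZ^d)$ are, up to sign, the elementary symmetric functions of the points $[u_i^d:v_i^d]$. Each is multilinear of multidegree $(d,\ldots,d)$ in the $(u_i,v_i)$. The relation lies in $\PP(V_I)$ exactly when the $m+1-s$ coefficients indexed outside $B^{\rm prim}$ vanish. Hence the locus $P=\Phi^{-1}(\overline{\mathfrak Z}_I)$ is cut out by $m-s+1$ sections of $\mathcal O(d,\ldots,d)$. I will show it is a complete intersection of dimension $s-1$. Its fibers over $\PP(V_I)$ are finite, because a relation determines its root multiset, so a regular sequence follows from the dimension count, as in Theorem~\ref{thm:projective-factorization-CI}. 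Over $U_I^{\rm reg}$ the map $P\to\overline{\mathfrak Z}_I$ is étale. Its image is exactly the profile piece $W_\pi$, each point being hit $|{\rm Stab}|=d^mm!/D_\pi$ times. Here the stabilizer is taken in the group of block permutations and rotations that preserve the chosen weight labelling, which is the count behind \eqref{eq:profile-orbit-degree}. Hence
\[
 \Phi_*[P]=\frac{d^m m!}{D_\pi}[W_\pi].
\]
Any components of $P$ lying over the discriminant have dimension at most $s-2$ by Lemma~\ref{lem:profile-boundary-exclusion}, so they cannot occur in a pure $(s-1)$-dimensional complete intersection. Making this covering-degree bookkeeping precise is the step I expect to be the main obstacle: the labelled-block degree has to be matched with the unlabelled profile degree $D(\boldsymbol m)$, and the boundary components have to be excluded.

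\textbf{Step 2: pull back the classes.} Let $\theta_i$ be the hyperplane class of the $i$th factor, so $\theta_i^2=0$. From the degrees of the factors I expect
\[
 \Phi^*\xi=d\sum_i\theta_i,\qquad \Phi^*\eta=\sum_iw_i\theta_i .
\]
The contribution of $X^e$ is a linear embedding and is harmless, exactly as in the proof of Theorem~\ref{v6:norm:product}. The projection formula then gives
\[
 \delta_{\pi,j}=\frac{D_\pi}{d^mm!}\int_{(\PP^1)^m}\Bigl(d\sum\theta_i\Bigr)^{m-s+1}\Bigl(d\sum\theta_i\Bigr)^{s-1-j}\Bigl(\sum w_i\theta_i\Bigr)^j .
\]
Since $\theta_i^2=0$, only squarefree monomials survive. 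The integral equals $d^{m-j}(m-j)!\,j!\,e_j(w_1,\ldots,w_m)$, and dividing gives $D_\pi e_j(w)/(\binom mj d^j)$, which is \eqref{v6:degree:explicit}. No Artin factor $\binom ae$ appears, because $P$ maps onto the reduced branch.

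\textbf{Step 3: the image degree.} By Theorem~\ref{thm:dominant-profile-components} and Lemma~\ref{lem:profile-rank-image}, $W_\pi\to\overline Y_\pi$ has generic fiber $\PP^{s-k_\pi-1}$, and this fiber is linearly embedded in relation space. Hence $\xi$ restricts to its hyperplane class while $\eta$ is pulled back from $\overline Y_\pi$. The projection formula therefore gives
\[
 \deg\overline Y_\pi=\int_{W_\pi}\eta^{k_\pi}\xi^{s-1-k_\pi}=\delta_{\pi,k_\pi},
\]
which proves \eqref{v6:degree:image-degree}.
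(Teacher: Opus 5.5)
Your argument is correct, but it follows a genuinely different route from the paper's.

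\emph{The paper's route.} The paper stays on $W_\pi$. It picks $z_1,\ldots,z_j$ with distinct $d$th powers so that the evaluation slice meets the regular relation open, using Lemma~\ref{lem:uniform-subset-rank}. It then cuts $W_\pi$ by the factor hyperplanes $G(z_i)=0$ and by $s-1-j$ relation hyperplanes through a regular $[F_0]$. Each intersection point has length one because $H(z_i)$ is a unit. Finally it counts the points by double counting ordered $j$-tuples of roots in distinct blocks under the transitive wreath-product action.

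\emph{Your route.} You globalize the block parametrization that the paper uses only for complete progressions in Theorem~\ref{v6:norm:product}. Your $P$ is the preimage of $\PP(V_{dB^{\rm prim}})$ under the finite flat map $(\PP^1)^m\to\PP^m$ of degree $d^mm!$. It is therefore a generically \'etale complete intersection of class $(d\sum_i\theta_i)^{m-s+1}$. Once $\deg(P\to W_\pi)=d^mm!/D_\pi$ is known, the relation $\theta_i^2=0$ yields the whole mixed-degree vector at once.

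\emph{What each buys.} Your route needs no transversal slice and no local length computation. It treats dominant and contracted profiles identically, and it explains why $\prod_i(1+w_it)$ governs the degrees. The paper's route works on the incidence itself, so the same local computation also produces the Artin length $\binom ae$ used in Corollary~\ref{thm:profile-multidegrees}. Your Step~3 coincides with the paper's.

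\emph{Two precision points.}
First, the stabilizer in Step~1 consists of pairs $(\sigma,c)$ with $S_{\sigma(i)}+c_i=S_i$. So it preserves necklace types, not just weights. Its order is $\prod_\nu h_\nu^{m_\nu}m_\nu!$, which gives $d^mm!/D_\pi$ by \eqref{eq:profile-orbit-degree}.
Second, to exclude components of $P$ lying over the discriminant or over $c_N=0$, you need only finiteness of $P\to\PP(V_I)$ together with purity of the complete intersection. You do not need Lemma~\ref{lem:profile-boundary-exclusion}, which concerns components of $\cF_{I,r}$.
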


\begin{proof}
For $j=0$, the assertion is the reduced cover degree $D_\pi$.
Let $1\le j\le s-1$.

\emph{Choosing a regular relation slice.}
Choose nonzero $z_1,\ldots,z_j$ with pairwise distinct $d$th powers
so that
\begin{equation}\label{v6:degree:evaluations}
 F(z_1)=\cdots=F(z_j)=0
\end{equation}
are independent and their common zero space meets the regular relation
open.  Indeed, over the open torus of rank-$j$ evaluation tuples the
relation incidence is an integral projective kernel bundle.  Its
regular-relation open is nonempty by Lemma~\ref{lem:uniform-subset-rank},
using $j$ roots in distinct blocks.  The smooth bundle projection is
open, so such tuples contain a nonempty open and hence a $K$-point,
also when $j=s-1$.

Fix such a tuple and a regular point $[F_0]$ of its linear relation
slice.  Choose $s-1-j$ hyperplanes through $[F_0]$ whose restrictions
cut that slice at $[F_0]$ as a reduced point; none is needed for $j=s-1$.
On $W_\pi$, impose these further hyperplanes together with the $j$
factor hyperplanes
\begin{equation}\label{v6:degree:factor-evaluations}
 G(z_1)=\cdots=G(z_j)=0.
\end{equation}
Since $F(z_i)=G(z_i)H(z_i)$, the section ideal contains all the
chosen relation equations.  Their reduced intersection is $[F_0]$,
so the zero scheme is finite and lies scheme-theoretically in
$q^{-1}([F_0])$, away from the relation boundary.

\emph{Intersection lengths on the reduced and full incidence.}
At a contributing point, $z_i$ is a simple root of $G$ and
$H(z_i)$ is a unit.  In local line-bundle trivializations,
\[
 G(z_i)=F(z_i)/H(z_i)
\]
Thus the section ideal pulls back the maximal ideal of $[F_0]$,
generated by $s-1$ independent parameters.  The reduced profile cover
is \emph{\'etale} here, so each intersection has length one.
The section of
$(p^*\mathcal O_{\PP^r}(1))^{\oplus j}\oplus
(q^*\mathcal O_{\PP(V_I)}(1))^{\oplus(s-1-j)}$
is therefore regular at every point of its zero scheme.  Its length
computes $\eta^j\xi^{s-1-j}\cap[W_\pi]$
\cite[Tag 0FA8, Lemma 42.44.1]{Stacks}; if the zero scheme is empty,
this top Chern class is zero.  No global smoothness or
Cohen--Macaulayness of $W_\pi$ is used.

In the full incidence, \eqref{eq:factorization-e-splitting} identifies
the completed local ring at the same point with
$\mathcal A_{a,e}[[v_1,\ldots,v_{s-1}]]$, where the $v_i$ are
relation parameters.  The same $H(z_i)$ remain units.  Freeness over
$K[[v_1,\ldots,v_{s-1}]]$ makes these parameters a regular sequence,
with quotient $\mathcal A_{a,e}$ of length $\binom ae$.

\emph{Counting the reduced points.}
Label the $m$ nonzero blocks of $F_0$ and their $d$ cyclic root
positions.  The wreath-product action on these labels preserves the profile
subset orbit and acts transitively on the $(m)_jd^j$ ordered
$j$-tuples of roots in distinct blocks.  Each root subset of profile $\pi$ contains precisely
$j!e_j(w_1,\ldots,w_m)$ such tuples.  Double-counting pairs of a
profile subset and a contained tuple, and using transitivity,
shows that the number of profile subsets containing the fixed
tuple is
\[
 \frac{D_\pi j!e_j(w_1,\ldots,w_m)}{(m)_j d^j}
 =\frac{D_\pi e_j(w_1,\ldots,w_m)}{\binom mj d^j}.
\]
This is the length of the intersection just constructed.  If
$j>\ell_\pi$, both the intersection and $e_j(w)$ are zero, so
the argument also covers contracted profiles.

\emph{Passing to the image degree.}
Lemma~\ref{lem:profile-rank-image} and
Theorem~\ref{thm:dominant-profile-components} give
$\dim\overline Y_\pi=k_\pi$ and generic fiber
$\PP^{s-1-k_\pi}_{K(\overline Y_\pi)}$ for $p|_{W_\pi}$.
On this fiber the relation map is a linear inclusion, so $\xi$
restricts to $\mathcal O(1)$ of top degree one, also for $k_\pi=0$
\cite[Tags 02TW and 02R4]{Stacks}.  Thus the coefficient in the
proper pushforward is one; cycles over proper closed subsets of the
image cannot contribute in dimension $k_\pi$.  Consequently
\[
 p_*\bigl(\xi^{s-1-k_\pi}\cap[W_\pi]\bigr)
 =[\overline Y_\pi].
\]
The projection formula gives
$\delta_{\pi,k_\pi}=\deg\overline Y_\pi$, proving
\eqref{v6:degree:image-degree}.
\end{proof}

\begin{corollary}[Weight recovery and variance]
\label{v6:degree:recovery}
If $s\ge3$, then
\begin{equation}\label{v6:degree:variance}
 \delta_{\pi,1}^2-\delta_{\pi,0}\delta_{\pi,2}
 =\frac{D_\pi^2}{m^2(m-1)d^2}
   \left(m\sum_{i=1}^m w_i^2-
                   \left(\sum_{i=1}^m w_i\right)^2\right).
\end{equation}
In particular, this defect is zero exactly when all $m$ weights,
including zero weights, are equal.

For a full progression $B^{\rm prim}=\{0,1,\ldots,m\}$, the entire
mixed-degree vector recovers the expanded weight multiset by
\begin{equation}\label{v6:degree:generating-polynomial}
 \boxed{\displaystyle
 \prod_{i=1}^m(1+w_i t)
 =\sum_{j=0}^m\binom mj d^j
      \frac{\delta_{\pi,j}}{D_\pi}t^j.}
\end{equation}
For arbitrary primitive support, the vector determines the
elementary symmetric functions appearing on the right through
degree $s-1$; no recovery of the remaining coefficients is asserted.
\end{corollary}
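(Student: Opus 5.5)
The plan is to read everything off the explicit formula \eqref{v6:degree:explicit} of Theorem~\ref{v6:degree:formula}. That formula says $\delta_{\pi,j}/D_\pi=e_j(w)/(\binom mj d^j)$ for $0\le j\le s-1$. When $s\ge3$, both $j=1$ and $j=2$ are in range, and we get
\[
 \delta_{\pi,0}=D_\pi,\qquad
 \delta_{\pi,1}=\frac{D_\pi}{md}\,e_1(w),\qquad
 \delta_{\pi,2}=\frac{2D_\pi}{m(m-1)d^2}\,e_2(w).
\]
Substitute $e_1=p_1=\sum w_i$ and $2e_2=p_1^2-p_2$. Then
\[
 \delta_{\pi,1}^2-\delta_{\pi,0}\delta_{\pi,2}
 =\frac{D_\pi^2}{d^2}\Bigl(\frac{p_1^2}{m^2}-\frac{p_1^2-p_2}{m(m-1)}\Bigr).
\]
Over the common denominator $m^2(m-1)$, the numerator is $(m-1)p_1^2-mp_1^2+mp_2=mp_2-p_1^2$, which is \eqref{v6:degree:variance}. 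This step needs $m\ge2$, which holds because $m\ge s-1\ge2$.

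For the vanishing statement, I will use Cauchy--Schwarz: $mp_2-p_1^2=\tfrac12\sum_{i,j}(w_i-w_j)^2$. This is zero exactly when all $w_i$ are equal, including the zero weights. The prefactor $D_\pi^2/(m^2(m-1)d^2)$ is nonzero, so the defect vanishes exactly in that case.

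For the full progression $B^{\rm prim}=\{0,\ldots,m\}$, we have $s=m+1$, so \eqref{v6:degree:explicit} holds for every $0\le j\le m$. Multiplying it by $\binom mj d^j/D_\pi$ gives $e_j(w)$, and summing against $t^j$ gives $\prod_i(1+w_it)$. That is \eqref{v6:degree:generating-polynomial}.

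The remaining claim is recovery of the weight multiset from the mixed-degree vector. First, $D_\pi=\delta_{\pi,0}$, and $m$ and $d$ are fixed by the support, so the right side of \eqref{v6:degree:generating-polynomial} is determined. The weights are integers in $[0,d]$, so the polynomial $\prod_i(1+w_it)$ determines the multiset of nonzero $w_i$ through its reciprocal roots $-w_i$. The number of zero weights is then $m$ minus the degree of this polynomial.

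For arbitrary primitive support, the same formula recovers $e_j(w)$ only for $j\le s-1$, and I would simply state this. The only real subtlety is checking that the index ranges $j\le s-1$ cover what is used: $j=2$ needs $s\ge3$, and $j=m$ needs $s=m+1$. I expect no serious obstacle beyond this bookkeeping.
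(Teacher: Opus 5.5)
Your proposal is correct and follows the paper's argument: you substitute $e_1=\sum_i w_i$ and $2e_2=(\sum_i w_i)^2-\sum_i w_i^2$ into the explicit mixed-degree formula of Theorem~\ref{v6:degree:formula}, use $m\sum_i w_i^2-(\sum_i w_i)^2=\sum_{i<j}(w_i-w_j)^2$ for the equality criterion, and use $s=m+1$ for the full progression, recovering the nonzero weights by factoring and the number of zero weights from $m$. Your explicit checks that $m\ge s-1\ge2$ and that $\delta_{\pi,0}=D_\pi$ supplies the normalization are details the paper leaves implicit.
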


\begin{proof}
The identities
$e_1(w)=\sum_iw_i$ and
$2e_2(w)=(\sum_iw_i)^2-\sum_iw_i^2$, substituted in
\eqref{v6:degree:explicit}, give \eqref{v6:degree:variance}.
The numerator there also equals
$\sum_{i<j}(w_i-w_j)^2$, giving the equality criterion.
For a full progression, $s=m+1$, so
\eqref{v6:degree:explicit} supplies all coefficients of
$\prod_i(1+w_it)$.  Factoring that polynomial recovers every
positive weight with multiplicity, and the known number $m$
recovers the number of zero weights.  The same formula identifies
the available initial coefficients for an arbitrary support.
\end{proof}

\begin{remark}\label{v6:degree:scope}
These are numerical invariants of the reduced incidence with its specified
classes $\eta,\xi$.  Recovering all weights in
\eqref{v6:degree:generating-polynomial} need not recover necklace shape;
Section~\ref{v6:norm:section} examines that remaining information.
The factor $\binom ae$ enters only when these reduced cycles are summed
inside the full incidence, as follows.
\end{remark}

\subsubsection{Summing reduced profiles and restoring scheme multiplicities}
\label{r19:sec:cycle-conservation}
Summing first at fixed $e$, then weighting by $\binom ae$, recovers
the full incidence cycle, including its boundary.

\begin{corollary}[Two-stage profile mixed-degree conservation]
\label{thm:profile-multidegrees}
Let $K$ be algebraically closed of characteristic zero and assume
\eqref{eq:general-support-normalization}.  Retain $W_\pi$, $\overline Y_\pi$, $k_\pi$, and $\delta_{\pi,j}$ from
Theorems~\ref{thm:dominant-profile-components} and
\ref{v6:degree:formula}.  Put $b_\pi=\binom ae$.  Then
\begin{equation}\label{eq:profile-cycle-decomposition}
 [\overline{\mathfrak Z}_I]=\sum_\pi b_\pi[W_\pi],
 \qquad \deg(q|_{W_\pi})=D(\boldsymbol m).
\end{equation}
The first equality is in $A_{s-1}(\overline{\mathfrak Z}_I)$.
The reduced mixed degrees satisfy
\[
 \delta_{\pi,0}=D(\boldsymbol m),\qquad
 \delta_{\pi,j}=0\ (j>k_\pi),\qquad
 \delta_{\pi,k_\pi}=\deg\overline Y_\pi,
\]
and in $A_{k_\pi}(\PP^r)$ one has
\begin{equation}\label{eq:profile-image-pushforward}
 p_*\bigl(\xi^{s-1-k_\pi}\cap[W_\pi]\bigr)
 =[\overline Y_\pi].
\end{equation}

For each $e$ in \eqref{eq:e-range-all}, with $q_e=r-e$, let
\begin{equation}\label{eq:fixed-e-incidence}
 \begin{aligned}
 Z'_e&=\{([G'],[H'])\in\PP^{q_e}\times\PP^{M-q_e}:
                [G'H']\in\PP(V_{dB^{\rm prim}})\},\\
 \iota_e([G'],[H'])&=([X^eG'],[X^{a-e}H']).
 \end{aligned}
\end{equation}
Here $V_{dB^{\rm prim}}$ consists of binary forms of degree $M$, and
$Z'_e$ has its scheme-theoretic inverse-image structure under binary
multiplication.  It is a reduced complete intersection of dimension
$s-1$, and its irreducible components map isomorphically to the
$W_\pi$ with $e(\pi)=e$.  In
$A_{s-1}(\PP^r\times\PP^{N-r})$, the two-stage cycle formulas are
\begin{equation}\label{eq:fixed-e-cycle-decomposition}
 \begin{aligned}
 [\overline{\mathfrak Z}_I]
   &=\sum_e\binom ae(\iota_e)_*[Z'_e],\\
 (\iota_e)_*[Z'_e]
   &=H_G^eH_H^{a-e}(H_G+H_H)^{M-s+1}.
 \end{aligned}
\end{equation}
On the second line a Chow polynomial denotes its cap product with the
fundamental class of the ambient product.  Consequently
\begin{equation}\label{eq:fixed-e-multidegree-conservation}
 \boxed{\displaystyle
 \sum_{\pi:e(\pi)=e}\delta_{\pi,j}
     =\binom{M-j}{q_e-j}\quad(0\le j\le s-1).}
\end{equation}
Its endpoints are
\begin{equation}\label{eq:fixed-e-endpoint-conservation}
 \begin{aligned}
 \sum_{\pi:e(\pi)=e}D(\boldsymbol m)&=\binom M{q_e},\\
 \sum_{\substack{\pi:e(\pi)=e\\\ell(\boldsymbol m)\ge s-1}}
                  \deg\overline Y_\pi
     &=\binom{M-s+1}{q_e-s+1}.
 \end{aligned}
\end{equation}
Weighting the fixed-$e$ vectors gives
\begin{equation}\label{eq:profile-multidegree-conservation}
 \boxed{\displaystyle\sum_\pi\binom ae\delta_{\pi,j}
 =\sum_e\binom ae\binom{M-j}{r-e-j}
 =\binom{N-j}{r-j}\quad(0\le j\le s-1).}
\end{equation}
In particular,
\begin{equation}\label{eq:profile-endpoint-conservation}
 \sum_\pi\binom ae D(\boldsymbol m)=\binom Nr,
 \qquad
 \sum_{\pi\ \mathrm{dominant}}\binom ae\deg\overline Y_\pi
 =\binom{N-s+1}{r-s+1}.
\end{equation}
Moreover,
$p_*[\overline{\mathfrak Z}_I]=[\overline X_I^{(s)}]$
in $A_{s-1}(\PP^r)$.  For a dominant profile, the recovered-relation
rational map $\rho_\pi:\overline Y_\pi\dashrightarrow\PP(V_I)$ has
degree $D(\boldsymbol m)$.
\end{corollary}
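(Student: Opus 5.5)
We organize the proof around the fixed-$e$ incidences $Z'_e$ and the factorization splitting \eqref{eq:factorization-e-splitting}.

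\emph{Step 1: $Z'_e$ is a reduced complete intersection.} Apply Theorem~\ref{thm:projective-factorization-CI} with degree $M$, selected degree $q_e$ and support $dB^{\rm prim}$. This shows that $Z'_e$ is a complete intersection of $M-s+1$ divisors of bidegree $(1,1)$ in $\PP^{q_e}\times\PP^{M-q_e}$, finite flat over $\PP(V_{dB^{\rm prim}})$. The inequalities \eqref{eq:e-range-all} make this application legitimate; the degenerate endpoints $q_e\in\{0,M\}$ are handled the same way. Since the support starts at $0$, the fixed zero-root allocation is trivial. Hence Theorem~\ref{thm:dominant-profile-components} applied to this support gives generic multiplicity $\binom0{0}=1$ on every component. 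A Cohen--Macaulay scheme that is generically reduced is reduced, so $Z'_e$ is reduced.

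\emph{Step 2: identify $\iota_e$ with the components $W_\pi$.} The map $\iota_e$ is a closed immersion into $\overline{\mathfrak Z}_I$, because it multiplies by fixed monomials and $X^aV_{dB^{\rm prim}}=V_I$. Over $U_I^{\rm reg}$ its image is exactly the $e$-piece $(\Fact_e(x^a)\times\Fact_{q_e})_{\rm red}$. Its components are therefore the $W_\pi$ with $e(\pi)=e$.

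\emph{Step 3: cycle decompositions.} The Artin length $\binom ae$ from Theorem~\ref{thm:dominant-profile-components} gives $[\overline{\mathfrak Z}_I]=\sum_\pi b_\pi[W_\pi]$. Grouping by $e$ gives the first line of \eqref{eq:fixed-e-cycle-decomposition}. For the second line, write $(\iota_e)_*[Z'_e]$ as the pushforward of $(H'_G+H'_H)^{M-s+1}$ along the linear embedding $\PP^{q_e}\times\PP^{M-q_e}\hookrightarrow\PP^r\times\PP^{N-r}$. The class of that embedding is $H_G^{e}H_H^{a-e}$, since the codimensions are $r-q_e=e$ and $(N-r)-(M-q_e)=a-e$. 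The hyperplane classes restrict compatibly.

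\emph{Step 4: degree identities.} Pair the classes with $\eta^j\xi^{s-1-j}=H_G^j(H_G+H_H)^{s-1-j}$. For the fixed-$e$ class this means extracting the coefficient of $H_G^rH_H^{N-r}$ in
\[
H_G^{e+j}H_H^{a-e}(H_G+H_H)^{M-j},
\]
which is $\binom{M-j}{q_e-j}$ and proves \eqref{eq:fixed-e-multidegree-conservation}. Weighting by $\binom ae$ and applying Vandermonde's identity to $\sum_e\binom ae\binom{M-j}{r-e-j}$ gives \eqref{eq:profile-multidegree-conservation}. The endpoint $j=0$ uses $\delta_{\pi,0}=D(\boldsymbol m)$.

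For $j=s-1$, the statements $\delta_{\pi,j}=0$ for $j>k_\pi$ and $\delta_{\pi,k_\pi}=\deg\overline Y_\pi$ both come from Theorem~\ref{v6:degree:formula}. There $e_j(w)=0$ once $j$ exceeds the occupancy, and the image pushforward \eqref{eq:profile-image-pushforward} is already proved. Consequently only dominant profiles survive at $j=s-1$, which gives \eqref{eq:fixed-e-endpoint-conservation} and \eqref{eq:profile-endpoint-conservation}.

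\emph{Step 5: the remaining pushforward statements.} For $p_*[\overline{\mathfrak Z}_I]=[\overline X_I^{(s)}]$, nondominant profiles are contracted and push forward to zero. Each dominant $W_\pi$ maps birationally onto $\overline Y_\pi$. Theorem~\ref{thm:dominant-profile-components} gives the matching multiplicity $\binom ae$ of $\overline X_I^{(s)}$ along $\overline Y_\pi$. Finally, the degree of $\rho_\pi$ equals $\deg(q|_{W_\pi})=D(\boldsymbol m)$, because $p|_{W_\pi}$ is birational onto $\overline Y_\pi$.

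The main obstacle is the claim in Step 2 that the closure of the $e$-piece is exactly $\iota_e(Z'_e)$ scheme-theoretically, including over the boundary. The plan is to argue by dimension and reducedness rather than checking the boundary directly. Each irreducible component of the reduced pure $(s-1)$-dimensional $Z'_e$ dominates relation space, by finite flatness. So each component is the closure of its part over $U_I^{\rm reg}$, and that part is a profile branch with this $e$.
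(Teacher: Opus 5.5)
Your proposal is correct and follows essentially the same route as the paper. Both arguments get reducedness of $Z'_e$ from the complete-intersection (hence Cohen--Macaulay) structure plus generic \'etaleness, identify its components with the $W_\pi$ by dominance over relation space, use the class $H_G^eH_H^{a-e}$ of the linear embedding, extract coefficients and apply Vandermonde, and use birationality of dominant branches for $p_*[\overline{\mathfrak Z}_I]$ and $\deg\rho_\pi$.

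One repair is needed in Step~1. The range \eqref{eq:e-range-all} gives only $0\le q_e\le M$, not the hypotheses $s\le q_e$ (and $q_e<M$) under which Theorems~\ref{thm:projective-factorization-CI} and \ref{thm:dominant-profile-components} are stated. For instance, $I=\{2,5\}$, $r=3$, $e=2$ gives $q_e=1<s=2$. You should therefore argue directly, as the paper does, from Lemma~\ref{lem:binary-multiplication}, which holds for every $0\le q_e\le M$. Flat pullback of the $M-s+1$ linear equations gives the complete intersection. Multiplication is \'etale over the squarefree open containing $x^M-1$, which gives generic reducedness.
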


\begin{proof}
\emph{The reduced pieces with fixed $e$.}
Theorem~\ref{thm:dominant-profile-components} gives the component degrees
and generic lengths, hence \eqref{eq:profile-cycle-decomposition}.
The reduced mixed-degree identities and
\eqref{eq:profile-image-pushforward} are those of
Theorem~\ref{v6:degree:formula}.

By Lemma~\ref{lem:binary-multiplication}, $Z'_e$ is finite flat over
$\PP(V_{dB^{\rm prim}})$.  Flat pullback of its $M-s+1$ linear equations
gives a regular sequence of bidegrees $(1,1)$, so $Z'_e$ is
Cohen--Macaulay of dimension $s-1$.  Every component dominates relation
space.  The squarefree, nonzero-constant-term open contains $x^M-1$;
over it multiplication is \emph{\'etale}.  Thus $Z'_e$ is generically
reduced, and $R_0$ together with $S_1$ makes it reduced.

The map $\iota_e$ is a product of linear closed immersions preserving
both hyperplane bundles.  Over the regular relation open its components
are exactly the profiles with split $e$.  Taking reduced closures
identifies them scheme-theoretically with the corresponding $W_\pi$;
no boundary component is omitted since every component dominates the
base.  This includes $q_e=0,M$, with $\PP^0$ and zero hyperplane class
on the degree-zero factor.

\emph{Cycle classes and the two sums.}
The complete-intersection class of $Z'_e$ pushes forward to
$H_G^eH_H^{a-e}(H_G+H_H)^{M-s+1}$.  Together with the generic lengths,
this proves \eqref{eq:fixed-e-cycle-decomposition}.  Since $\iota_e$
preserves $\eta,\xi$,
\[
 \sum_{\pi:e(\pi)=e}\delta_{\pi,j}
 =\int_{\PP^{q_e}\times\PP^{M-q_e}}
        H_G^j(H_G+H_H)^{M-j}
 =\binom{M-j}{q_e-j},
\]
including zero when $j>q_e$.  The endpoints follow from the reduced mixed-degree identities.  The binomial convention makes all terms
outside \eqref{eq:e-range-all} zero.  Weighting by $\binom ae$ and
using Vandermonde's identity gives \eqref{eq:profile-multidegree-conservation}
and \eqref{eq:profile-endpoint-conservation}; equivalently,
\begin{equation}\label{eq:profile-conservation-intersection}
 \sum_\pi b_\pi\delta_{\pi,j}
 =\int_{\PP^r\times\PP^{N-r}}H_G^j(H_G+H_H)^{N-j}
 =\binom{N-j}{r-j}.
\end{equation}
For the final pushforward, Theorem~\ref{thm:dominant-profile-components}
identifies incidence and determinantal generic multiplicities.
Dominant reduced components push forward birationally and contracted
ones to zero.  For a dominant profile, the same birationality identifies
the degree of the recovered-relation map with
$[K(W_\pi):K(\PP(V_I))]=D(\boldsymbol m)$.
\end{proof}

For deeper rank components and contraction, see
Theorem~\ref{thm:deep-rank-profile-components} and
Example~\ref{ex:occupancy-filter}.

\begin{remark}[The incidence is not generally a resolution]\label{rem:incidence-not-resolution}
A contracted profile is still a top-dimensional component of the
finite flat incidence.  Thus the full incidence is not generally a
resolution of $X_I^{(s)}$.  Such a statement would require removing
contracted components and separately proving smoothness of the source.
The fixed-$e$ cycle decomposition is not a global direct-sum
decomposition of schemes: profile closures can meet over the boundary
$t_a\Disc(\overline F_t)=0$.
\end{remark}

\Needspace{10\baselineskip}
\subsection{Fixed zero-root factors and formal neighborhoods}
\label{r26:formal:local}

\begin{proposition}[Formal profile model]\label{prop:formal-profile-model}
\label{cor:generic-profile-invariants}
Assume that $K$ is algebraically closed of characteristic zero.  For every profile $\pi$,
$E_\pi\to U_I^{\rm reg}$ is a connected finite \emph{\'etale} cover of degree
$D(\boldsymbol m)$, and
\begin{equation}\label{eq:profile-product}
 \cF_\pi\simeq\Fact_e(x^a/K)\times_KE_\pi.
\end{equation}
Put
\begin{equation}\label{eq:zero-factor-algebra}
 A_{a,e}=\mathcal O(\Fact_e(x^a/K))
 \simeq H^*(\Gr(e,a),K).
\end{equation}
Here $H^*(\Gr(e,a),K)$ denotes
$H^*(\Gr(e,\mathbb C^a),\mathbb Z)\otimes_{\mathbb Z}K$
with its usual Borel presentation; no embedding
$K\hookrightarrow\mathbb C$ is required.
At every closed geometric point $\widetilde g\in E_\pi$,
\begin{equation}\label{eq:profile-formal-completion}
 \widehat{\mathcal O}_{\cF_\pi,(x^e,\widetilde g)}
 \simeq A_{a,e}[[z_1,\ldots,z_{s-1}]].
\end{equation}
If $\ell(\boldsymbol m)\ge s-1$, there are an open subscheme $V$ of
$X_I^{(s)}$, whose underlying support is a dense open of the reduced image
$Y_\pi=\overline Y_\pi\cap\A^r$, and an open subscheme of $\cF_\pi$
mapped isomorphically onto $V$ by the forgetful incidence map.
Thus, at every closed geometric point $g$ in a suitable dense open of
$Y_\pi$, one has
\begin{equation}\label{eq:profile-image-completion}
 \widehat{\mathcal O}_{X_I^{(s)},g}
 \simeq H^*(\Gr(e,a),K)[[z_1,\ldots,z_{s-1}]].
\end{equation}
In particular, put $p_0=\min(e,a-e)$.  At such a point the local ring is
a complete intersection of embedding codimension $p_0$, and
\begin{equation}\label{eq:profile-local-invariants}
 \dim T_gX_I^{(s)}=(s-1)+p_0,\qquad
 e_g(X_I^{(s)})=\binom ae.
\end{equation}
It is Gorenstein, and it is reduced, equivalently smooth over $K$, exactly
when $e=0$ or $e=a$.  Thus fixed-$e$ incidence branches have the same
general formal model, irrespective of their global degrees and contractions;
for dominant profiles it describes the ambient determinantal local rings
on the stated dense open.
\end{proposition}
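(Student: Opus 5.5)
The plan is to start from the $e$-splitting \eqref{eq:factorization-e-splitting} over $U_I^{\rm reg}$. Coprimality of $x^a$ and the squarefree $\overline F_t(x^d)$ makes multiplication étale (Lemma~\ref{lem:binary-multiplication}), so
\[
\Fact_r(F_t)\simeq\coprod_e\Fact_e(x^a)\times\Fact_{q_e}(\overline F_t(x^d)).
\]
The second factor is finite étale over the regular base, and its connected components are the wreath-product orbits, by Lemma~\ref{lem:monodromy-transport}. Hence $E_\pi$ is a connected finite étale cover of degree $D(\boldsymbol m)$, and $\cF_\pi$ is the product \eqref{eq:profile-product}.

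\textbf{The Artin factor.} For \eqref{eq:zero-factor-algebra} I would identify $A_{a,e}$ directly. Monic factors $x^a=gh$ with $\deg g=e$ are parametrized by the coefficients of $g$ and $h$, so $A_{a,e}$ is the quotient of $K[e_1,\dots,e_e,f_1,\dots,f_{a-e}]$ by the degree-$1,\dots,a$ components of $(1+\sum e_i)(1+\sum f_j)=1$. This is exactly the Borel presentation of $H^*(\Gr(e,a))$ with $e_i$ and $f_j$ the Chern classes of the tautological sub- and quotient bundles. The presentation is integral, so it holds over any $K$. This matches the length $\binom ae$ already used in Theorem~\ref{thm:dominant-profile-components}.

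\textbf{The formal completion.} For \eqref{eq:profile-formal-completion}, I would complete at $(x^e,\widetilde g)$. The étale factor $E_\pi\to U_I^{\rm reg}$ has completed local ring $K[[z_1,\dots,z_{s-1}]]$, since the base is smooth of dimension $s-1$. The factor $A_{a,e}$ is Artinian local and already complete. Since the completion of a product over $K$ at a point is the completed tensor product, this gives $A_{a,e}[[z]]$.

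\textbf{Transfer to $X_I^{(s)}$.} For a dominant profile I would reuse the argument in the proof of Theorem~\ref{thm:dominant-profile-components}. After inverting a nonzero $(s-1)$-minor, the incidence and the determinantal scheme are isomorphic scheme-theoretically, because the final relation coordinate is normalized and the others eliminated. On the monic chart the incidence chart is $\cF_{I,r}$, via Proposition~\ref{prop:incidence-factorization-chart}. I would restrict to the open where this minor is invertible, intersected with the preimage of $U_I^{\rm reg}$ and with the piece $\cF_\pi$. Since $\cF_\pi$ is open-and-closed over $U_I^{\rm reg}$, this gives an open subscheme of $\cF_\pi$ mapping isomorphically onto an open $V\subset X_I^{(s)}$. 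The open $V$ is dense in $Y_\pi$ because the minor is nonzero at the generic point, by the rank formula \eqref{eq:profile-rank}. I expect this to be the main obstacle: one must check that the chosen open of the determinantal scheme contains no points of other profiles' images, so that $V$ really has support inside $Y_\pi$. This is handled by shrinking away the images of the other components, using distinctness of images. Completing then gives \eqref{eq:profile-image-completion}.

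\textbf{Local invariants.} $H^*(\Gr(e,a))$ is a graded Artinian complete intersection (a Poincaré duality algebra). It has embedding dimension $\min(e,a-e)$: the relations eliminate the $f_j$ in terms of the $e_i$ or conversely, leaving $\min(e,a-e)$ generators and as many relations. Hence the tangent dimension is $(s-1)+p_0$. The multiplicity is the length $\binom ae$, because a power series extension of an Artinian ring has multiplicity equal to its length. The ring is Gorenstein, and it is reduced exactly when $A_{a,e}=K$, that is, when $e\in\{0,a\}$.
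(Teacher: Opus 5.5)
Your proposal is correct and follows essentially the paper's proof. The coprime $e$-splitting \eqref{eq:factorization-e-splitting} gives the product and the connected \'etale factor $E_\pi$, completion gives $A_{a,e}[[z_1,\ldots,z_{s-1}]]$, and the $(s-1)$-minor elimination from Theorem~\ref{thm:dominant-profile-components} transfers this to $X_I^{(s)}$. The obstacle you flag there is in fact automatic: that elimination is an isomorphism on the whole minor-invertible open, so the image of the open piece of $\cF_\pi$ is already an open subscheme of $X_I^{(s)}$ supported in $Y_\pi$.

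There are two small differences. You read off the Borel presentation of $A_{a,e}$ directly from the coefficient equations of $gh=x^a$, instead of citing the zero-fiber computation \eqref{eq:cohomology-fiber}; this is a valid and more elementary route. For the embedding dimension, you should add the paper's one-line weight check. After elimination, the $p_0$ surviving generators have weights at most $p_0$, while the $p_0$ relations have weights exceeding $\max(e,a-e)\ge p_0$. Hence the relations lie in $\mathfrak m^2$, and the count $p_0$ is exact rather than only an upper bound.
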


\begin{proof}
\emph{Full piece and dominant chart.}
Coprime decomposition \eqref{eq:factorization-e-splitting} and
Theorem~\ref{thm:dominant-profile-components} give $E_\pi$ and the product.
The integral zero-fiber calculation \eqref{eq:cohomology-fiber}
gives \eqref{eq:zero-factor-algebra} \cite{LaksovThorup}:
for $e=1$ it is $K[u]/(u^a)$, and for $e=0,a$ it is $K$,
including $a=0$.  Since $E_\pi$ is \'etale over a smooth base of
dimension $s-1$, completion gives \eqref{eq:profile-formal-completion}.
For dominant profiles, rank-$(s-1)$ minor elimination identifies the
full incidence with the determinantal scheme.  Remove other image
components and restrict the recovered relation to $U_I^{\rm reg}$;
this gives $V$ and \eqref{eq:profile-image-completion}, retaining nilpotents.

\emph{Local invariants.}
Put $q_0=\max(e,a-e)$.  Exchanging the two factors when necessary,
the same zero-fiber presentation gives, for $p_0>0$,
\[
 A_{a,e}\simeq K[c_1,\ldots,c_{p_0}]/(h_{q_0+1},\ldots,h_{q_0+p_0}).
\]
This Artin complete intersection has length $\binom ae$.
Variable weights are at most $p_0\le q_0$, whereas relation weights
exceed $q_0$; thus there are no linear terms and the embedding dimension
is $p_0$.  Adjoining the formal parameters preserves its length as
Hilbert--Samuel multiplicity.  It is $K$ exactly for $e=0,a$;
otherwise it is nonreduced.  Completion detects regularity and the
complete-intersection property, which implies Gorensteinness.
\end{proof}

Equation~\eqref{eq:profile-image-completion} describes the full
$X_I^{(s)}$.  The finer problem of recovering the reduced image algebra across
the boundary is beyond the scope of the present paper.

\subsection{Occupancy counts and bound-attaining rank loci}
\label{r26:formal:deep}

Rotation cycle lengths give the binary-necklace count
\begin{equation}\label{eq:necklace-number}
 a_{d,j}=\frac1d\sum_{d_0\mid\gcd(d,j)}\varphi(d_0)
          \binom{d/d_0}{j/d_0}
\end{equation}
for length $d$ and weight $j$.  Taking multisets of these necklace types gives
\begin{equation}\label{eq:profile-generating-function}
 P_d(u,z,v)=\frac1{1-u}\prod_{j=1}^d(1-uvz^j)^{-a_{d,j}}.
\end{equation}
Here $u$ counts blocks, $z$ selected nonzero roots, and $v$ occupied
blocks; the empty necklace contributes $(1-u)^{-1}$.
By Theorem~\ref{thm:dominant-profile-components}, the number of reduced
image components with fixed split $e$ is
\begin{equation}\label{eq:component-count}
 \sum_{\ell\ge s-1}[u^mz^{r-e}v^\ell]P_d(u,z,v),
\end{equation}
and summing over admissible $e$ counts all components, without Artin
length weights.  For $1\le\delta\le s$, define the deeper determinantal
locus and its reduction
\begin{equation}\label{eq:deep-rank-locus}
 X_I^{[\delta]}=V(I_{s-\delta+1}(H_I)),\qquad
 X_{I,\mathrm{red}}^{[\delta]}=(X_I^{[\delta]})_{\mathrm{red}}
 =\{g:\rank H_I(g)\le s-\delta\}_{\mathrm{red}}.
\end{equation}
Thus $X_{I,\mathrm{red}}^{[1]}=(X_I^{(s)})_{\mathrm{red}}$.

\begin{theorem}[Bound-attaining deep-rank components]
\label{thm:deep-rank-profile-components}
Retain the characteristic-zero hypotheses and notation of
\eqref{eq:general-support-normalization}--\eqref{eq:profile-occupancy}.
For $1\le\delta\le s$,
\begin{equation}\label{eq:deep-rank-dimension-bound}
 \dim X_{I,\mathrm{red}}^{[\delta]}\le s-\delta.
\end{equation}
For $2\le\delta\le s$, let $\Pi_\delta$ be the set of admissible profiles
$\pi=(e,\boldsymbol m)$ with
\begin{equation}\label{eq:deep-profile-occupancy}
 \ell(\boldsymbol m)=s-\delta.
\end{equation}
Taking the closure of the reduced profile image gives a bijection
\begin{equation}\label{eq:deep-profile-bijection}
 \Pi_\delta\ \longleftrightarrow
 \left\{Y\in\Irr(X_{I,\mathrm{red}}^{[\delta]}):
                 \dim Y=s-\delta\right\}.
\end{equation}
Every such component has generic rank exactly $s-\delta$, and the projective relation fiber
over its generic point is $\PP^{\delta-1}$.  In particular, the number of components attaining
the universal bound is
\begin{equation}\label{eq:deep-component-count}
 \sum_{e=\max\{0,r-M\}}^{\min\{a,r\}}
 [u^mz^{r-e}v^{s-\delta}]P_d(u,z,v).
\end{equation}
\end{theorem}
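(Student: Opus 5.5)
The plan is to work with the relation incidence $Z_{I,K}\to X_{I,K}^{(s)}$, or its projective version $\overline{\mathfrak Z}_I$, and to stratify the image by the rank of $H_I$.

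\emph{Step 1: the dimension bound.} Over the locally closed rank-$(s-\delta')$ stratum, with $\delta'\ge\delta$, the projective relation fibre is $\PP(\ker H_I(g))\simeq\PP^{\delta'-1}$. Its inverse image therefore has dimension equal to the dimension of the stratum plus $\delta'-1$. This inverse image lies in the incidence, which is pure of dimension $s-1$ by Theorem~\ref{thm:projective-factorization-CI}, or is quasi-finite over $\PP^{s-1}$ on the monic chart. Hence the stratum has dimension at most $s-\delta'\le s-\delta$. Since $X^{[\delta]}_{I,\mathrm{red}}$ is the union of these strata, \eqref{eq:deep-rank-dimension-bound} follows.

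\emph{Step 2: components attaining the bound come from incidence components.} Let $Y$ be an irreducible component of $X^{[\delta]}_{I,\mathrm{red}}$ with $\dim Y=s-\delta$, and let $k$ be its generic rank. Step~1 forces $k=s-\delta$, since a smaller rank would give $\dim Y\le k<s-\delta$. Over a dense open of $Y$ the relation fibres form a $\PP^{\delta-1}$-bundle, as in Lemma~\ref{lem:profile-rank-image}; its total space has dimension $s-1$. This total space is therefore a dense open of an irreducible component $W$ of the incidence, and $p(W)=Y$. The open meets the monic chart, because $X^{[\delta]}_I$ is defined on the affine chart; by Lemma~\ref{lem:profile-boundary-exclusion} it also avoids the boundary loci, whose images have dimension $\le s-2$ and which are covered by the later dimension count. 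So $W=W_\pi$ for an admissible profile $\pi$ by Theorem~\ref{thm:dominant-profile-components}.

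\emph{Step 3: identify $\pi$ and prove the bijection.} Theorem~\ref{thm:dominant-profile-components} gives $k_\pi=\min\{\ell(\boldsymbol m),s-1\}$. Since $\delta\ge2$, the equality $k_\pi=s-\delta<s-1$ forces $\ell(\boldsymbol m)=s-\delta$, so $\pi\in\Pi_\delta$. Conversely, take $\pi\in\Pi_\delta$. Then $\overline Y_\pi\cap\A^r$ has dimension $s-\delta$ and generic rank $s-\delta$, so it lies in $X^{[\delta]}_I$. By Step~1 it cannot be contained in a larger irreducible closed subset of $X^{[\delta]}_{I,\mathrm{red}}$ without exceeding the bound $s-\delta$, so it is a component. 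The lemma also gives the generic fibre $\PP^{\delta-1}$. Injectivity of $\pi\mapsto Y$ is the statement ``distinct profiles have distinct images'' in Theorem~\ref{thm:dominant-profile-components}, and Step~2 gives surjectivity.

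\emph{Step 4: the count.} The count \eqref{eq:deep-component-count} is the number of profiles with $m$ blocks, $r-e$ selected nonzero roots and occupancy $s-\delta$. This is the coefficient extraction from $P_d$, in the same way as \eqref{eq:component-count}, summed over the admissible range \eqref{eq:e-range-all}.

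The main obstacle is Step~2, namely ensuring that a maximal-dimensional component of the deep locus is generically covered by a genuine incidence component rather than by part of one. The projective-bundle argument of Lemma~\ref{lem:profile-rank-image} handles this: the bundle over the open of $Y$ is irreducible of full dimension $s-1$, so it cannot be a proper subset of a component. One also has to be careful that $Y$ meets the monic chart, which holds because $X^{[\delta]}_I$ is affine by definition.
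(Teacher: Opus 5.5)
Your proposal is correct and follows essentially the same route as the paper: the bound comes from the projective kernel bundle over a constant-rank open together with purity of the $(s-1)$-dimensional incidence. A bound-attaining $Y$ forces generic rank $s-\delta$, so the bundle closure is an incidence component and hence some $W_\pi$ with $\ell(\boldsymbol m)=s-\delta$. The converse is maximality under the bound, and the count is coefficient extraction.

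The only blemish is the boundary remark in Step~2. It is unnecessary, since Theorem~\ref{thm:dominant-profile-components} already identifies every component of $\overline{\mathfrak Z}_I$ with some $W_\pi$. It is also not literally true as stated: for $\delta\ge2$ every fibre $\PP^{\delta-1}$ meets the hyperplane $c_N=0$, so only the generic point of the bundle avoids the boundary.
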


\begin{proof}
\emph{The dimension bound.}
For an irreducible closed $Y\subseteq X_{I,\mathrm{red}}^{[\delta]}$,
choose a dense constant-rank open $U$ of its reduced projective closure,
with rank $k\le s-\delta$.  Lemma~\ref{lem:profile-rank-image} makes
$p^{-1}(U)$ an integral projective bundle of relative dimension $s-k-1$.
Incidence purity (Theorem~\ref{thm:projective-factorization-CI}) gives
\begin{equation*}
 \dim Y+s-k-1\le s-1,\qquad \dim Y\le k\le s-\delta.
\end{equation*}
This proves \eqref{eq:deep-rank-dimension-bound}.

\emph{Attaining the bound.}
If $2\le\delta\le s$ and $\dim Y=s-\delta$, then $k=s-\delta$.
The reduced closure $W$ of $p^{-1}(U)$ is irreducible of dimension $s-1$,
hence is a component of $(\overline{\mathfrak Z}_I)_{\rm red}$.
Properness gives $p(W)=\overline Y$.
Theorem~\ref{thm:dominant-profile-components} now identifies $W=W_\pi$
for a unique admissible profile, with
\begin{equation*}
 \min\{\ell(\boldsymbol m),s-1\}=s-\delta\le s-2,
\end{equation*}
so $\ell(\boldsymbol m)=s-\delta$.

Conversely, occupancy $s-\delta$ gives image dimension $s-\delta$.
Its closed, nonempty monic chart lies in the deep-rank locus, hence is
a component by the bound.  Distinct profiles give distinct projective
images and monic charts, proving \eqref{eq:deep-profile-bijection}.
The generic relation fiber is $\PP^{\delta-1}$; coefficient extraction
in \eqref{eq:profile-generating-function} gives the count.
\end{proof}

\begin{remark}[Scope of the deep-rank classification]
\label{rem:bound-attaining-essential}
Only components of dimension $s-\delta$ are classified.
Even when \eqref{eq:deep-component-count} vanishes, secondary degeneracy
inside profile branches may give nonempty residual components.
Their union has dimension at most $s-\delta-1$; no classification
of them is asserted.
\end{remark}

\Needspace{10\baselineskip}
\subsection{Primitive specializations and a contracted example}
\label{r26:formal:specializations}

\begin{corollary}[Primitive supports and their mixed multidegrees]
\label{cor:primitive-components}
Assume that $K$ is algebraically closed of characteristic zero and that
\eqref{eq:general-support-normalization} has $d=1$.  Put $M=N-a$ and
$q_e=r-e$.  For every integer $e$ in the admissible range
$\max\{0,r-M\}\le e\le\min\{a,r\}$, let $W_e$ be the reduced
profile incidence component and $\overline Y_e$ its coefficient-space
image.  Multiplication by the fixed powers of $X$ identifies $W_e$ with
the integral incidence
\begin{equation}\label{eq:primitive-incidence-model}
 \begin{split}
 Z_e'&=\{([G'],[H'])\in\PP^{q_e}\times\PP^{M-q_e}:
              G'H'\in\PP(V_{B^{\rm prim}})\},\\
 ([G'],[H'])&\longmapsto([X^eG'],[X^{a-e}H']).
 \end{split}
\end{equation}
Here $V_{B^{\rm prim}}$ consists of binary forms of degree $M$.
Writing $H_G,H_H$ for the two hyperplane classes on the smaller
product, one has $[Z_e']=(H_G+H_H)^{M-s+1}$ and
\begin{equation}\label{eq:primitive-mixed-degrees}
 \delta_{e,j}=\binom{M-j}{q_e-j}
 \quad(0\le j\le\min\{q_e,s-1\}),\qquad
 \delta_{e,j}=0\quad(q_e<j\le s-1).
\end{equation}
The dominant images, which are exactly the irreducible components of
$(\overline X_I^{(s)})_{\rm red}$, are indexed by
\begin{equation}\label{eq:primitive-e-range}
 \max\{0,r-M\}\le e\le\min\{a,r-s+1\}.
\end{equation}
For these indices the recovered-relation rational map $\rho_e$ satisfies
\begin{equation}\label{eq:primitive-projective-degrees}
 \deg\overline Y_e=\binom{M-s+1}{q_e-s+1},\qquad
 \deg\rho_e=\binom M{q_e},\qquad
 \operatorname{mult}_{\zeta_{\overline Y_e}}\overline X_I^{(s)}
       =\binom ae.
\end{equation}
If $q_e\le s-1$, then
$\overline Y_e=\PP(X^e\operatorname{Sym}^{q_e}K^2)$ is a linear
space of dimension $q_e$ and degree one.  In particular, if $a=0$,
then $X_I^{(s)}$ and $\overline X_I^{(s)}$ are integral and
$\deg\rho_0=\binom Nr$.
\end{corollary}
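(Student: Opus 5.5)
The plan is to specialize the general profile theory to $d=1$ and read off each assertion from results already stated.

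\emph{Profiles and the incidence model.} When $d=1$ every necklace has length one, so it is either empty ($w=0$) or full ($w=1$), with $o(\nu)=h_\nu=1$. The constraints \eqref{eq:profile-constraints} then force $m_{\rm full}=q_e$ and $m_{\rm empty}=M-q_e$. Hence there is exactly one profile for each admissible $e$. Its orbit degree is $D=\binom M{q_e}$ by \eqref{eq:profile-orbit-degree}, and its occupancy is $\ell=q_e$. Corollary~\ref{thm:profile-multidegrees} already says that $Z'_e$ is a reduced complete intersection whose components map isomorphically onto the $W_\pi$ with $e(\pi)=e$. Since only one such $\pi$ exists, $Z'_e$ must be irreducible, so $\iota_e$ identifies $Z'_e$ with $W_e$, and $[Z'_e]=(H_G+H_H)^{M-s+1}$.

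\emph{Mixed degrees.} For $j\le\min\{q_e,s-1\}$, the fixed-$e$ conservation \eqref{eq:fixed-e-multidegree-conservation} has a single summand, which gives $\delta_{e,j}=\binom{M-j}{q_e-j}$. As a cross-check, Theorem~\ref{v6:degree:formula} with weights consisting of $q_e$ ones gives $\binom M{q_e}\binom{q_e}{j}/\binom Mj$, which is the same number. When $j>q_e$ the binomial vanishes, and $e_j$ of the weight list is zero as well.

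\emph{Dominant range and the three invariants.} By \eqref{eq:X-component-classification} a profile is dominant exactly when $\ell=q_e\ge s-1$, that is, when $e\le r-s+1$. Intersecting this with \eqref{eq:e-range-all} gives \eqref{eq:primitive-e-range}. For these $e$ I would then read off each invariant from an earlier result:
\begin{itemize}
\item the degree $\deg\overline Y_e=\delta_{e,s-1}=\binom{M-s+1}{q_e-s+1}$ from \eqref{v6:degree:image-degree};
\item $\deg\rho_e=D=\binom M{q_e}$ from the last statement of Corollary~\ref{thm:profile-multidegrees};
\item the generic multiplicity $\binom ae$ from Theorem~\ref{thm:dominant-profile-components}.
\end{itemize}

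\emph{The case $q_e\le s-1$.} Here the image has dimension $k=q_e$, and it is contained in $\PP(X^e\operatorname{Sym}^{q_e}K^2)$, because every selected factor is $X^e$ times a degree-$q_e$ form. That linear space is irreducible of dimension $q_e$. Since the image is closed and of the same dimension, the two coincide, and the degree is one.

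\emph{The case $a=0$.} Then $e=0$ is forced, so the reduced scheme is irreducible. Reducedness comes from Corollary~\ref{cor:geometric-reducedness}, so $X_I^{(s)}$ and $\overline X_I^{(s)}$ are integral, and $\deg\rho_0=\binom M r=\binom Nr$ because $M=N$.

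\emph{Expected obstacle.} The delicate point is the identification of $W_e$ with $Z'_e$ as schemes, and not merely birationally. This rests entirely on the reducedness and component statement for $Z'_e$ in Corollary~\ref{thm:profile-multidegrees}. Once uniqueness of the profile is in hand, irreducibility follows; everything else is bookkeeping.
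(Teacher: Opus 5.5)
Your proposal is correct and follows the paper's proof essentially step for step: there is exactly one $d=1$ profile for each $e$; Corollary~\ref{thm:profile-multidegrees} identifies $Z'_e$ with $W_e$; the fixed-$e$ conservation sum is read as the individual mixed-degree vector; the occupancy criterion gives the dominant range and the three invariants; and Corollary~\ref{cor:geometric-reducedness} handles $a=0$. The only deviation is the linear-space case $q_e\le s-1$: the paper shows directly that every $[G']$ divides a nonzero member of $V_{B^{\rm prim}}$, since the multiples of $G'$ have codimension $q_e<s$, whereas you combine the already-established image dimension $k_\pi=q_e$ with containment in $\PP(X^e\operatorname{Sym}^{q_e}K^2)$, and both arguments are valid.
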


\begin{proof}
For $d=1$ each admissible $e$ has one profile of occupancy $q_e$.
Corollary~\ref{thm:profile-multidegrees} identifies the integral $Z'_e$
with $W_e$, including $q_e=0,M$.  The fixed-$e$ conservation sum
\eqref{eq:fixed-e-multidegree-conservation} is therefore its individual
vector.  The occupancy criterion gives \eqref{eq:primitive-e-range}
and the separate image, relation, and multiplicity formulas.

For $q_e<s$, the multiple space
$G'\operatorname{Sym}^{M-q_e}K^2$ has codimension $q_e$, so it
intersects the $s$-dimensional $V_{B^{\rm prim}}$ nontrivially.
Every $[G']$ occurs, giving the full linear image.
For $a=0$, use Corollary~\ref{cor:geometric-reducedness} and $e=0$.
\end{proof}

\begin{example}[Orbit degree, contraction, and image degree]
\label{ex:occupancy-filter}
Take $I=\{0,3,6\}$ and $r=s=3$, so $a=0$, $d=3$, and $m=2$.
The two degree-three selection profiles are $\{000,111\}$ and
$\{100,110\}$.  The first has relation degree $D=2$, occupancy $k=1$,
and reduced projective incidence
\[
 \PP^1\times\PP^1\longrightarrow\PP^3\times\PP^3,
 \qquad
 ([u_0:u_1],[v_0:v_1])\longmapsto
 ([u_0X^3+u_1Y^3],[v_0X^3+v_1Y^3]).
\]
If $U,V$ denote its hyperplane classes, then $\eta=U$,
$\xi=U+V$, and $U^2=V^2=0$, $\int UV=1$.  Hence its mixed vector is
\[
 (\delta_0,\delta_1,\delta_2)
 =\left(\int(U+V)^2,\int U(U+V),\int U^2\right)=(2,1,0).
\]
Its image is the line $A_1=A_2=0$ of cubic forms spanned by $X^3,Y^3$;
the generic relation fiber is $\PP^1$.  This is the unique component of
$X_{I,\mathrm{red}}^{[2]}$ attaining the bound $s-2=1$:
it is the only profile of occupancy one, and
Theorem~\ref{thm:deep-rank-profile-components} applies.  On the monic chart, fixing $G=x^3-z$
and varying $H=x^3-w$ gives
$GH=x^6-(z+w)x^3+zw$ on the same support.  This also exhibits a positive-dimensional affine fiber.

The other profile, $\{100,110\}$, has $D=2\cdot3\cdot3=18$ and
occupancy $k=2$, so its projection is birational onto the unique surface
component.  Since $a=0$, both incidence cycle multiplicities are one.
Corollary~\ref{thm:profile-multidegrees} gives the total vector
\[
 \left(\binom63,\binom52,\binom41\right)=(20,10,4).
\]
Subtracting the directly computed contracted vector yields
\begin{equation}\label{eq:occupancy-example-multidegrees}
 (2,1,0)+(18,9,4)=(20,10,4).
\end{equation}
In particular the dominant image is a quartic surface, whereas its
recovered-relation rational map has degree eighteen.

These degrees agree with the explicit equation.  On the monic chart
$g=x^3+A_2x^2+A_1x+A_0$, reducing $1,x^3,x^6$ modulo $g$ gives
\begin{equation}\label{eq:occupancy-example-equation}
 A_1^2(A_2^2-A_1)-A_2^3A_0=0.
\end{equation}
In homogeneous coefficient coordinates
$G=A_3X^3+A_2X^2Y+A_1XY^2+A_0Y^3$, its closure is
\[
 A_1^2A_2^2-A_1^3A_3-A_2^3A_0=0.
\]
This polynomial is primitive and linear in $A_0$ over
$K[A_1,A_2,A_3]$, since its leading and constant coefficients are
coprime; hence it is irreducible.  The contracted line lies on this
quartic and is not an additional component.  Thus the two relation
branches have degrees $2,18$, while their images are a line and the
single quartic component.
\end{example}

\section{Auxiliary factorization and zero-fiber inputs}\label{app:auxiliary-inputs}
For $2\le s\le r<N$, put
\begin{equation}\label{eq:canonical-I}
 I_{s,N}=\{0,1,\ldots,s-2,N\},\qquad X_{s,N}=X^{(s)}_{I_{s,N}}.
\end{equation}
\subsection{Canonical hook ideal}

Put $m=N-r+1$ and $h=r-s+1$.  For the support \eqref{eq:canonical-I}, the reduced hook matrix
of Proposition~\ref{prop:hook-reduction} is a single column: its rows are indexed by
$b=s-1,\ldots,r-1$.

\begin{proposition}[Canonical hook ideal]\label{prop:canonical-hook-ideal}
There are integral scheme-theoretic equalities
\begin{equation}\label{eq:canonical-hook-h}
 \begin{split}
 J_{I_{s,N}}^{(s)}
 &=\bigl(S_{(m)},S_{(m,1)},\ldots,S_{(m,1^{h-1})}\bigr)\\
 &=(h_m,h_{m+1},\ldots,h_{m+h-1})
  =(h_{N-r+1},\ldots,h_{N-s+1}).
 \end{split}
\end{equation}
In particular, $X_{s,N}$ is a complete intersection over $\mathbb Z$ of codimension $h$ in
the coefficient affine space.
\end{proposition}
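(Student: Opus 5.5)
The plan is to apply Proposition~\ref{prop:hook-reduction} to the support $I=I_{s,N}$. Here $A=\{0,\ldots,s-2\}$ consists of standard columns, so $a_0=s-1$. Since $N>r$, we get $U=\{N\}$ and $u_I=1$. The missing-low-row set is $B=\{s-1,\ldots,r-1\}$, with $|B|=r-s+1=h$. By \eqref{eq:ideal-reduction}, $J_I^{(s)}=I_1(M_I)$ is the ideal generated by the $h$ entries of the single column $M_I$. The same proposition gives these entries integrally as
\[
 (M_I)_{b,N}=(-1)^{r-1-b}S_{(N-r+1,1^{r-1-b})},\qquad s-1\le b\le r-1 .
\]
As $b$ runs over $B$, the exponent $r-1-b$ runs over $0,\ldots,h-1$. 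Up to unit signs, these are therefore the hooks $S_{(m)},S_{(m,1)},\ldots,S_{(m,1^{h-1})}$ with $m=N-r+1$. This proves the first equality over $\mathbb Z$.

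For the second equality I will use the Jacobi--Trudi expansion already quoted in the proof of Lemma~\ref{lem:hook-remainder}:
\[
 s_{(m,1^j)}=\sum_{v=0}^{j}(-1)^v h_{m+v}e_{j-v}.
\]
Pulling back through Vieta's isomorphism \eqref{eq:vieta}, $S_{(m,1^j)}$ equals $(-1)^jh_{m+j}$ plus an $R_r$-linear combination of $h_m,\ldots,h_{m+j-1}$. Here $e_0=1$ and each $e_k$ is $\pm A_{r-k}\in R_r$. The transition matrix from $(h_m,\ldots,h_{m+h-1})$ to $(S_{(m)},\ldots,S_{(m,1^{h-1})})$ is thus triangular with unit diagonal over $\mathbb Z[A]$, so the two generating sets span the same ideal integrally. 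The last equality is reindexing: $m+h-1=N-s+1$.

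For the complete-intersection claim, Theorem~\ref{thm:expected-codimension} (the case $\max I=N\ge r$) gives height exactly $h$ over every field. The generic fiber is included, so $J$ has grade $h$ in the regular ring $R_r$ and its $h$ generators form a regular sequence in each local ring. This is exactly the $u_I=1$ case of \eqref{eq:CI-classification} in Theorem~\ref{thm:betti}. Flatness over $\mathbb Z$ comes from Theorem~\ref{thm:CM-flat}.

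The only step needing care is the sign and indexing bookkeeping in the hook entries, together with checking that the triangular transition has unit diagonal over $\mathbb Z$, not merely over $\mathbb Q$. Since the diagonal coefficient is $(-1)^j$, I expect this to go through.
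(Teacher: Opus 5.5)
Your proposal is correct and matches the paper's proof: it uses hook reduction with $U=\{N\}$ (Proposition~\ref{prop:hook-reduction}) for the first equality, the unitriangular Jacobi--Trudi change of generators over $\mathbb Z$ for the second, and height $h$ in the regular ring $R_r$ (Theorems~\ref{thm:expected-codimension} and \ref{thm:CM-flat}) to conclude that the $h$ generators form a regular sequence. Your additional appeal to the $u_I=1$ case of Theorem~\ref{thm:betti} is consistent with this argument but not needed.
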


\begin{proof}
Hook reduction gives the first equality.  Jacobi--Trudi gives
\begin{equation}\label{eq:hook-triangular}
 S_{(m,1^j)}=\sum_{i=0}^j(-1)^i e_{j-i}h_{m+i}\qquad(0\le j<h).
\end{equation}
The signed change is unitriangular over $\mathbb Z$.
Theorems~\ref{thm:expected-codimension} and \ref{thm:CM-flat} give
height $h$ in regular $R_r$, so these $h$ generators form a regular sequence.
\end{proof}

Section~\ref{sec:monodromy} separately addresses geometric irreducibility.

\subsection{Sparse factorization cover and the integral Schur basis}
\label{r25:factorization:monic-basis}

Lemma~\ref{lem:binary-multiplication} gives finite flat Gorenstein
factorization charts after arbitrary coefficient base change.
The additional integral basis theorem gives
$\{S_\lambda:\lambda\subseteq(N-r)^r\}$ for every monic degree-$N$
polynomial over any commutative ring, $1\le r<N$
\cite[Theorem~0.6 and Corollary~0.7]{LaksovThorup}.
For the canonical slice put
\begin{equation}\label{eq:sparse-base-polynomial}
 B_{s,N}=\mathbb Z[t_0,\ldots,t_{s-2}],\qquad
 F_{s,N}(x)=x^N-\sum_{j=0}^{s-2}t_jx^j.
\end{equation}
The factorization algebra $\Fact_r(F_{s,N}/B_{s,N})$ represents factorizations
$F_{s,N}=GH$ in which $G$ is monic of degree $r$ and $H$ is monic of degree $N-r$.

\begin{theorem}[Sparse factorization cover]\label{thm:canonical-factorization}\label{cor:canonical-Hilbert}
The homomorphism
\begin{equation}\label{eq:sparse-base-map}
 B_{s,N}\longrightarrow R_r/J_{I_{s,N}}^{(s)},
 \qquad t_j\longmapsto q_{j,N},
\end{equation}
induces a natural isomorphism
\begin{equation}\label{eq:factorization-isomorphism}
 R_r/J_{I_{s,N}}^{(s)}\simeq\Fact_r(F_{s,N}/B_{s,N}).
\end{equation}
As a $B_{s,N}$-module this algebra is free with basis
\begin{equation}\label{eq:factorization-Schur-basis}
 \{S_\lambda:\lambda\subseteq(N-r)^r\}.
\end{equation}
Consequently, the structural morphism
\begin{equation}\label{eq:canonical-pi}
 \pi_{s,N}:X_{s,N}\longrightarrow\A_{\mathbb Z}^{s-1}
\end{equation}
is finite flat of degree $\binom Nr$; it is the restriction of the incidence
projection $q$ to the monic relation chart.  Its root-weighted Hilbert series is

\begin{equation}\label{eq:canonical-Hilbert}
 \Hilb_{\mathcal O(X_{s,N})}(z)
 =\frac{\genfrac{[}{]}{0pt}{}{N}{r}_z}
 {\prod_{j=0}^{s-2}(1-z^{N-j})}
 =\frac{\prod_{j=N-r+1}^{N-s+1}(1-z^j)}
 {\prod_{j=1}^r(1-z^j)}.
\end{equation}
Here $\genfrac{[}{]}{0pt}{}{N}{r}_z$ is the Gaussian binomial coefficient.
The Hilbert series is computed using the $\mathbb Z$-ranks
of the root-weighted homogeneous components; equivalently,
it is the Hilbert series after base change to any field.
\end{theorem}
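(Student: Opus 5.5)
The approach is to realize the factorization algebra as a quotient of $R_r$ via the universal factor $G$, match its defining ideal with $J_{I_{s,N}}^{(s)}$, and then import freeness and the Schur basis from Laksov--Thorup.

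\emph{Step 1: the universal factorization algebra.} $\Fact_r(F_{s,N}/B_{s,N})$ represents pairs $(t,G)$ with $G$ monic of degree $r$ dividing $F_{s,N}=x^N-\sum t_jx^j$; the quotient $H$ is then unique by monic division. Monic division by the universal $G$ over $R_r[t]$ gives $x^N-\sum_j t_jx^j\equiv\sum_b\bigl(q_{b,N}-\sum_j t_jq_{b,j}\bigr)x^b$. Since $Q_j=e_j$ for $j\le s-2<r$, divisibility is the system
\[
 t_b=q_{b,N}\quad(0\le b\le s-2),\qquad q_{b,N}=0\quad(s-1\le b\le r-1).
\]
Hence
\[
 \Fact_r(F_{s,N}/B_{s,N})\simeq R_r[t]/\bigl(t_b-q_{b,N},\,q_{b',N}\bigr)\simeq R_r/(q_{s-1,N},\ldots,q_{r-1,N}).
\]
The structure map is $t_j\mapsto q_{j,N}$, which is \eqref{eq:sparse-base-map}.

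\emph{Step 2: matching the ideal.} By Proposition~\ref{prop:hook-reduction}, the reduced matrix $M_I$ is the single column $(q_{b,N})_{b\in B}$ with $B=\{s-1,\ldots,r-1\}$. So $J^{(s)}_{I_{s,N}}=I_1(M_I)$ is exactly the ideal found in Step 1, and \eqref{eq:factorization-isomorphism} follows. It is natural because both sides represent the same functor of monic divisors.

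\emph{Step 3: freeness, degree, and identification with $q$.} Freeness with Schur basis $\{S_\lambda:\lambda\subseteq(N-r)^r\}$ over $B_{s,N}$ is quoted from \cite[Theorem~0.6, Corollary~0.7]{LaksovThorup}, applied to the monic polynomial $F_{s,N}$ over $B_{s,N}$. The count of partitions in the box gives rank $\binom Nr$, so $\pi_{s,N}$ is finite flat of that degree. The identification with $q$ on the monic relation chart is Proposition~\ref{prop:incidence-factorization-chart}: the sign change $t_i\mapsto -t_i$ matches the chart $c_N=1$, and $G$ monic recovers the quotient $H=F/G$.

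\emph{Step 4: Hilbert series.} Grade $B_{s,N}$ by $\rwt(t_j)=N-j$. This is consistent because $q_{j,N}$ has root weight $N-j$ by \eqref{eq:entry-weight}, so the map is homogeneous. The Schur basis element $S_\lambda$ has weight $|\lambda|$, and the generating function of partitions in an $r\times(N-r)$ box is the Gaussian binomial $\genfrac{[}{]}{0pt}{}{N}{r}_z$. Graded freeness then gives the first expression in \eqref{eq:canonical-Hilbert}. For the second, expand the Gaussian binomial as $\prod_{j=N-r+1}^N(1-z^j)/\prod_{j=1}^r(1-z^j)$ and cancel the factors $j=N-s+2,\ldots,N$ against the denominator $\prod_{j=0}^{s-2}(1-z^{N-j})$. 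The result is base-change invariant since the module is free.

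The main obstacle is grading compatibility in Step 4. One must check that the Laksov--Thorup basis consists of homogeneous elements for the root-weight grading, so that the module is graded-free and not merely free. I would argue this from homogeneity of $S_\lambda$. If that fails, I would fall back on the complete-intersection description of Proposition~\ref{prop:canonical-hook-ideal}. The generators $h_{N-r+1},\ldots,h_{N-s+1}$ form a regular sequence of the indicated weights, and computing directly in $R_r$ gives the second formula of \eqref{eq:canonical-Hilbert} independently.
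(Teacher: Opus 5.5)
Your proposal is correct and follows essentially the same route as the paper: eliminate the $t_j$ using the remainder of $F_{s,N}$ modulo the universal $G$, identify the leftover ideal $(q_{s-1,N},\ldots,q_{r-1,N})$ with $J^{(s)}_{I_{s,N}}$, import the Laksov--Thorup Schur basis, and read off the Hilbert series from the weights $N-j$ and $|\lambda|$, cancelling the last $s-1$ numerator factors. Your graded-freeness worry is settled exactly as you suggest, since each $S_\lambda$ is root-weight homogeneous of weight $|\lambda|$. For the identification with $q$ over $\mathbb Z$ you do not need the characteristic-zero Proposition~\ref{prop:incidence-factorization-chart}, because your Step~1 already shows the structure map is $(G,H)\mapsto GH=F_{s,N}$.
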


\begin{proof}
Modulo the universal $G$, the remainder of $F_{s,N}$ is
\begin{equation*}
 \sum_{j=0}^{s-2}(q_{j,N}-t_j)x^j+
 \sum_{b=s-1}^{r-1}q_{b,N}x^b.
\end{equation*}
Thus the universal factorization equations eliminate the variables $t_j$ and leave precisely
$(q_{s-1,N},\ldots,q_{r-1,N})=J_{I_{s,N}}^{(s)}$.  This proves
\eqref{eq:factorization-isomorphism}, functorially over every coefficient base.
The integral basis recalled above specializes to
\eqref{eq:factorization-Schur-basis}; its rectangle has $\binom Nr$ partitions.
The base variable $t_j=q_{j,N}$ has root weight $N-j$, while the Schur basis element
$S_\lambda$ has weight $|\lambda|$.  The generating function of the basis is the Gaussian
binomial.  Cancelling its final $s-1$ numerator factors gives the complete-intersection form.
\end{proof}

In the monic-divisor incidence $Z_{I,K}$, independence of
$Q_0,\ldots,Q_{s-2}$ forces the relation's $x^N$ coefficient to be
invertible.  Normalizing it gives $t_j=q_{j,N}$ scheme-theoretically.
Thus $Z_{I,K}$ is this factorization chart; the binary projective
incidence $\overline{\mathfrak Z}_I$ also contains its boundary.

\subsection{Grassmannian zero fiber}

The formal profile model in Proposition~\ref{prop:formal-profile-model} uses the
following specialization of the canonical factorization cover.

\begin{proposition}[Grassmannian zero fiber]\label{prop:grassmannian-zero-fiber}
The zero fiber of \eqref{eq:canonical-pi} has coordinate ring
\begin{equation}\label{eq:cohomology-fiber}
 \mathcal O(\pi_{s,N}^{-1}(0))
 =\mathbb Z[e_1,\ldots,e_r]/(h_{N-r+1},\ldots,h_N)
 \simeq H^*(\Gr(r,\mathbb C^N),\mathbb Z),
\end{equation}
where $e_i=c_i(\mathcal S^\vee)$ and cohomological degree is twice root
weight.  After base change to a field, this is an Artin local scheme of length
$\binom Nr$ supported at $G=x^r$.
\end{proposition}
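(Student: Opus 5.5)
The plan is to specialize Theorem~\ref{thm:canonical-factorization} to the origin $t=0$ of $\A^{s-1}_{\mathbb Z}$. The zero fiber is then $\Fact_r(x^N/\mathbb Z)$: pairs of monic $G$ of degree $r$ and $H$ of degree $N-r$ with $GH=x^N$.

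\textbf{Step 1: compute the fiber ring as a quotient.} Writing $G=x^r+\sum A_jx^j$ with $A_{r-i}=(-1)^ie_i$, the relation $GH=x^N$ in $\mathbb Z[e][x]$ says the following. The power series $1/\bigl(1+A_{r-1}T+\cdots+A_0T^r\bigr)=\sum_m h_mT^m$ of \eqref{eq:h-generating}, truncated at degree $N-r$, gives the reversed coefficients of $H$. The product $GH$ equals $x^N$ exactly when the coefficients of the reversed product in degrees $N-r+1,\dots,N$ vanish.

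The cleanest route avoids this direct computation. At $t=0$, the fiber ring is $R_r/(J_{I_{s,N}}^{(s)}+(q_{0,N},\dots,q_{s-2,N}))=R_r/(q_{0,N},\dots,q_{r-1,N})$. By Lemma~\ref{lem:hook-remainder}, $q_{b,N}=\pm S_{(N-r+1,1^{r-1-b})}$. The unitriangular Jacobi--Trudi change \eqref{eq:hook-triangular}, applied now with $j$ ranging over $0,\dots,r-1$, converts these hooks into $(h_{N-r+1},\dots,h_N)$ over $\mathbb Z$. Via Vieta \eqref{eq:vieta}, this gives the first equality in \eqref{eq:cohomology-fiber}.

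\textbf{Step 2: identify the quotient with cohomology.} The Borel presentation $H^*(\Gr(r,\mathbb C^N),\mathbb Z)=\mathbb Z[c_1,\dots,c_r]/(h_{N-r+1},\dots,h_N)$ uses $c_i=c_i(\mathcal S^\vee)$. Here the $h_j$ are the complete symmetric polynomials in the Chern roots of $\mathcal S^\vee$, that is, $c_j(\mathcal Q)$ up to the standard sign convention for the Whitney relation $c(\mathcal S)c(\mathcal Q)=1$. The main bookkeeping obstacle is matching signs between $e_i$ and $c_i(\mathcal S^\vee)$ and between $h_j$ and $c_j(\mathcal Q)$. I would check this on the relation $c(\mathcal S^\vee)\cdot\sum h_j(-1)^j\ldots$ and, if needed, absorb a sign automorphism $e_i\mapsto(-1)^ie_i$, which preserves the ideal up to signs of the generators. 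The degree doubling is immediate, since $\rwt(e_i)=i$ while $c_i$ has cohomological degree $2i$.

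\textbf{Step 3: the field fibers.} Theorem~\ref{thm:canonical-factorization} makes $\pi_{s,N}$ finite flat of degree $\binom Nr$, so each field fiber has $K$-dimension $\binom Nr$. Set-theoretically, $GH=x^N$ forces $G=x^r$, which is $e=0$. Alternatively, positivity of the root-weight grading shows that the only maximal ideal is the irrelevant one $(e_1,\dots,e_r)$. Hence the fiber is Artin local of length $\binom Nr$, supported at $G=x^r$.
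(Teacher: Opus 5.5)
Your proposal is correct and is essentially the paper's proof. You adjoin $q_{0,N},\dots,q_{s-2,N}$ at the zero section to get the full remainder ideal $(q_{0,N},\dots,q_{r-1,N})$, convert it to $(h_{N-r+1},\dots,h_N)$ by Lemma~\ref{lem:hook-remainder} and the unitriangular Jacobi--Trudi change with leg length up to $r-1$, and apply the Borel presentation; the length $\binom Nr$ comes from the Schur basis (equivalently, finite flatness), and the support from $x^r$ being the only monic degree-$r$ divisor of $x^N$.

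The sign hedge in your Step~2 is unnecessary. If $x_1,\dots,x_r$ are the Chern roots of $\mathcal S^\vee$, then $c(\mathcal Q)=c(\mathcal S)^{-1}=\prod_i(1-x_i)^{-1}$, so $c_j(\mathcal Q)=h_j(x)$ exactly. This matches the convention $\sum_m h_mT^m=\prod_i(1-X_iT)^{-1}$ of \eqref{eq:h-generating} under Vieta.
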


\begin{proof}
The zero section adds $q_{0,N},\ldots,q_{s-2,N}$.  Lemma~\ref{lem:hook-remainder}
and the signed Jacobi--Trudi change in \eqref{eq:hook-triangular}, applied
through hook length $r-1$, turn the full remainder ideal into
$(h_{N-r+1},\ldots,h_N)$.  The Borel presentation gives
\eqref{eq:cohomology-fiber}; the Schur basis has $\binom Nr$ elements and
$x^r$ is the only monic degree-$r$ divisor of $x^N$ over an algebraically
closed field.  This gives the length and support.
\end{proof}

\section*{Funding}

This work was supported by the National Natural Science Foundation of China (grant numbers
12571003 and 12501006) and the Basic and Applied Basic Research Foundation of Guangdong
Province (grant number 2024A1515010589).

\section*{Declaration of competing interest}

The author declares that there are no known competing financial interests or personal
relationships that could have appeared to influence the work reported in this paper.

\section*{Data availability}

No data were used for the research described in this article.

\end{document}